\newtheorem{condition}[theorem]{Condition}
\theoremstyle{remark}
\def\Tcal{{\mathcal T}}
\def\Fcal{{\mathcal{F}}}
\def\Ccal{{\mathcal C}}
\def\Mcal{{\mathcal M}}
\def\Qcal{{\mathcal Q}}
\def\Ocal{{\mathcal O}}
\def\Ebb{{\mathbb E}}
\def\argmin{{\operatorname{argmin}}}
\def\romanone{{\expandafter{\romannumeral1}}}
\def\romantwo{{\expandafter{\romannumeral2}}}
\def\romanthree{{\expandafter{\romannumeral3}}}
\def\TV{{\operatorname{TV}}}
\def\loc{{\operatorname{loc}}}
\def\Bin{{\mathrm{Bin}}}
\def\d{{\operatorname{d}}}
\def\B{{\operatorname{B}}}
\title{Robust density estimation over star-shaped density classes}
\author{Xiaolong Liu and Matey Neykov\\
Department of Statistics and Data Science, Northwestern University\\ \texttt{xiaolongliu2025@u.northwestern.edu},~~~ \texttt{mneykov@northwestern.edu}}
\date{}
\begin{document}

\maketitle
\begin{abstract}
We establish a novel criterion for comparing the performance of two densities, \(g_1\) and \(g_2\), within the context of corrupted data. Utilizing this criterion, we propose an algorithm to construct a density estimator within a star-shaped density class, \(\mathcal{F}\), under conditions of data corruption. We proceed to derive the minimax upper and lower bounds for density estimation across this star-shaped density class, characterized by densities that are uniformly bounded above and below (in the sup norm), in the presence of adversarially corrupted data. Specifically, we assume that a fraction \(\epsilon \leq \frac{1}{3}\) of the \(N\) observations are arbitrarily corrupted. We obtain the minimax upper bound \(\max\{ \tau_{\Bar{J}}^2, \epsilon \} \wedge d^2\).
Under certain conditions, we obtain the minimax risk, up to proportionality constants, under the squared \(L_2\) loss as
\[
\max\left\{ \tau^{*2} \wedge d^2, \epsilon \wedge d^2 \right\},
\]
where \(\tau^* := \sup\left\{ \tau : N\tau^2 \leq \log \mathcal{M}_{\mathcal{F}}^{\text{loc}}(\tau, c) \right\}\) for a sufficiently large constant \(c\).
Here, \(\mathcal{M}_{\mathcal{F}}^{\text{loc}}(\tau, c)\) denotes the local entropy of the set \(\mathcal{F}\), and \(d\) is the \(L_2\) diameter of \(\mathcal{F}\).
\end{abstract}

\section{Introduction}
When dealing with minimax rates for density estimation, global metric entropy is often employed. Specifically, the following equation, sometimes informally referred to as the `Le Cam equation', is used to heuristically determine the minimax rate of convergence:
$$\log \Mcal_{\mathcal{F}}^{\operatorname{glo}}(\tau) \asymp N\tau^2,$$
where $N$ is the sample size, $\log \Mcal_{\mathcal{F}}^{\operatorname{glo}}(\tau)$ is the global metric entropy of the density set $\mathcal{F}$ at a Hellinger distance $\tau$, and $\tau^2$ determines the order of the minimax rate.

Recently, \cite{shrotriya2023revisiting} established that local metric entropy consistently determines the minimax rate for star-shaped density classes, where the densities are assumed to be (uniformly) bounded from above and below. Specifically, they revised the Le Cam equation by replacing the global entropy with local entropy (see Definition \ref{def local metric}) and the Hellinger metric with the $L_2$ metric. Moreover, they proved that the constraints on the density class could be relaxed to a star-shaped density class that is uniformly bounded above and contains a density that is bounded below. They proposed a `multistage sieve' maximum likelihood estimator that achieves these bounds.

In this paper, we extend this scenario to adversarial data. Specifically, while still seeking an estimator for the true density within this bounded star-shaped density class, we now face corrupted data. We assume that a fraction $\epsilon \leq \frac{1}{3}$ of the $N$ observations are arbitrarily corrupted. We modify the `multistage sieve' algorithm from \cite{shrotriya2023revisiting} accordingly. Specifically, we establish a new criterion to compare the performance of any two densities $g_1$ and $g_2$ based on the corrupted data (see \eqref{group log likelihood difference}). By using the local entropy of the set $\mathcal{F}$, we obtain the minimax upper and lower bounds up to proportionality constants under the squared $L_2$ loss. These two bounds do not always match, but under Condition \ref{Condition: L_2 and TV}, we are able to determine the minimax risk, up to proportionality constants.


We define 
$$\xi(\epsilon) := \max_{f_1, f_2 \in \mathcal{F}, \, \TV(P_1, P_2) \leq \frac{\epsilon'}{1 - \epsilon'}} \left\{\|f_1 - f_2\|_2^2\right\},$$
where \(f_1\) and \(f_2\) denote the densities of distributions \(P_1\) and \(P_2\), respectively, \(\TV\) is the total variation distance, \(\left\|\cdot\right\|_2\) is the \(L_2\) norm, and \(\epsilon' = \epsilon - \frac{1}{N}\).

\begin{condition}[Condition for optimality]\label{Condition: L_2 and TV}
Assume we have
$
\xi(\epsilon) \gtrsim \epsilon \text{ when } \epsilon \geq \frac{k}{N},
$
where \(k > 7\) is a constant. For $\epsilon < \frac{k}{N}$, no further assumptions are made.
\end{condition}

\begin{remark}
    As demonstrated in the proof of Lemma \ref{new lower bound}, we established that $\xi(\epsilon) \lesssim \epsilon$. Therefore, under Condition \ref{Condition: L_2 and TV}, it follows that $\xi(\epsilon) \asymp \epsilon$ when $ \epsilon \geq \frac{k}{N}$. Furthermore, Theorem \ref{Minimax Rate} shows that, under this condition, our algorithm achieves optimality.
\end{remark}

\begin{remark}
The following provides a simple example where Condition \ref{Condition: L_2 and TV} holds. 
Consider the case where $\mathcal{F}$ includes the constant function \( f(x) := 1 \) for \( x \in [0, 1] \), and let the function \( g(x) \) be defined as:
\[
g(x) := 
\begin{cases} 
\frac{1}{2}, & 0 \leq x < \frac{\epsilon}{2}, \\
1, & \frac{\epsilon}{2} \leq x \leq 1 - \frac{\epsilon}{2}, \\
\frac{3}{2}, & 1 - \frac{\epsilon}{2} < x \leq 1,
\end{cases}
\]
where \( \epsilon \) is a small positive constant. We can verify that the total variation distance satisfies \( \TV(f,g) \asymp \epsilon \) and the \( L_2 \) norm satisfies \( \|f - g\|_2 \asymp \sqrt{\epsilon} \). Therefore, \( \xi(\epsilon) \gtrsim \epsilon \), where \( \xi(\epsilon) \) is defined in Lemma \ref{new lower bound}. 

In this case, by Theorem \ref{Minimax Rate}, the proposed estimator achieves the minimax rate. For instance, the class \( \mathcal{F} \) can be taken as the set of monotone densities that includes both \( f \) and \( g \).
\end{remark}

We adopt the same definition for a general class of bounded densities as in \cite{shrotriya2023revisiting}, denoted by $\mathcal{F}_B^{[\alpha,\beta]}$. We assume that the true density of interest lies within a known star-shaped subset of this ambient density class. Furthermore, we will adhere to the definitions provided in \cite{shrotriya2023revisiting}, specifically Definitions \ref{def:KL-divergence}, \ref{def:packing sets}, and \ref{def local metric}.

\begin{definition}[Ambient density class $\Fcal_B^{[\alpha,\beta]}$]\label{def of F}
    Given constants $0 < \alpha < \beta < \infty$, for some fixed dimension $p \in \mathbb{N}$, and a common known (Borel measurable) compact support set $B \subseteq \mathbb{R}^p$ (with positive measure), we then define the class of density functions, $\Fcal_B^{[\alpha,\beta]}$, as follows:

\[
\Fcal_B^{[\alpha,\beta]} := \left\{ f : B \to [\alpha, \beta] \ \middle| \ \int_B f \ d\mu = 1, f \text{ measurable} \right\}
\]
where $\mu$ is the dominating finite measure on $B$. We always take $\mu$ to be a (normalized) probability measure on $B$.

Furthermore, we endow $\Fcal_B^{[\alpha,\beta]}$ with the $L_2$-metric. That is, for any two densities $f, g \in \Fcal_B^{[\alpha,\beta]}$, we denote the $L_2$-metric between them to be

\[
\|f - g\|_2 := \left( \int_B (f - g)^2 \ d\mu \right)^{\frac{1}{2}}. 
\]
\end{definition}

\begin{remark}
    When $\epsilon$ is sufficiently small, specifically less than $d_1/3$, where $d_1$ is the diameter of $\Fcal$ in the total variation distance, it becomes feasible to identify two densities whose distance is exactly $\epsilon$, see Lemma $1.3$ in \cite{AkshayPrasadan2024}. Note that by using Holder's inequality, we have $\int |f_1 - f_2|d\mu \leq (\int (f_1 - f_2)^2d\mu)^{1/2}$. Therefore, we have $\TV(f_1, f_2) \lesssim \|f_1 - f_2\|_2$. Consequently, we have $\xi(\epsilon) \gtrsim \epsilon^2$. Therefore, for sufficiently small $\epsilon$, the relationship $\epsilon \gtrsim \xi(\epsilon) \gtrsim \epsilon^2$ holds.
\end{remark}

\begin{remark}
    As mentioned and analyzed in \cite{shrotriya2023revisiting}, Definition \ref{def of F} implies that $\mathcal{F}_B^{[\alpha,\beta]}$ forms a convex set, and that the metric space $\left( \mathcal{F}_B^{[\alpha,\beta]}, \left\|\cdot\right\|_2 \right)$ is complete, bounded, but may not be totally bounded.
\end{remark}

\textit{\textbf{Core Problem:}} Suppose we observe \(n\) observations \(\mathbf{\tilde{X}} := (\tilde{X_1}, \dots, \tilde{X_n})^\top \overset{\text{i.i.d.}}{\sim} f\), where \(f \in \Fcal\) is a fixed but unknown density function. Here, \(\Fcal \subset \Fcal_B^{[\alpha, \beta]}\) is a known star-shaped set. For a fixed \(\epsilon \in [0, \frac{1}{3}]\), independent of \(n\), a fraction \(\epsilon\) of the observations in \(\mathbf{\tilde{X}}\) are arbitrarily corrupted by some procedure \(\mathcal{C}\). Let \(\mathbf{X} = \mathcal{C}(\mathbf{\tilde{X}})\), where the \(i\)th coordinate is \(X_i = \mathcal{C}(\tilde{X}_i)\) if it is corrupted, otherwise \(X_i = \tilde{X}_i\). Our goal is to propose a universal estimator for \(f\) based on this corrupted data and to derive the exact (up to constants) squared \(L_2\)-minimax rate of estimation in expectation.

If Condition \ref{Condition: L_2 and TV} holds, our estimator \(\nu^*\) achieves the following minimax rate:
\[
\inf_{\hat{\nu}} \sup_{f} \mathbb{E}_f\|\hat{\nu} - f\|_2^2 \asymp \max\{{\tau^*}^2 \wedge d^2, \epsilon \wedge d^2\}.
\]
Here \(\tau^* := \sup\{\tau : N\tau^2 \leq \log \Mcal^{\loc}_{\Fcal}(\tau, c)\}\), with \(\log \Mcal^{\loc}_{\Fcal}(\tau, c)\) being the \(L_2\)-local metric entropy of \(\Fcal\) (see Definition \ref{def local metric}). The quantity \(d := \operatorname{diam}_2(\Fcal)\) refers to the \(L_2\)-diameter of \(\Fcal\), which is finite due to the boundedness of \(\Fcal_B^{[\alpha,\beta]}\) in our setting.

\begin{remark}[Extending results to \(\mathcal{F}_B^{[0,\beta]}\)]
    If the class \(\mathcal{F}\) contains at least one density function that is bounded away from \(0\), then the boundedness requirement on \(\mathcal{F}\) can be relaxed to \(\mathcal{F} \subset \mathcal{F}_B^{[0,\beta]}\), where \(\mathcal{F}\) is a star-shaped set. This result is proved in \cite{shrotriya2023revisiting} and is shown in Proposition \ref{Extending results to zero lower bound}.
\end{remark}

\subsection{Related Literature}
There is an extensive body of research focused on deriving minimax lower bounds in density estimation. For instance, \cite{boyd1978lower} established a fundamental lower bound for the mean integrated \( p- \)th power error, showing that \(\mathbb{E}_f\int_{-\infty}^\infty |f(x) - \hat{f}_n(x)|^p \, \d x \gtrsim n^{-1}\) (with \( p \geq 1 \)) for any arbitrary density estimator. \cite{devroye1983arbitrarily} further explored lower bounds for the normalized error \(\mathbb{E}_f\int_{-\infty}^\infty |f(x) - \hat{f}_n(x)|^p \, \d x / \int_{-\infty}^\infty f^p(x) \, \d x\) in density estimation. 

\citet{bretagnolle1979estimation} assumed that the density function \( f \) belongs to a subset \( \mathcal{F} \) characterized by a certain level of smoothness. They derived sharp lower bounds for the minimax risk under the \( L_p \) norm within this smoothness-constrained class. Since a smoothness assumption for a family of densities generally implies certain dimensional properties, as discussed by \citet{birge1986estimating}, they assumed specific dimensional properties for the density families and used metric entropy-based methods to derive sharp risk bounds for many smooth density families under the Hellinger metric. \cite{efroimovich1982estimation} considered the problem where the density is assumed to belong to an ellipsoid in a Hilbert space and is square-integrable. They used an orthonormal basis \(\{u_j(x)\}_{j = 0, 1, \dots}\) to uniquely represent the true density as \(f(x) = \sum_{j = 0}^\infty \theta_j u_j(x)\), where \(\theta_j = \int f(x)u_j(x)\d \nu\). This approach transforms the problem into the estimation of the infinite-dimensional parameter \(\theta\). The parameter \(\theta\) is assumed to belong to an ellipsoid defined as \(\Theta := \{\theta: \sum_{j=0}^\infty a_j \theta_j^2 \leq 1 \text{ and } \lim_{j \rightarrow \infty}a_j = \infty\}\). They then derived both the asymptotic lower bounds for the minimax risk and the corresponding upper bounds. \cite{has1978minskii} utilized Fano's lemma to derive lower bounds for density estimation in the \( L_2 \) metric under specific smoothness conditions. \cite{Ibragimov1997} applied Fano's lemma to establish minimax lower bounds for estimating a shifted parameter in infinite-dimensional Gaussian white noise. 

\cite{ibragimov1978capacity} explored the capacity of communication channels defined by the stochastic differential equation \( \d X(t) = S(t) \, \d t + \epsilon \, \d w(t) \), where \( w(t) \) is a standard Wiener process and \( \epsilon \) is a small positive parameter. They focused on the problem of estimating an unknown function \( S \), given observations of the process \( X(t) \) over the interval \([0, 1]\). The function \( S \) was assumed to belong to a predefined subset \( \mathcal{S} \) of the unit ball in the \( L^2(0, 1) \) space. By leveraging an argument analogous to Fano's inequality, the authors derived a lower bound on the minimax risk for the estimation problem under the \( L_p \) norm. In \cite{Ibragimov1983}, the authors considered the problem of estimating a density function \( f \) from a known class of functions \( \mathcal{F} \). They utilized Fano's lemma to derive a minimax lower bound for the estimation error measured in the \( L_p \) norm, where \( 2 \leq p \leq \infty \). The derivation is based on the assumption that within the class \( \mathcal{F} \), there exist \( p(\delta) \) distinct densities \( f_{i\delta} \) such that the distance between any two distinct densities satisfies \( \|f_{i\delta} - f_{j\delta}\| \geq \delta \), and the loss function \( L \) satisfies \( L(\delta/2) > 0 \). Here, \( p(\delta) \) is an integer that depends on \( \delta \), which is a positive constant. \cite{10.1214/aos/1176347736} employed Fano's lemma to derive minimax lower bounds under the scenario where the unknown density \( f \) lies within a known set \(\mathcal{F}\). They argued that if \(\phi \in \Phi \subset \mathcal{F}\) is used to estimate \( f \), and if \(\mathcal{F}\) consists of finite sets with a fixed number \( N \) of elements, the results are closely tied to entropy. \cite{Yu1997} presented several lemmas for deriving optimal lower bounds using both Assouad's and Fano's lemma arguments for densities on compact supports. \cite{yang1999information} demonstrated that global metric entropy bounds effectively capture the minimax risk over an entire function class or substantial subsets of it.

The minimax lower bound techniques, particularly in the context of nonparametric density estimation, are well-documented in classical references such as \cite{Devroye1987}, \cite{Devroye1985}, and \cite{LeCam1986}. More recent works, including \cite{Tsybakov2009} and \cite{wainwright2019high}, also offer valuable insights.

In addition to the extensive research on minimax lower bounds, there has also been significant work on deriving minimax upper bounds in density estimation. For example, \cite{86996} employs the minimum distance principle to derive density estimators defined by \(\hat{f} := \argmin_{q \in \Gamma} \left(L(q) + \log\frac{1}{\prod_{i=1}^n q(X_i)}\right)\), where \(\Gamma\) is a set of candidate densities and \(L(q)\) is a nonnegative function that satisfies Kraft's inequality \(\sum_q 2^{-L(q)} \leq 1\). They use relative entropy (Kullback-Leibler divergence) to upper bound the error under the Hellinger distance. Similarly, \cite{YannisG.Yatracos1985} explores minimum distance estimators, providing uniformly consistent robust estimators and using entropy to derive upper bounds under the \(L_1\) metric. \cite{birge1983approximation} investigates the speed of estimation under the Hellinger metric. Specifically, they consider the minimax risk defined as 
\[
R_n(q) := \inf_{T_n} \sup_{\theta \in \Theta} \Ebb_\theta[d^q(\theta, T_n)],
\]
where \(T_n\) is any estimator of the parameter \(\theta\). The speed of estimation is described by a function \(r(n)\) such that there exist constants \(C_1\) and \(C_2\) satisfying 
\[
C_1r^q(n) \leq R_n(q) \leq C_2r^q(n).
\]
As mentioned earlier, \cite{birge1986estimating} leverages the dimensional properties of density families and uses metric entropy to derive sharp risk bounds under the Hellinger metric. \cite{van1993hellinger} examines the maximum likelihood estimator \(\hat{\theta}_n\) of the true parameter \(\theta\), based on the empirical distribution \(P_n\). The estimator \(\hat{\theta}_n\) is defined as the maximizer of the log-likelihood function, given by 
\[
\int \log(f_{\hat{\theta}_n}) \, dP_n = \max_{\theta \in \Theta} \int \log(f_\theta) \, dP_n.
\]
The study investigates the convergence of \(\hat{f} := f_{\hat{\theta}_n}\) to the true density \(f\) under the Hellinger metric.
\cite{wong1995probability} investigates upper bounds for density estimation using the `sieve' MLE technique, utilizing a metric constructed from a family of discrepancy indices, which includes the squared Hellinger distance, Kullback-Leibler divergence, and Pearson's \(\chi^2\) statistic as special cases. A ‘sieve’ estimator, as noted in \cite[Chapter 8]{Grenander1981}, effectively estimates the parameter of interest through an optimization procedure (e.g., maximum likelihood) over a constrained subset of the parameter space. \cite{birge1993rates} examines minimum contrast estimators (M.C.E.) in a nonparametric setting, deriving upper bounds for their performance, with the main theorem linking the rate of convergence of these estimators to the entropy structure of the parameter space. Furthermore, \cite{Birge1998} extends this work by focusing on minimum contrast estimators on sieves, computing sharp rates of convergence. More recently, \cite{shrotriya2023revisiting} introduced local metric entropy to determine the minimax rate for star-shaped density classes where densities are uniformly bounded above and below. They revised the Le Cam equation using local entropy and the \(L_2\) metric, relaxed constraints on the density class, and proposed a `multistage sieve' MLE that achieves these bounds.

\cite{hampel1968contributions}, \cite{huber1992robust}, and \cite{tukey1960survey} are seminal works in robust statistics that have been systematically studied. Huber's contamination model \cite{huber1965robust, huber1992robust} is a well-known framework for studying robustness in various modern settings. In this model, observations are assumed to be i.i.d. from the distribution \( P_{\epsilon, f, G} := (1-\epsilon)P_f + \epsilon G \), where \( P_f \) is a distribution on \(\mathbb{R}^d\) with the density of interest \( f \), \( G \) is an arbitrary contamination distribution, and \(\epsilon\) represents the contamination proportion.
\cite{chen2016general} investigates the robust minimax risk, \(\inf_{\hat{\theta}}\sup_{\theta \in \Theta}\sup_Q\mathbb{E}_{\epsilon, \theta, Q}L(\hat{\theta}, \theta)\), for a given parameter space \(\Theta\) and a loss function \(L\) under Huber's contamination model: \((1-\epsilon)P_\theta + \epsilon Q\). They demonstrated that this robust minimax risk is upper bounded by a universal constant times \(\min_{\delta>0}\left\{\frac{\log \mathcal{M}(\delta, \Theta, \TV)}{n} + \delta^2\right\} \vee \epsilon^2\), where \(\mathcal{M}(\delta, \Theta, \TV)\) denotes the \(\delta\)-covering number of \(\Theta\) with respect to the total variation distance. In our opinion, this is probably the closest paper to ours on the topic since it provides entropic upper bounds and uses a testing lemma to achieve the rate. We would therefore like to underscore the key differences between our work and \cite{chen2016general}:
\begin{enumerate}
    \item They work with the Huber model while we work with an adversarial model;
    \item They use a $\operatorname{TV}$-based loss function while we use the squared $L_2$ loss;
    \item We provide bounds on the expected squared $L_2$ loss while they give high-probability bounds;
    \item In our opinion there is no corresponding general lower bound to their upper bound. Even though \cite{chen2016general} cite \cite{yang1999information} and claim there is a matching lower bound we are unaware how one can derive a matching bound of the sort $\min_{\delta>0}\left\{\frac{\log \mathcal{M}(\delta, \Theta, \TV)}{n} + \delta^2\right\}$;
    \item We provide upper and lower bounds for general density estimation problems. If Condition \ref{Condition: L_2 and TV} holds, then these bounds are matching. The dependence on the amount of data corruption $\epsilon$ is different from the one exhibited in \cite{chen2016general} as they have $\epsilon^2$ (on the squared TV loss function), while our rate is $\epsilon$ on the squared $L_2$ loss function;
    \item Our testing lemma is completely different from the one used by \cite{chen2016general}. We use a likelihood ratio based test while \cite{chen2016general} use a test function called `Scheffe's estimate', which is inspired by \cite{YannisG.Yatracos1985}'s famous estimator, in order to achieve their entropic upper bound.
\end{enumerate}
In \cite{chen2017robustcovariancescattermatrix}, a general minimax lower bound on the probability for the \(\epsilon\)-contamination in Huber's model is provided. Specifically, it is proven that under certain assumptions, for any \(\epsilon \in [0, 1]\), the inequality \(\inf_{\hat{\theta}} \sup_{\theta \in \Theta} \sup_Q \mathbb{P}_{\epsilon,\theta,Q} \{ L(\hat{\theta}, \theta) > \mathcal{M}(\epsilon)\} > c\) holds, where \(\mathcal{M}(\epsilon) \asymp \mathcal{M}(0) \vee \omega(\epsilon, \Theta)\), and \(\omega(\epsilon, \Theta) = \sup \{L(\theta_1, \theta_2): \TV(P_{\theta_1}, P_{\theta_2}) \leq \epsilon/(1-\epsilon); \theta_1, \theta_2 \in \Theta\}\). This result provides inspiration for deriving a lower bound for the minimax rate within our specific problem setting.
\cite{liu2018densityestimationcontaminateddata} investigates density estimation under pointwise loss in Huber's contamination model and derives the minimax rate for the estimator of \(f(x_0)\) at a point \(x_0 \in \mathbb{R}\), assuming H\"older smoothness for both the true density and the contamination density. \cite{gao2020robust} studies robust regression in the settings of Huber’s contamination models. \cite{zhang2023adaptive} adopt a general \( L_p \, (1 \leq p < \infty)\) loss and assume that the function \( f \) belongs to an anisotropic Nikol'skii class, which serves as a natural extension of the H\"older class under the \(L_p\) norm on \(\mathbb{R}^d\). They address the problem of adaptive minimax density estimation on \(\mathbb{R}^d\) with \(L_p\) loss functions under Huber's contamination model. 

In our problem setting, there is a key difference compared to Huber's contamination model: For a given sample of size \(N\), we have a known fraction \(\epsilon\) of the observations coming from a contamination density, with no randomness in the number of contaminated observations. In contrast, under Huber's contamination model, the number of contaminated observations in a sample of size \(n\) includes some randomness.

\subsection{Organization}
The remainder of this paper is organized as follows. In Section \ref{main results}, we establish risk bounds for our setting, starting with the demonstration of the key topological equivalence between the $L_2$-metric and the Kullback-Leibler divergence in $\mathcal{F}_B^{[\alpha,\beta]}$. Next, in Section \ref{Lower bound}, we derive minimax lower bounds for our setting, introducing additional mathematical background as needed, such as local metric entropy. In Section \ref{Upper Bound Section}, we define our likelihood-based estimator and provide the intuition behind its construction. We then present our algorithm for constructing the estimator using a tree structure, followed by a derivation of its minimax risk upper bound. In Section \ref{Minimax Rate Section}, we derive the minimax rate for our algorithm. Finally, in Section \ref{discussion}, we summarize our findings and discuss potential directions for future research.

\subsection{Notation}
We outline some commonly used notation here. We use $a \vee b$ and $a \wedge b$ to denote the maximum and minimum of two numbers $\{a, b\}$, respectively. Throughout the paper, $\left\|\cdot\right\|_2$ denotes the $L_2$-metric in $\mathcal{F}$. For integers \(n\) and \(m\) with \(n \leq m\), \([n, m]\) denotes the set \(\{n, n + 1, \dots, m\}\). If \(n = 1\), we will simply write \([m]\). We use $\B_2(\theta, r)$ to denote a closed $L_2$-ball centered at the point $\theta$ with a positive radius $r$. The symbols $\lesssim$ and $\gtrsim$ are used to indicate $\leq$ and $\geq$ up to absolute (positive) constant factors, respectively. For two sequences $a_n$ and $b_n$, we write $a_n \asymp b_n$ if both $a_n \lesssim b_n$ and $a_n \gtrsim b_n$ hold. Throughout the paper, $\log$ refers to the natural logarithm. Our use of $\{\alpha,\beta\}$ exclusively refers to the constants in Definition \ref{def of F}, defining $\mathcal{F}_B^{[\alpha,\beta]}$ (and thus $\mathcal{F}$).

\section{Main Results}\label{main results}
First, we have the result that there exists a `topological equivalence' between the $L_2$-metric and the KL-divergence on the density class $\mathcal{F}_B^{[\alpha,\beta]}$. This result was stated without proof in \cite{klemela2009smoothing} and later proved in \cite{shrotriya2023revisiting}.
\begin{definition}[KL-divergence]\label{def:KL-divergence}
    For any two densities $f, g \in \Fcal_B^{[\alpha,\beta]}$, the KL-divergence between them is defined to be
\[
d_{\text{KL}}(f \| g) := \int_B f \log \left( \frac{f}{g} \right) \, d\mu = \mathbb{E}_f \log \left( \frac{f(X)}{g(X)} \right),
\]
where $X \sim f$.
\end{definition}

The following is Lemma 2 in \cite{shrotriya2023revisiting}.
\begin{lemma}[KL-$L_2$ equivalence on $\Fcal_B^{[\alpha,\beta]}$]\label{lemma2 in shrotriya}
    For each pair of densities $f,g \in \Fcal_B^{[\alpha,\beta]}$, the following relationship holds:
\begin{equation}
c(\alpha, \beta) \|f - g\|_2^2 \leq d_{KL}(f\|g) \leq \frac{1}{\alpha} \|f - g\|_2^2,
\end{equation}
where we denote $c(\alpha, \beta) := \frac{h(\beta/\alpha)}{\beta} > 0$. Here $h : (0, \infty) \rightarrow \mathbb{R}$ is defined to be
\begin{equation}
h(\gamma) := 
\begin{cases}
\frac{\gamma - 1 - \log \gamma}{(\gamma - 1)^2} & \text{if } \gamma \in (0, \infty) \setminus \{1\} \\
\frac{1}{2} = \lim_{x \rightarrow 1} \frac{x - 1 - \log x}{(x - 1)^2} & \text{if } \gamma = 1,
\end{cases}
\end{equation}
and is positive over its entire support. It is also easily seen that on $\Fcal_B^{[\alpha,\beta]}$, $d_{KL}$ (and hence the $L_2$-metric) is also equivalent to the Hellinger metric. Furthermore, these properties are also inherited by $\Fcal \subset \Fcal_B^{[\alpha,\beta]}$, which is our density class of interest.
\end{lemma}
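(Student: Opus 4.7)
The plan is to obtain both inequalities from one-sided Taylor bounds on the logarithm combined with the identity $\int_B (f - g)\,d\mu = 0$, which holds because $f$ and $g$ are both densities on $B$. Starting from $d_{\mathrm{KL}}(f\|g) = \int_B f\log(f/g)\,d\mu = -\int_B f\log(g/f)\,d\mu$, suitable upper and lower bounds on $-\log r$ near $r = 1$ convert directly into the two desired inequalities.

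For the upper bound, the standard inequality $\log x \leq x - 1$ applied at $x = g/f$ gives $-\log(g/f) \leq f/g - 1 = (f-g)/g$. Multiplying by $f$, integrating, and cancelling the linear term via $\int_B (f-g)\,d\mu = 0$ yields $d_{\mathrm{KL}}(f\|g) \leq \int_B (f-g)^2/g\,d\mu \leq \|f-g\|_2^2/\alpha$, where the last step uses $g \geq \alpha$. This gives the right-hand inequality directly.

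For the lower bound, the key is the exact identity $-\log r = (1-r) + h(r)(r-1)^2$ for $r > 0$, which is just the definition of $h$ rearranged. Substituting $r = g/f$, multiplying by $f$, and integrating gives, after the same cancellation of the linear piece, the clean representation $d_{\mathrm{KL}}(f\|g) = \int_B h(g/f)\,(f-g)^2/f\,d\mu$. On $\mathcal{F}_B^{[\alpha,\beta]}$ we have $g/f \in [\alpha/\beta,\beta/\alpha]$ and $1/f \geq 1/\beta$, so once one knows $h$ is nonincreasing on $(0,\infty)$, we obtain $d_{\mathrm{KL}}(f\|g) \geq h(\beta/\alpha)/\beta \cdot \|f-g\|_2^2 = c(\alpha,\beta)\,\|f-g\|_2^2$.

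The main, and essentially only nontrivial, step is therefore verifying the monotonicity of $h$. I would do this by computing $h'(\gamma)$ directly; its sign is controlled by the auxiliary function $q(\gamma) := (\gamma-1)^2/\gamma - 2(\gamma-1-\log\gamma)$ divided by $(\gamma-1)^3$. A short calculation gives $q(1) = 0$ and $q'(\gamma) = -(1 - 1/\gamma)^2 \leq 0$, whence $q(\gamma) \leq 0$ for $\gamma > 1$ and $q(\gamma) \geq 0$ for $\gamma \in (0,1)$; in both cases this matches the sign of $(\gamma-1)^3$ and yields $h'(\gamma) \leq 0$. Positivity of $h$ comes from the classical inequality $\gamma - 1 - \log \gamma > 0$ for $\gamma \neq 1$, with the value $h(1) = 1/2$ obtained by L'H\^opital, so $c(\alpha,\beta) > 0$. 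The claimed equivalence with the Hellinger metric on $\mathcal{F}_B^{[\alpha,\beta]}$ then follows routinely from the identity $(\sqrt{f} - \sqrt{g})^2 = (f-g)^2/(\sqrt{f}+\sqrt{g})^2$ together with the uniform bounds $\alpha \leq f,g \leq \beta$, and inheritance by any $\mathcal{F} \subset \mathcal{F}_B^{[\alpha,\beta]}$ is immediate since no structural property of $\mathcal{F}_B^{[\alpha,\beta]}$ beyond the sup/inf bounds was used.
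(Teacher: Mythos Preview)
Your proof is correct. The paper itself does not prove this lemma; it simply cites it as Lemma~2 of \cite{shrotriya2023revisiting}, so there is no in-paper argument to compare against. Your approach---using $\log x \leq x-1$ at $x=f/g$ for the upper bound, and the exact identity $-\log r = (1-r) + h(r)(r-1)^2$ together with monotonicity of $h$ for the lower bound---is the natural one and is essentially what appears in that reference.

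One small wording slip: you write that the sign of $q(\gamma)$ ``matches the sign of $(\gamma-1)^3$'', but in fact they have \emph{opposite} signs (for $\gamma>1$ you have $q\leq 0$ while $(\gamma-1)^3>0$, and the reverse for $\gamma\in(0,1)$). This opposition is exactly what gives $h'(\gamma)=q(\gamma)/(\gamma-1)^3\leq 0$. Your conclusion is correct; only the word ``matches'' should be replaced by ``is opposite to''.
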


\subsection{Lower Bounds}\label{Lower bound}
We first establish lower bounds for the density estimator. For completeness, we will introduce Fano's inequality for the star-shaped density class $\mathcal{F}$ [see \cite{Tsybakov2009}, Lemma 2.10].

\begin{lemma}[Fano's inequality for $\Fcal$]\label{Fano's inequality}
    Let $\{ f^1, \ldots, f^m \} \subset \Fcal$ be a collection of $\tau$-separated densities (i.e., $\| f^i - f^j \|_2 > \tau$ for $i \neq j$), in the $L_2$-metric. Suppose $J$ is uniformly distributed over the index set $[1, m]$, and $(X_i \mid J = j) \sim f^j$ for each $i \in [1, N]$. Then
\[
\inf_{\hat{\nu}} \sup_{f} \mathbb{E}_f \| \hat{\nu}(\mathbf{X}) - f \|_2^2 \geq \frac{\tau^2}{4} \left( 1 - \frac{I(\mathbf{X}; J) + \log 2}{\log m} \right),
\]
where $I(\mathbf{X}; J) \leq \sum_{i=1}^N I(X_i; J)$ is the mutual information between $\mathbf{X}$ and $J$. 

Here $I(X_1; J) := \frac{1}{m}\sum_{j = 1}^md_{KL}(f^j||\Bar{f})$ with $\Bar{f} := \frac{1}{m}\sum_{j = 1}^m f^j$. [See Section $15.3$ in \cite{wainwright2019high}].
\end{lemma}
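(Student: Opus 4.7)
The plan is to reduce the estimation problem to a multiway hypothesis test over the finite family $\{f^1,\dots,f^m\}$ and then invoke the standard information-theoretic Fano inequality. First, I would define the nearest-neighbor test $\hat{J} := \argmin_{i \in [m]} \|\hat{\nu}(\mathbf{X}) - f^i\|_2$. The $\tau$-separation hypothesis together with the triangle inequality implies that on the event $\{\hat{J} \neq J\}$ one must have $\|\hat{\nu}(\mathbf{X}) - f^J\|_2 \geq \tau/2$; otherwise $f^J$ would be strictly closer to $\hat{\nu}$ than every other $f^i$, forcing $\hat{J}=J$.

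Next, Markov's inequality gives $\mathbb{E}_{f^J}\|\hat{\nu}(\mathbf{X}) - f^J\|_2^2 \geq (\tau^2/4)\,\mathbb{P}(\hat{J} \neq J)$, and since the supremum over $f \in \mathcal{F}$ dominates the average over the finite subfamily,
$$\sup_{f \in \mathcal{F}} \mathbb{E}_f\|\hat{\nu}(\mathbf{X}) - f\|_2^2 \;\geq\; \frac{1}{m}\sum_{j=1}^m \mathbb{E}_{f^j}\|\hat{\nu}(\mathbf{X}) - f^j\|_2^2 \;\geq\; \frac{\tau^2}{4}\,\mathbb{P}(\hat{J} \neq J),$$
where the last probability is taken jointly over $J$ uniform on $[m]$ and $\mathbf{X} \mid J \sim (f^J)^{\otimes N}$. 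Taking an infimum over $\hat{\nu}$ preserves this inequality.

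I would then close the argument by applying Fano's inequality in its usual form: for any $\hat{J}$ that is a measurable function of $\mathbf{X}$,
$$\mathbb{P}(\hat{J} \neq J) \;\geq\; 1 - \frac{I(\mathbf{X}; J) + \log 2}{\log m},$$
which is the standard consequence of bounding $H(J \mid \mathbf{X})$ above by the binary entropy of the error event plus $\mathbb{P}(\hat{J}\neq J)\log(m-1)$, combined with $H(J) = \log m$. For the companion inequality $I(\mathbf{X}; J) \leq \sum_{i=1}^N I(X_i; J)$, I would use that the $X_i$ are conditionally independent given $J$ together with the chain rule for mutual information; the identity $I(X_1; J) = \frac{1}{m}\sum_{j=1}^m d_{KL}(f^j \| \bar{f})$ with $\bar{f} = \frac{1}{m}\sum_j f^j$ then follows directly from the definition of mutual information under a uniform prior.

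There is no serious obstacle here---this is a textbook reduction from Chapter 2 of \cite{Tsybakov2009}, and the only step requiring a modicum of care is the estimation-to-testing reduction, where one must apply the $\tau$-separation and triangle inequality correctly to guarantee that a small $L_2$-error identifies the correct hypothesis. Every remaining step is an off-the-shelf application of the classical Fano method, so the role of this lemma in the paper is simply to package these ingredients in the precise form needed for the subsequent local-entropy-based lower bounds.
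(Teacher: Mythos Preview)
Your proof is correct and is exactly the standard reduction-from-estimation-to-testing argument underlying the classical Fano method. The paper does not give its own proof of this lemma: it is stated with a direct citation to \cite{Tsybakov2009}, Lemma 2.10, and treated as a known result, so your write-up simply fills in the textbook argument that the paper defers to the reference.
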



Next, we provide the classical definition of packing sets and packing numbers.

\begin{definition}[Packing sets and packing numbers of $\Fcal$ in the $L_2$-metric]\label{def:packing sets}
Given any $\tau > 0$, an $\tau$-packing set of $\Fcal$ in the $L_2$-metric, is a set $\{f^1, \dots, f^m\} \subset \Fcal$ of $\tau$-separated densities (i.e., $\|f^i - f^j\|_2 > \tau$ for $i \neq j$) in the $L_2$-metric. The corresponding $\tau$-packing number, denoted by $\Mcal(\tau, \Fcal)$, is the cardinality of the maximal $\tau$-packing of $\Fcal$. We refer to $\log \Mcal^{\operatorname{glo}}_{\Fcal}(\tau) := \log \Mcal(\tau, \Fcal)$ as the global metric entropy of $\Fcal$.
\end{definition}

Now, we define the notion of local metric entropy, which plays a key role in the development of our risk bound.
\begin{definition}[Local metric entropy of $\Fcal$]\label{def local metric}
 Let $c > 0$ be fixed, and $f \in \Fcal$ be an arbitrary point. Consider the set $\Fcal \cap \B_2(f, \tau)$. Let $\Mcal(\tau/c, \Fcal \cap \B_2(f, \tau))$ denote the $\tau/c$-packing number of $\Fcal \cap \B_2(f, \tau)$ in the $L_2$-metric. Let

\[
\Mcal^{\loc}_{\Fcal}(\tau, c) := \sup_{f \in \Fcal} \Mcal(\tau/c, \Fcal \cap \B_2(f, \tau)) =: \sup_{f \in \Fcal} \Mcal^{\operatorname{glo}}_{\Fcal \cap \B_2(f, \tau)}(\tau/c).
\]

We refer to $\log \Mcal^{\loc}_{\Fcal}(\tau, c)$ as the \textit{local metric entropy of} $\Fcal$.

\end{definition}

The following is Lemma 6 of \cite{shrotriya2023revisiting}. By applying Fano's inequality directly, we obtain a minimax lower bound for our star-shaped density estimation setting over $\mathcal{F}$. Notably, this lower bound does not depend on the corruption rate $\epsilon$. Immediately afterward, we establish a lower bound that accounts for the corruption level $\epsilon$.

\begin{lemma}[Minimax lower bound in general case]\label{lowerbound in shrotriya}
    Let $c > 0$ be fixed, and independent of the data samples $\mathbf{X}$. Then the minimax rate satisfies
$$\inf_{\hat{\nu}}\sup_{f\in\Fcal}\Ebb_f\| \hat{\nu}(\mathbf{X}) - f \|_2^2 \geq \frac{\tau^2}{8c^2},$$
if $\tau$ satisfies $\log \Mcal_{\Fcal}^{\loc}(\tau, c) > 2N\tau^2/\alpha + 2\log 2$.
\end{lemma}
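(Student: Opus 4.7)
The proof is a textbook application of Fano's inequality tied to the local packing construction, so the plan is essentially to reduce estimation over $\Fcal$ to a multiple hypothesis testing problem over a local packing, and then to control the mutual information by combining KL--$L_2$ equivalence (Lemma \ref{lemma2 in shrotriya}) with a variance-decomposition trick.

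First, I would invoke the definition of local metric entropy. By Definition \ref{def local metric}, there exists some $f_0 \in \Fcal$ and a $\tau/c$-packing $\{f^1,\ldots,f^m\} \subset \Fcal \cap \B_2(f_0,\tau)$ of cardinality $m = \Mcal_\Fcal^{\loc}(\tau,c)$. In particular the $f^j$ are mutually $\tau/c$-separated in the $L_2$-metric, yet they all lie in an $L_2$-ball of radius $\tau$ around $f_0$. I would let $J$ be uniform on $[m]$ and, conditional on $J = j$, sample $X_1,\ldots,X_N \stackrel{\text{iid}}{\sim} f^j$. Feeding this ensemble into Lemma \ref{Fano's inequality} with separation $\tau/c$ gives
\[
\inf_{\hat{\nu}} \sup_{f \in \Fcal} \Ebb_f \|\hat{\nu}(\mathbf{X}) - f\|_2^2 \;\geq\; \frac{\tau^2}{4c^2}\left(1 - \frac{I(\mathbf{X}; J) + \log 2}{\log m}\right).
\]
It then suffices to show that the parenthetical factor is at least $1/2$.

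The heart of the argument is bounding $I(\mathbf{X}; J)$. Using the standard tensorization $I(\mathbf{X}; J) \leq N \cdot I(X_1; J)$ and the identity $I(X_1; J) = \frac{1}{m}\sum_{j} d_{KL}(f^j \,\|\, \bar{f})$ with $\bar{f} = \frac{1}{m}\sum_j f^j$, I would apply the upper inequality in Lemma \ref{lemma2 in shrotriya} (valid since $\bar{f} \in \Fcal_B^{[\alpha,\beta]}$ by convexity) to obtain $d_{KL}(f^j \,\|\, \bar{f}) \leq \tfrac{1}{\alpha}\|f^j - \bar{f}\|_2^2$. The key calculation is then the bias--variance identity
\[
\frac{1}{m}\sum_{j=1}^m \|f^j - \bar{f}\|_2^2 \;=\; \frac{1}{m}\sum_{j=1}^m \|f^j - f_0\|_2^2 \;-\; \|\bar{f} - f_0\|_2^2 \;\leq\; \tau^2,
\]
where the last inequality uses that each $f^j$ lies in $\B_2(f_0,\tau)$. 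Combining these yields $I(\mathbf{X}; J) \leq N\tau^2/\alpha$.

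Finally, I would plug this bound into Fano. Under the hypothesis $\log \Mcal_\Fcal^{\loc}(\tau,c) > 2N\tau^2/\alpha + 2\log 2$, we have
\[
\frac{I(\mathbf{X}; J) + \log 2}{\log m} \;\leq\; \frac{N\tau^2/\alpha + \log 2}{2N\tau^2/\alpha + 2\log 2} \;=\; \frac{1}{2},
\]
so the parenthesized factor in Fano's bound is at least $1/2$, giving the claimed $\tau^2/(8c^2)$. The only step that is not completely routine is recognizing that the localization in $\B_2(f_0,\tau)$ is exactly what lets one replace the crude $(2\tau)^2$ diameter bound by the sharper $\tau^2$ bound coming from the variance identity; without this refinement one would lose a factor of $4$ and the condition stated in the lemma would not suffice.
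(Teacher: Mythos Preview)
Your proof is correct and follows the same overall architecture as the paper's: pick a local $\tau/c$-packing inside $\B_2(f_0,\tau)\cap\Fcal$, apply Fano, and bound the mutual information by $N\tau^2/\alpha$ via the KL--$L_2$ equivalence of Lemma~\ref{lemma2 in shrotriya}. The one difference is in how the mutual information is controlled. You center at the mixture $\bar f$ and invoke the variance decomposition $\tfrac{1}{m}\sum_j\|f^j-\bar f\|_2^2 = \tfrac{1}{m}\sum_j\|f^j-f_0\|_2^2 - \|\bar f-f_0\|_2^2 \le \tau^2$; the paper instead uses the standard inequality $I(X_1;J)\le \tfrac{1}{m}\sum_j d_{KL}(f^j\|\theta)$ for \emph{any} reference $\theta$ (equation (15.52) in \cite{wainwright2019high}), takes $\theta=f_0$, and bounds each term directly by $\tfrac{1}{\alpha}\|f^j-f_0\|_2^2\le \tau^2/\alpha$. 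Both routes land at the same bound, but the paper's choice of centering at the ball center $f_0$ makes the variance identity unnecessary, so your closing remark that this refinement is what rescues the factor of~$4$ is a bit misleading---the simpler centering already avoids it.
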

\begin{remark}
    In the proof of \cite[Lemma 6]{shrotriya2023revisiting}, there is a small typo. We will fix it in the appendix.
\end{remark}

Next, we establish a new minimax lower bound, drawing inspiration from the work of \cite{chen2017robustcovariancescattermatrix}. This lower bound is specifically designed to handle corruption schemes. Furthermore, in the following lemma, we demonstrate that this lower bound is tight up to constant factors.

\begin{lemma}[Lower bound in corruption schemes]\label{new lower bound}
Consider the corruption mechanism \(\mathcal{C}\) in the core problem setting: for a fixed \(\epsilon \in [0, \frac{1}{3}]\), a fraction \(\epsilon\) of the observations in \(\mathbf{\tilde{X}}\) are arbitrarily corrupted by some procedure \(\mathcal{C}\). Let \(\mathbf{X} = \mathcal{C}(\mathbf{\tilde{X}})\), where the \(i\)th coordinate is \(X_i = \mathcal{C}(\tilde{X}_i)\) if it is corrupted; otherwise, \(X_i = \tilde{X}_i\). Suppose $\epsilon \geq \frac{k}{N}$ with a constant $k > 7$, then the following lower bound holds:
$$
\inf_{\hat{f}} \sup_{f \in \Fcal} \sup_{\Ccal} \Ebb_f \left\|\hat{f}\left(\Ccal(\mathbf{\tilde{X}})\right) - f \right\|_2^2 \gtrsim \xi(\epsilon) \wedge d^2,
$$
where \(\xi(\epsilon) := \max_{f_1, f_2 \in \mathcal{F}, \, \TV(P_1, P_2) \leq \frac{\epsilon'}{1 - \epsilon'}} \{\|f_1 - f_2\|_2^2\}\), with $\epsilon' = \epsilon - \frac{1}{N}$, \(f_1\) and \(f_2\) denoting the densities of distributions \(P_1\) and \(P_2\), respectively. Furthermore, \(\xi(\epsilon) \lesssim \epsilon\). And $\xi(\epsilon) \gtrsim \epsilon^2$ when $\epsilon \lesssim \max\{d_\TV, d_{L_2}^2\}$, where \(d_{\TV}\) denotes the diameter of \(\mathcal{F}\) with respect to the total variation (TV) distance and \(d_{L_2}\) denotes the diameter of \(\mathcal{F}\) with respect to the \(L_2\)-norm.
\end{lemma}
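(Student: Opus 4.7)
The plan is to establish the main lower bound via a two-point Le~Cam argument coupled with an adversarial coupling construction, and then to handle the two auxiliary bounds $\xi(\epsilon)\lesssim \epsilon$ and $\xi(\epsilon)\gtrsim \epsilon^2$ by direct inequalities. First I would select $f_1, f_2 \in \Fcal$ (nearly) attaining the supremum in the definition of $\xi(\epsilon)$, so that $\TV(P_1, P_2) \leq \epsilon'/(1-\epsilon')$ and $\|f_1-f_2\|_2^2$ is arbitrarily close to $\xi(\epsilon)$. The constraint on $\TV$ is exactly the threshold under which one can decompose
\[
(1-\epsilon') P_1 + \epsilon' G_1 \;=\; (1-\epsilon') P_2 + \epsilon' G_2 \;=:\; Q
\]
for bona fide probability measures $G_1, G_2$ (by splitting $P_1-P_2$ into its Jordan parts and topping up with a common piece), which is the standard Huber-style coupling at the heart of Chen, Gao and Ren's lower bounds.

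Next, I would convert this population-level coupling into a sample-level adversarial strategy with an exact corruption budget of $\lfloor \epsilon N\rfloor$. Starting from $\tilde{\mathbf X}^{(j)}\stackrel{iid}{\sim} P_j$, the maximal coupling pairs up coordinates so the expected Hamming distance is at most $N\,\TV(P_1,P_2) \leq N\epsilon'/(1-\epsilon')$. The slack $\epsilon - \epsilon' = 1/N$, together with $\epsilon N \geq k > 7$, controls the Binomial fluctuations around this mean so that on an event of probability bounded away from $0$ one can align $\Ccal_1(\tilde{\mathbf X}^{(1)})$ with $\Ccal_2(\tilde{\mathbf X}^{(2)})$ by modifying at most $\lfloor \epsilon N\rfloor$ coordinates on each side; outside that event the adversary defaults to leaving samples untouched, and the resulting contribution to $\TV(Q_1^{\otimes N}, Q_2^{\otimes N})$ is bounded by a constant strictly less than $1$. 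Applying the standard two-point inequality
\[
\inf_{\hat f}\max_{j\in\{1,2\}} \Ebb_{f_j}\|\hat f - f_j\|_2^2 \;\geq\; \tfrac{1}{4}\|f_1-f_2\|_2^2\bigl(1 - \TV(Q_1,Q_2)\bigr)
\]
yields $\gtrsim \|f_1-f_2\|_2^2 \asymp \xi(\epsilon)$. The $\wedge d^2$ is automatic since $\xi(\epsilon)\leq d^2$ as $f_1,f_2\in\Fcal$.

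For the auxiliary bound $\xi(\epsilon)\lesssim \epsilon$, Hölder's inequality combined with $f_1,f_2\in[\alpha,\beta]$ gives
\[
\|f_1-f_2\|_2^2 \;\leq\; \|f_1-f_2\|_\infty\,\|f_1-f_2\|_1 \;\leq\; 2\beta\cdot 2\,\TV(P_1,P_2) \;\lesssim\; \epsilon,
\]
using $\TV(P_1,P_2)\leq \epsilon'/(1-\epsilon')\lesssim \epsilon$ when $\epsilon\leq 1/3$. For $\xi(\epsilon)\gtrsim \epsilon^2$ under $\epsilon\lesssim \max\{d_\TV, d_{L_2}^2\}$, I would invoke Lemma~1.3 of \cite{AkshayPrasadan2024} (mentioned earlier in the excerpt) to exhibit $f_1,f_2\in\Fcal$ with $\TV(P_1,P_2)\asymp \epsilon'/(1-\epsilon')\asymp \epsilon$, and then use the fact that $\mu$ is a probability measure to get $\|f_1-f_2\|_2\geq \|f_1-f_2\|_1 = 2\,\TV(P_1,P_2)\gtrsim \epsilon$, hence $\xi(\epsilon)\gtrsim \epsilon^2$.

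The main obstacle is Step~2: promoting the population-level Huber mixture decomposition into a coupling that respects the hard constraint of exactly $\lfloor \epsilon N\rfloor$ corruptions per sample path, rather than a Bernoulli$(\epsilon')$ number of them. The numerical condition $k>7$ appears precisely to give enough multiplicative headroom between $\epsilon'N$ and $\epsilon N$ so that a Binomial concentration (or a small-deviation / Markov argument) leaves positive probability for the coupled coordinates to fit inside the budget, ensuring $\TV(Q_1,Q_2)$ stays bounded strictly below $1$. All remaining steps are routine once this coupling is in place.
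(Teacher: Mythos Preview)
Your overall plan coincides with the paper's: a Le~Cam two-point argument based on the Huber identity $(1-\epsilon'')P_1+\epsilon''G_1=(1-\epsilon'')P_2+\epsilon''G_2$, followed by a binomial-tail step to pass from a soft Bernoulli contamination count to the hard budget $\lfloor\epsilon N\rfloor$; the side inequalities $\xi(\epsilon)\lesssim\epsilon$ and $\xi(\epsilon)\gtrsim\epsilon^2$ are obtained in both places from $\|f_1-f_2\|_2^2\le\|f_1-f_2\|_\infty\|f_1-f_2\|_1\lesssim\TV$ and $\|f_1-f_2\|_2\ge\|f_1-f_2\|_1=2\,\TV$, respectively.

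The one place where your sketch would fail as written is the quantitative slack in Step~2. You propose to rely only on the additive gap $\epsilon-\epsilon'=1/N$. With Huber weight $\epsilon'$ the relevant overflow probability is $\PP\bigl(\mathrm{Bin}(N,\epsilon')>\epsilon' N+1\bigr)$, which by the CLT tends to $1/2$ as $N\epsilon'\to\infty$; hence the ratio $\mu_{\TV}=\PP(\text{overflow})/\PP(\text{no overflow})$ is not bounded away from $1$ uniformly, and the Le~Cam factor $(1-\mu_{\TV})$ degenerates. Your alternative phrasing via the coordinatewise maximal coupling of $P_1,P_2$ is worse still: the expected Hamming distance $N\,\TV(P_1,P_2)\le N\epsilon'/(1-\epsilon')$ can already exceed $\epsilon N$ once $\epsilon\gtrsim 1/\sqrt N$, so the ``good'' event need not even have probability near $1/2$. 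The paper instead builds in a \emph{multiplicative} slack: it works with a weight $\epsilon''\in(3/N,\epsilon/2)$, truncates the binomial expansion of the $N$-fold mixture at threshold $2\epsilon'' N\le\epsilon N$, and applies the Chernoff bound $\PP\bigl(\mathrm{Bin}(N,\epsilon'')>2\epsilon'' N\bigr)\le\exp\bigl(-N\,D(2\epsilon''\Vert\epsilon'')\bigr)\le e^{-0.9}$, which is uniform in $N$. The nonemptiness of $(3/N,\epsilon/2)$ together with $\epsilon''\le\epsilon'$ is exactly where the hypothesis $\epsilon\ge k/N$ with $k>7$ is used.
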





\begin{corollary}\label{corollary for new lower bound}
    Suppose \(\epsilon \geq \frac{k}{N}\) with a constant \(k > 7\). If \(\xi(\epsilon) \gtrsim \epsilon\), i.e., Condition \ref{Condition: L_2 and TV} holds, then the following lower bound holds:
$$
\inf_{\hat{f}} \sup_{f \in \Fcal} \sup_{\Ccal} \Ebb_f \left\|\hat{f}\left(\Ccal(\mathbf{\tilde{X}})\right) - f \right\|_2^2 \gtrsim \epsilon \wedge d^2.
$$
\end{corollary}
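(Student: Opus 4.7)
The plan is to derive this corollary as an essentially immediate consequence of Lemma \ref{new lower bound}. That lemma already supplies, under the hypothesis $\epsilon \geq k/N$ with $k > 7$, the bound
\[
\inf_{\hat{f}} \sup_{f \in \Fcal} \sup_{\Ccal} \Ebb_f \left\|\hat{f}(\Ccal(\mathbf{\tilde{X}})) - f\right\|_2^2 \gtrsim \xi(\epsilon) \wedge d^2,
\]
so all that remains is to convert the $\xi(\epsilon)$ factor into $\epsilon$ by invoking Condition \ref{Condition: L_2 and TV}.

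First I would restate the hypothesis: since $\epsilon \geq k/N$, Condition \ref{Condition: L_2 and TV} applies and gives $\xi(\epsilon) \geq c\epsilon$ for some absolute constant $c > 0$. Substituting into the Lemma \ref{new lower bound} bound, the only nontrivial manipulation is the elementary inequality
\[
\xi(\epsilon) \wedge d^2 \;\gtrsim\; \epsilon \wedge d^2,
\]
which I would verify by a short case analysis on whether $c\epsilon \leq d^2$ or $c\epsilon > d^2$: in the first case $\xi(\epsilon) \wedge d^2 \geq c\epsilon = c(\epsilon \wedge d^2)$; in the second case both minima equal $d^2$ up to the constant $c$. Either way the inequality holds up to the proportionality constant $c$, which gets absorbed into $\gtrsim$.

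I do not anticipate any genuine obstacle here, as all the conceptual content has already been handled in the proof of Lemma \ref{new lower bound}. The one point worth noting is that the truncation by $d^2$ cannot be removed: since Lemma \ref{new lower bound} also establishes $\xi(\epsilon) \lesssim \epsilon$, and in fact $\xi(\epsilon)$ is always bounded above by $d^2$ by definition, the minimum with $d^2$ on the right-hand side is genuinely needed when $\epsilon$ exceeds $d^2$. With this remark in place the corollary follows in one line from the lemma and the condition.
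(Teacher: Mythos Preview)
Your proposal is correct and matches the paper's (implicit) approach: the corollary is not given a separate proof in the paper and is simply the combination of Lemma~\ref{new lower bound} with Condition~\ref{Condition: L_2 and TV}, exactly as you outline. Your case analysis for $\xi(\epsilon)\wedge d^2 \gtrsim \epsilon\wedge d^2$ is the right way to make the one-line deduction precise.
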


\subsection{Upper Bound}\label{Upper Bound Section}

Now, we turn to the upper bound. Let $X_{1}, \ldots, X_{N}\stackrel{\mbox{\scriptsize i.i.d.}}{\sim} f \in \mathcal{F}$ represent the $N$ observed (corrupted) samples and define $\mathbf{X} := (X_{1}, \ldots, X_{N})^{\top}$. Define $k := 1/(3\epsilon)$, where $\epsilon \leq \frac{1}{3}$ is the corruption rate.We divide the data into \(N/k = 3\epsilon N\) groups (assuming, for simplicity, that \(N/k\) is an integer) and denote the sets of indices for the data in each group by \(G_1, \dots, G_{N/k}\). We establish a criterion to compare two densities $g, g' \in \mathcal{F}$:
\begin{equation}\label{group log likelihood difference}
    \psi(g, g^{\prime}, \mathbf{X}) := \mathbbm{1}\left(\sum_{j =1}^{N/k} \mathbbm{1}\left(\sum_{i \in G_j} \log \frac{g(X_i)}{g^{\prime}(X_i)} > 0\right) \geq \frac{N}{2k}\right).
\end{equation}

If $\psi(g, g^{\prime}, \mathbf{X}) = 1$, then we say $g' \prec g$. At a high level, this means that we partition the data into $N/k$ groups. We consider $g$ to be ‘better’ than $g'$ if, in more than half of the groups, the difference in log-likelihoods between $g$ and $g'$ is greater than $0$, indicating that $g$ performs better than $g'$ in a majority of the groups. Note that there are at most $\epsilon N$ groups having corrupted data.

\begin{remark}
    If we let $k = \frac{1}{(2 + \gamma)\epsilon}$, where $\gamma > 0$ and $k$ is an integer, then it is possible to tolerate more than $1/3$ of the observations being corrupted.
\end{remark}

In fact, our universal estimator over \(\mathcal{F}\) is constructed as a likelihood-based estimator for \(f\). In particular:

\begin{remark}\label{nrmk:log-likelihood-well-defined}
    The log-likelihood difference \(\psi(g, g^{\prime}, \mathbf{X})\) in \eqref{group log likelihood difference} is well-defined. This is because, for each \(i \in [1, N]\), the individual random variables \(\log \frac{g(X_i)}{g^{\prime}(X_i)}\) are well-defined (as \(\alpha > 0\)) and bounded. Specifically, 
    \[
    -\infty < \log \frac{\alpha}{\beta} \leq \log \frac{g(X_i)}{g^{\prime}(X_i)} \leq \log \frac{\beta}{\alpha} < \infty,
    \]
    for each \(i \in [1, N]\).
\end{remark}

\begin{lemma}[Group log-likelihood-form difference concentration in $\Fcal$]\label{likelihood upper bound}
        Let $\delta > 0$ be arbitrary but assumed to be bigger than $\sqrt{\epsilon}$ up to absolute constant factors,  and let $X_{1}, \ldots, X_{N}\stackrel{\mbox{\scriptsize i.i.d.}}{\sim} f \in \mathcal{F}$ represent the $N$ observed (corrupted) samples and define $\mathbf{X} := (X_{1}, \ldots, X_{N})^{\top}$.
        For any two densities $g,
        g^{\prime} \in \Fcal$, let 
        $$\psi(g, g^{\prime}, \mathbf{X}) = \mathbbm{1}\left(\sum_{j =1}^{N/k} \mathbbm{1}\left(\sum_{i \in G_j} \log \frac{g(X_i)}{g^{\prime}(X_i)} > 0\right) \geq \frac{N}{2k}\right).$$ 

        We then have
    \begin{equation}
        \sup_{\substack{g, g^{\prime} \in \Fcal \colon \|g - g^{\prime}\|_{2} \geq C\delta, \\
                \|g^{\prime}-f\|_{2} \leq \delta}}\PP(\psi(g, g^{\prime}, \mathbf{X}) = 1)
        \leq
        \exp(-N C_{10}\delta^2),
    \end{equation}
    where
    \begin{align}
        C_{10} 
        & = \frac{L(\alpha, \beta, C)}{4} (\tfrac{1}{2}-\tfrac{\phi}{3}),\\
        C
         & >
        1 + \sqrt{1 / (\alpha c(\alpha,\beta))},
         \\
        L(\alpha, \beta, C)
         & :=
        \frac{\left\{\sqrt{c(\alpha,\beta)} (C-1)  -  \sqrt{1 / \alpha}\right\}^{2}
        }{2( 2 K(\alpha, \beta) +\frac{2}{3} \log \beta / \alpha)},
    \end{align}
    with $K(\alpha, \beta) := \beta / (\alpha^{2} c(\alpha, \beta))$, and
    $c(\alpha, \beta)$ is as defined in Lemma \ref{lemma2 in shrotriya}. In the
    above $\PP$ is taken with respect to the true density function $f$, i.e.,
    $\PP = \PP_f$.
\end{lemma}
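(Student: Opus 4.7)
The plan is to apply a per-group Bernstein inequality on each fully clean group and then combine across groups with a binomial tail bound, pessimistically assuming every contaminated group votes against us. Let $Y_i := \log(g(X_i)/g'(X_i))$, noting $|Y_i| \leq \log(\beta/\alpha)$ by Remark \ref{nrmk:log-likelihood-well-defined}. Since $k = 1/(3\epsilon)$ and the number of groups is $M := N/k = 3\epsilon N$, at most $\epsilon N = M/3$ groups can be touched by $\mathcal{C}$, leaving at least $2M/3$ fully clean groups of $k$ i.i.d.\ samples from $f$.

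The first step bounds the mean of $Y_i$ on clean data: by Lemma \ref{lemma2 in shrotriya},
\[
\Ebb_f Y_i = d_{KL}(f\|g') - d_{KL}(f\|g) \leq \delta^2/\alpha - c(\alpha,\beta)\|f-g\|_2^2,
\]
and the reverse triangle inequality gives $\|f-g\|_2 \geq (C-1)\delta$, so $\mu := -\Ebb_f Y_i \geq \bigl(\sqrt{c(\alpha,\beta)}(C-1) - \sqrt{1/\alpha}\bigr)^2 \delta^2$, using the elementary inequality $a^2 - b^2 \geq (a-b)^2$ for $a \geq b \geq 0$ together with the hypothesis $C > 1 + \sqrt{1/(\alpha c(\alpha,\beta))}$. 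The second step bounds the variance by $\operatorname{Var}_f(Y_i) \leq 2K(\alpha,\beta)\mu$: use the mean value estimate $|\log(g/g')| \leq |g-g'|/\alpha$ (valid because $g,g' \geq \alpha$) to get $\Ebb_f Y_i^2 \leq (\beta/\alpha^2)\|g-g'\|_2^2$, expand $\|g-g'\|_2^2 \leq 2\|g-f\|_2^2 + 2\delta^2 \leq 2(\mu + \delta^2/\alpha)/c(\alpha,\beta) + 2\delta^2$ via Lemma \ref{lemma2 in shrotriya} and the identity $d_{KL}(f\|g) = \mu + d_{KL}(f\|g')$, and absorb the stray $\delta^2$ terms into $\mu$ using the first-step lower bound $\delta^2 \leq \mu/(c(\alpha,\beta)(C-1)^2 - 1/\alpha)$. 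One-sided Bernstein applied to the $k$ i.i.d.\ clean samples in a group $G_j$ then yields
\[
\PP\Bigl(\sum_{i \in G_j} Y_i > 0\Bigr) \leq \exp\Bigl(-\tfrac{k\mu}{2(2K(\alpha,\beta) + \frac{2}{3}\log(\beta/\alpha))}\Bigr) \leq \exp\bigl(-k L(\alpha,\beta,C)\delta^2\bigr).
\]

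To combine across groups, observe that for $\psi = 1$ at least $M/2$ groups must vote positively, so even when all $\leq M/3$ contaminated groups do, at least $M/6$ of the $\geq 2M/3$ fully clean groups must follow, a fraction $q \geq 1/4$. A binomial Chernoff bound with per-group failure probability $p \leq \exp(-k L(\alpha,\beta,C)\delta^2)$ yields $\PP(\psi = 1) \leq \exp(-M' D(q\|p))$ for $M' \geq 2M/3$; the hypothesis $\delta \gtrsim \sqrt{\epsilon}$ forces $k L \delta^2 = L\delta^2/(3\epsilon)$ to be bounded away from zero, so $D(q\|p) \gtrsim q \cdot k L \delta^2$, with the remaining logarithmic slack absorbed into the $(\tfrac{1}{2} - \tfrac{\phi}{3})$ factor in $C_{10}$. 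Multiplying by $M' \asymp \epsilon N$ and using $kM' \asymp N$ recovers the claimed $\exp(-N C_{10} \delta^2)$.

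The main technical obstacle is the variance-to-mean bound $\operatorname{Var}_f(Y_i) \lesssim \mu$: without it, a pure Hoeffding bound relying only on $|Y_i| \leq \log(\beta/\alpha)$ gives a per-group exponent of order $k\mu^2 \asymp k\delta^4$, which after the group combination yields at best $\exp(-N\delta^4/\epsilon)$, too weak as soon as $\delta^2$ exceeds $\epsilon$. The desired variance bound is secured only by carefully chaining the KL-$L_2$ equivalence of Lemma \ref{lemma2 in shrotriya} with the triangle inequality and using the mean lower bound to absorb $\delta^2$ into $\mu$. A secondary, milder difficulty is the binomial-tail step itself: the hypothesis $\delta \gtrsim \sqrt{\epsilon}$ is essential, since it ensures $p$ is small enough for the Chernoff bound to decay at the desired $N\delta^2$-rate rather than the weaker $N\epsilon$-rate one would obtain if $p$ were only a small constant.
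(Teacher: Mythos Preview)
Your proposal follows essentially the same route as the paper: a per-group Bernstein bound (which the paper imports as Lemma~7 of \cite{shrotriya2023revisiting} while you re-derive it) followed by a binomial Chernoff step after ceding all contaminated groups to the adversary. The one point worth tightening is that the paper runs the Chernoff bound on the uncorrupted-data events $A_j := \{\sum_{i\in G_j}\log(g(\tilde X_i)/g'(\tilde X_i))>0\}$ over \emph{all} $M=3\epsilon N$ groups, which are genuinely i.i.d., rather than over the $\geq 2M/3$ ``clean'' groups whose identity the adaptive adversary may choose after seeing the data; bounding $\#\{j\text{ clean}:A_j\}\leq\#\{j:A_j\}\sim\mathrm{Bin}(M,p)$ closes this gap and is exactly the paper's maneuver, after which their detailed KL computation with the auxiliary parameter $\phi$ pins down the stated constant $C_{10}$.
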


From Lemma \ref{likelihood upper bound}, we derive a key concentration result concerning a packing set in $\mathcal{F}$, as summarized in Lemma \ref{concentration likelihood}. Our estimator will be constructed using packing sets of $\mathcal{F}$, making Lemma \ref{concentration likelihood} an important tool for obtaining an upper bound for our minimax rate.

\begin{lemma}[Maximum likelihood concentration in $\Fcal$]\label{concentration likelihood}
Let $\nu_1,\dots,\nu_M$ be a maximal $\delta$-packing (covering) set of $\Fcal'\subseteq \Fcal$ with $f \in \Fcal'$ and $\delta \ge C_1\sqrt{\epsilon}$. Let $i^* \in \argmin_i T_i$, where 
\begin{align}
    T_i = \begin{cases}\max_{j \in E_i} \|\nu_i - \nu_j\|_2, & \text{ if \ } E_i := \left\{j\in [M]: \nu_i \prec \nu_j, \|\nu_i - \nu_j\|_2 \geq C \delta\right\} \text{ is not empty},\\
    0, & \mbox { otherwise}.
    \end{cases}
\end{align} 
Here, we denote $\nu_i \prec \nu_j$ as $\psi(\nu_j, \nu_i, \mathbf{X}) = 1$.

Then 
\begin{align*}
    \PP\left( \|\nu_{i^*} - f\|_2 \geq (C + 1) \delta\right) \leq M \exp\bigg( - C_{10} N \delta^2\bigg).
\end{align*}
We refer to \(\nu_{i^*}\) as the `best' density among \(\nu_1, \dots, \nu_M\).
\end{lemma}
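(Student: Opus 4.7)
The plan is to exploit packing--covering duality. Because $\{\nu_1, \dots, \nu_M\}$ is a maximal $\delta$-packing of $\Fcal' \subseteq \Fcal$, it is also a $\delta$-cover, and since $f \in \Fcal'$ there is an ``anchor'' index $j_0 \in [M]$ with $\|\nu_{j_0} - f\|_2 \leq \delta$. I will show that with high probability (i) no far-away $\nu_\ell$ can beat $\nu_{j_0}$, forcing $T_{j_0} = 0$ and hence $T_{i^*} = 0$, and (ii) conversely $\nu_{j_0}$ does beat every far-away $\nu_i$; together these pin $\nu_{i^*}$ within $(C+1)\delta$ of $f$.

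For step (i), for each $\ell \in [M]$ with $\|\nu_\ell - \nu_{j_0}\|_2 \geq C\delta$, I apply Lemma~\ref{likelihood upper bound} with $g = \nu_\ell$ and $g' = \nu_{j_0}$: the hypotheses $\|g - g'\|_2 \geq C\delta$ and $\|g' - f\|_2 \leq \delta$ hold by construction, giving
\[
\PP(\nu_{j_0} \prec \nu_\ell) \;=\; \PP(\psi(\nu_\ell, \nu_{j_0}, \mathbf{X}) = 1) \;\leq\; \exp(-N C_{10} \delta^2).
\]
Step (ii) is the ``win'' direction: the group-wise Bernstein argument underlying Lemma~\ref{likelihood upper bound} actually shows that, under the same hypotheses, the positive mean of $\log(\nu_{j_0}/\nu_\ell)(X)$ (of order $\delta^2$) makes $\sum_{i \in G_j} \log(\nu_{j_0}/\nu_\ell)(X_i) > 0$ in a strict majority of the at-least-$2/3$ uncorrupted groups with probability $\geq 1 - \exp(-N C_{10}\delta^2)$, i.e., $\PP(\nu_\ell \prec \nu_{j_0}) \geq 1 - \exp(-N C_{10}\delta^2)$. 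A union bound over the (at most) $M$ such indices produces an event $\Omega$ with $\PP(\Omega^c) \leq M \exp(-N C_{10}\delta^2)$ on which both (i) and (ii) hold for every $\ell$ with $\|\nu_\ell - \nu_{j_0}\|_2 \geq C \delta$.

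On $\Omega$, property (i) gives $E_{j_0} = \emptyset$ and hence $T_{j_0} = 0$; since $i^* \in \argmin_i T_i$ and each $T_i \geq 0$, this forces $T_{i^*} = 0$ and $E_{i^*} = \emptyset$. Now suppose for contradiction that $\|\nu_{i^*} - f\|_2 \geq (C+1)\delta$: the triangle inequality yields $\|\nu_{i^*} - \nu_{j_0}\|_2 \geq C\delta$, and then property (ii) with $\ell = i^*$ gives $\nu_{i^*} \prec \nu_{j_0}$, placing $j_0 \in E_{i^*}$ and contradicting $E_{i^*} = \emptyset$. Hence on $\Omega$ we must have $\|\nu_{i^*} - f\|_2 < (C+1)\delta$, and consequently
\[
\PP\bigl(\|\nu_{i^*} - f\|_2 \geq (C+1)\delta\bigr) \;\leq\; \PP(\Omega^c) \;\leq\; M \exp(-N C_{10} \delta^2).
\]

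The main obstacle is the asymmetry between ``$\nu_\ell$ beats $\nu_{j_0}$'' and ``$\nu_{j_0}$ beats $\nu_\ell$'': Lemma~\ref{likelihood upper bound} as stated only bounds the probability of the former, whereas step (ii) requires a lower bound on the probability of the latter. The cleanest route is to revisit the group-wise Bernstein step inside the proof of Lemma~\ref{likelihood upper bound} and observe that it actually certifies the ``win'' direction under the same hypotheses; alternatively, if the group-wise log-likelihood sums are atomless one may simply invoke the almost-sure complementarity of $\{\psi(\nu_\ell, \nu_{j_0}, \mathbf{X}) = 0\}$ and $\{\psi(\nu_{j_0}, \nu_\ell, \mathbf{X}) = 1\}$, at the cost of at most an absorbed constant factor in the final union bound.
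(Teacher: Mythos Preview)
Your argument is correct, but the paper takes a shorter route that sidesteps your step~(ii) entirely. The key observation in the paper is purely deterministic: whenever $\|\nu_i - \nu_j\|_2 \geq C\delta$, the near-complementarity of the majority-vote test forces either $j \in E_i$ or $i \in E_j$, hence $\max(T_i, T_j) \geq \|\nu_i - \nu_j\|_2$. Applying this with $i = i^*$ and $j = j_0$, and using $T_{i^*} \leq T_{j_0}$ by minimality, one gets
\[
\mathbbm{1}\{\|\nu_{i^*} - \nu_{j_0}\|_2 \geq C\delta\} \;\leq\; \mathbbm{1}\{\max(T_{i^*}, T_{j_0}) \geq C\delta\} \;=\; \mathbbm{1}\{T_{j_0} \geq C\delta\} \;=\; \mathbbm{1}\{E_{j_0} \neq \emptyset\},
\]
and the last event is controlled by your step~(i) alone via the union bound and Lemma~\ref{likelihood upper bound}. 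The triangle inequality $\|\nu_{i^*} - f\|_2 \leq \|\nu_{i^*} - \nu_{j_0}\|_2 + \delta$ then finishes.

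So the paper uses only the ``lose'' direction of the concentration lemma plus the same complementarity you invoke at the very end as an alternative; you instead promote complementarity (or a reopened Bernstein step) into a separate ``win'' direction and argue by contradiction on a good event. The two are logically equivalent, but the paper's deterministic $\max(T_i,T_j)$ inequality is the cleaner packaging: it removes any need to revisit the internals of Lemma~\ref{likelihood upper bound} and cuts the union bound in half.
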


Combining our criterion with the techniques from \cite{AkshayPrasadan2024} and \cite{shrotriya2023revisiting}, we now introduce the details of our algorithm to construct an estimator for $f \in \mathcal{F}$, as presented in Algorithm \ref{alg:sieve_estimator_tree}.

We will use the notation from \cite{AkshayPrasadan2024}. Given a node (i.e., element) \(u \in G\) for some set \(G\), we define the parent set $\mathcal{P}(u)$ as the set of nodes $u'$ with a directed edge from $u'$ to $u$ ($u' \rightarrow u$). We refer to a node $u$ as an offspring of a parent node $v$ if $v \in \mathcal{P}(u)$. For a node $v$, let $\mathcal{O}(v)$ denote the set of all offspring of $v$, i.e., $\mathcal{O}(v) = \{ u : v \in \mathcal{P}(u) \}$. We will call any $q \in \Fcal$ a point.

For our algorithm, we will first construct a pruned tree for the class $\mathcal{F}$ based on packing sets before observing any data. Then, we will traverse this tree from top to bottom.

Specifically, we first arbitrarily select a point $\nu_1 \in \mathcal{F}$ as our root node. We also have some sufficiently large $c > 0$, which is independent of the data, and define $C := \frac{c}{2} - 1$. The definition of $\Bar{J}$ is given in Theorem \ref{upper bound}. We fix some $\tilde{J} \geq \Bar{J}$. If $\tilde{J} = 1$, then this tree will consist of a single node. If $\tilde{J} = 2$, we treat $\nu_1$ as the first level of the tree, and then construct a maximal $\frac{d}{c}$-packing set of $\B_2(\nu_1, d) \cap \mathcal{F} = \mathcal{F}$. A directed edge is drawn from $\nu_1$ to each of these points to form level 2. Denote $\mathcal{L}(2)$ as the set of nodes forming the graph at level $2$. Otherwise, we continue the following steps until we have constructed the $\tilde{J}$th level of the tree:

For level $j \geq 3$, assuming we already have $\mathcal{L}(j-1)$, where $\mathcal{L}(j)$ denotes the set of nodes forming the pruned graph at level $j$ for $j > 2$, we iterate through each point $q$ in $\mathcal{L}(j-1)$. For each $q$, we construct a maximal $\frac{d}{2^{j-1}c}$-packing set of $\B_2(q, \frac{d}{2^{j-2}}) \cap \mathcal{F}$ and draw directed edges from $q$ to its associated packing set points. These newly generated points are denoted as candidate points in level $j$. Then, we apply a pruning step. Lexicographically order these candidate points at level $j$, i.e., $q^j_1, \dots, q^j_{M_j}$, where $M_j$ is the total number of these candidate points. Construct an ordered set $\Qcal_j = [q^j_1, \dots, q^j_{M_j}]$. While $\Qcal_j$ is not empty, pick the first element in $\Qcal_j$, say $q^j_l$. Construct a set $\Tcal_j(q^j_l) := \{q^j_k \in \Qcal_j : \|q^j_l - q^j_k\|_2 \leq \frac{d}{2^{j-1}c}, l \neq k\}$. For each $q^j_k \in \Tcal_j(q^j_l)$, remove the directed edge from $\Pcal(q^j_k)$ to $q^j_k$ and add a directed edge from $\Pcal(q^j_k)$ to $q^j_l$. Then, remove $q^j_k$ from the tree. Once finished iterating through the elements in $\Tcal_j(q^j_l)$, remove $\{q^j_l\} \cup \Tcal_j(q^j_l)$ from $\Qcal_j$. Once $\Qcal_j$ is empty, the resulting offspring nodes of points from level $j-1$ form level $j$, denoted as $\Lcal(j)$.

We continue this process until we have constructed the $\tilde{J}$th level of the tree.

Take the procedure of constructing $\Lcal(3)$ as an example: for each point $q$ in $\Lcal(2)$, we construct a maximal $\frac{d}{4c}$-packing set of $\B_2(q, \frac{d}{2}) \cap \Fcal$ and draw directed edges from $q$ to its corresponding maximal packing set points. After iterating through all the points in $\Lcal(2)$, lexicographically order these candidate points, i.e., $q^3_1, \dots, q^3_{M_3}$. Construct an ordered set $\Qcal_3 = [q^3_1, \dots, q^3_{M_3}]$. Assuming $\Qcal_3$ is not empty, consider the first element $q^3_1 \in \Qcal_3$. Construct a set $\Tcal_3(q^3_1) := \{q^3_k \in \Qcal_3 : \|q^3_1 - q^3_k\|_2 \leq \frac{d}{4c}, k \neq 1\}$, which may be empty. If $q^3_k \in \Tcal_3(q^3_1)$ for some $k$, then it cannot share the same parent as $q^3_1$ because the offspring of $\Pcal(q^3_1)$ form a $\frac{d}{4c}$-packing set. For each $q^3_k \in \Tcal_3(q^3_1)$, remove the directed edge from $\Pcal(q^3_k)$ to $q^3_k$ and add a directed edge from $\Pcal(q^3_k)$ to $q^3_1$. Now these $q^3_k$'s are points with no edges connected to them (meaning they are longer points in the tree graph). Remove $\{q^3_1\} \cup \Tcal_3(q^3_1)$ from $\Qcal_3$. Keep performing these pruning steps until $\Qcal_3$ is empty. Level $3$ nodes are then the set of offspring of nodes from level $2$, denoted as $\Lcal(3)$.

Next, we traverse this tree structure from the top layer to the bottom. We already have $\nu_1$. For the $j$th layer, we select the `best' density according to the criterion in Lemma \ref{concentration likelihood} from the offspring of $\nu_j$, based on the corrupted data $\mathbf{X}$ (note that this is the first time the algorithm encounters the data), and designate this density as $\nu_{j+1}$.

Finally, we set $\nu^* := \nu_{\tilde{J}}$ as the output of our algorithm.

\begin{algorithm}
\caption{Pruned tree structure construction of the multistage sieve estimator, $\nu^*(X)$, of $f \in \mathcal{F}$}
\label{alg:sieve_estimator_tree}
\textbf{Input:} A corrupted data $\mathbf{X} := (X_1, \dots, X_N)^T$ with corruption rate $\epsilon \in [0, \frac{1}{3}]$. Some sufficiently large $c > 0$ which is independent from the data and define $C := \frac{c}{2} - 1$. An integer number $\Bar{J}$ as defined in Theorem \ref{upper bound}. A root node $\nu_1$ randomly picked in $\mathcal{F}$. The diameter of $\mathcal{F}$, $d := \operatorname{diam}(\mathcal{F})$\;
Fixed some $\tilde{J} \geq \Bar{J}$\;
$j \gets 2$\;
$\Lcal(1) \gets \{\nu_1\}$\;

\If{$\tilde{J} = 1$}{
    We have a tree with only a single root node $\nu_1$\;
    \Return $\nu^*(\mathbf{X}) := \nu_1(\mathbf{X})$\;
}
\Else{
    \While{$j \leq \tilde{J}$}{
        Ordered set $\Qcal_j \gets \varnothing$\;
        \For{each point $q_i$ in $\Lcal(j - 1)$}{
            \If{j = 2}{
                Construct a maximal $\frac{d}{c}$-packing set $\Ucal_{q_i}$ of $\B_2(q_i, d) \cap \mathcal{F}$\;
            }
            \Else{Construct a maximal $\frac{d}{2^{j-1}c}$-packing set $\Ucal_{q_i}$ of $\B_2(q_i, \frac{d}{2^{j-2}}) \cap \mathcal{F}$\;}
            Add all the points in $\Ucal_{q_i}$ into the set $\Ocal(q_i)$ and $\Qcal_j$\;
            For each $p_k \in \Ucal_{q_i}$, set $\Pcal(p_k) = q_i$;
        }
        \If{$j \geq 3$}{
            \While{$\Qcal_j$ not empty}{
                Pick first element $q^j_l \in \Qcal_j$ and construct a set $\Tcal_j(q^j_l) := \{q^j_k \in \Qcal_j : \|q^j_l - q^j_k\|_2 \leq \frac{d}{2^{j-1}c}, l \neq k\}$\;
                \For{$q^j_k \in \Tcal_j(q^j_l)$}{
                    Add $\{q^j_l\}$ to $\Ocal(\Pcal(q^j_k))$\;
                    
                    Remove $q^j_k$ from $\Ocal(\Pcal(q^j_k))$\;
                }
                Remove $\{q^j_l\} \cup \Tcal_j(q^j_l)$ from $\Qcal_j$\;
            }
        }
        \For{$q_i$ in $\Lcal(j - 1)$}{
            Add $\Ocal(q_i)$ into the set $\Lcal(j)$\;
        }
        $j \gets j+1$\;
    }
}

$j_0 \gets 2$\;
\While{$j_0 \leq \tilde{J}$}{
    Pick $\nu_{j_0}$ from the set $\Ocal(\nu_{j_0 - 1})$ such that $\nu_{j_0} := q_{i^*}$ with $i^*$ as defined in Lemma \ref{concentration likelihood}\;
    $j_0 \gets j_0 + 1$\;
}
\Return $\nu^*(\mathbf{X}) := \nu_{\tilde{J}}(\mathbf{X})$\;
\end{algorithm}

\cite{AkshayPrasadan2024} provides the proofs for the following two lemmas, which establish two important properties of the tree we constructed above.

\begin{lemma}[Upper bound for the cardinality of the offspring]\label{lemma:Upper bound for the cardinality of the offspring}
    Let $G$ be the pruned graph from above and assume $c > 2$. Then for any $j \geq 3$, $\mathcal{L}(J)$ forms a $d/(2^{j-2}c)$-covering of $\Fcal$ and a $d/(2^{j-1}c)$-packing of $\Fcal$. In addition, for each parent node $\Upsilon_{j-1}$ at level $j-1$, its offspring $\mathcal{O}(\Upsilon_{j-1})$ form a $d/(2^{j-2}c)$-covering of the set $\B_2(\Upsilon_{j-1}, d/2^{j-2}) \cap \Fcal$. Furthermore, the cardinality of $\mathcal{O}(\Upsilon_{j-1})$ is upper bounded by $\Mcal_\Fcal^{\loc}(d/2^{j-2}, 2c)$ for $j \geq 2$.
\end{lemma}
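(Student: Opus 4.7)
My plan is induction on $j$, simultaneously establishing all four claims. The base case $j = 2$ is essentially by construction: $\Lcal(2) = \Ucal_{\nu_1}$ is a maximal $d/c$-packing of $\B_2(\nu_1, d) \cap \Fcal = \Fcal$ (since $d := \diam_2(\Fcal)$), hence a $d/c$-covering by maximality, and its cardinality is at most $\Mcal(d/c, \Fcal) \leq \Mcal(d/(2c), \Fcal) = \Mcal^{\loc}_\Fcal(d, 2c)$.

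For the inductive step ($j \geq 3$) I would first handle the local statements about $\Ocal(\Upsilon_{j-1})$ since these involve only a single parent. Before pruning, $\Ucal_{\Upsilon_{j-1}}$ is a maximal $d/(2^{j-1}c)$-packing of $\B_2(\Upsilon_{j-1}, d/2^{j-2}) \cap \Fcal$, and therefore also a $d/(2^{j-1}c)$-covering of that intersection, with $|\Ucal_{\Upsilon_{j-1}}| \leq \Mcal^{\loc}_\Fcal(d/2^{j-2}, 2c)$ directly from Definition \ref{def local metric}. During the pruning loop, each removed $q^j_k \in \Ucal_{\Upsilon_{j-1}}$ is replaced in $\Ocal(\Upsilon_{j-1})$ by the single absorbing $q^j_l$ with $\|q^j_l - q^j_k\|_2 \leq d/(2^{j-1}c)$, so the cardinality can only decrease; moreover, by the triangle inequality, any point originally within $d/(2^{j-1}c)$ of $q^j_k$ sits within $d/(2^{j-2}c)$ of $q^j_l$, so the covering property of $\B_2(\Upsilon_{j-1}, d/2^{j-2}) \cap \Fcal$ persists at the doubled scale.

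For the global statements about $\Lcal(j)$: the packing claim follows because $\Lcal(j)$ equals the set of points picked as the head $q^j_l$ of $\Qcal_j$ across iterations---if $q^j_l$ is picked before $q^j_{l'}$, then $q^j_{l'} \notin \Tcal_j(q^j_l)$, forcing $\|q^j_l - q^j_{l'}\|_2 > d/(2^{j-1}c)$. For the covering claim, I invoke the inductive hypothesis on $\Lcal(j-1)$: given $f \in \Fcal$, find $\Upsilon_{j-1} \in \Lcal(j-1)$ with $\|f - \Upsilon_{j-1}\|_2 \leq d/(2^{j-3}c)$; the hypothesis $c > 2$ yields $d/(2^{j-3}c) < d/2^{j-2}$, so $f \in \B_2(\Upsilon_{j-1}, d/2^{j-2}) \cap \Fcal$, and the local covering proved above produces an offspring of $\Upsilon_{j-1}$ within $d/(2^{j-2}c)$ of $f$.

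The main obstacle is the careful bookkeeping inside the pruning loop: verifying that each absorbed $q^j_k$ contributes at most one newcomer $q^j_l$ to $\Ocal(\Upsilon_{j-1})$ (even when the same $q^j_l$ absorbs points belonging to several different parents), and that the surviving set of processed queue heads coincides with $\Lcal(j)$. These are purely combinatorial checks against Algorithm \ref{alg:sieve_estimator_tree}, and no new analytic idea is needed beyond the assumption $c > 2$, which enters precisely to close the covering step of the induction.
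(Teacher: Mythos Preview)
Your induction argument is correct and is exactly the standard approach: the paper itself does not prove this lemma but defers to \cite{AkshayPrasadan2024}, and your outline matches the natural proof one expects there---base case from the construction of $\Lcal(2)$, local covering and cardinality of $\Ocal(\Upsilon_{j-1})$ preserved under pruning by the one-in/one-out replacement and the triangle inequality, global packing of $\Lcal(j)$ from the queue-head separation, and global covering via the inductive hypothesis on $\Lcal(j-1)$ together with $c>2$. Nothing is missing.
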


\begin{lemma}[Upper bound for the cardinality of $\mathcal{L}(j-1) \cap \B_2(f, d/2^{j-2})$]\label{lemma:Upper bound for the cardinality2}
    Pick any $f \in \Fcal$. Then for $J \geq 2$, $\mathcal{L}(j-1) \cap \B_2(f, d/2^{j-2})$ has cardinality upper bounded by $\Mcal_\Fcal^{\loc}(d/2^{j-2}, c) \leq \Mcal_\Fcal^{\loc}(d/2^{j-2}, 2c)$.
\end{lemma}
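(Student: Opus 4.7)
The plan is to leverage directly the $L_2$-packing property of the pruned-tree levels established in Lemma \ref{lemma:Upper bound for the cardinality of the offspring}, matched against the definition of the local metric entropy in Definition \ref{def local metric}. Specifically, Lemma \ref{lemma:Upper bound for the cardinality of the offspring} asserts that for $j \geq 3$ the set $\Lcal(j)$ is a $d/(2^{j-1}c)$-packing of $\Fcal$; reindexing, $\Lcal(j-1)$ is a $d/(2^{j-2}c)$-packing of $\Fcal$ whenever $j \geq 4$.

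For the generic case $j \geq 4$ I would fix $f \in \Fcal$ and examine the finite set $\Lcal(j-1) \cap \B_2(f, d/2^{j-2})$. By the packing property just recalled, every two distinct points of this set are separated in $L_2$ by more than $d/(2^{j-2}c)$, and they all lie inside $\Fcal \cap \B_2(f, d/2^{j-2})$. Hence this subset is itself a $d/(2^{j-2}c)$-packing of $\Fcal \cap \B_2(f, d/2^{j-2})$, so Definition \ref{def local metric} gives
\[
|\Lcal(j-1) \cap \B_2(f, d/2^{j-2})| \leq \Mcal\bigl(d/(2^{j-2}c),\, \Fcal \cap \B_2(f, d/2^{j-2})\bigr) \leq \Mcal^{\loc}_{\Fcal}(d/2^{j-2}, c),
\]
which is the desired estimate.

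Next I would handle the two edge cases $j = 2$ and $j = 3$ not covered by Lemma \ref{lemma:Upper bound for the cardinality of the offspring}. For $j = 2$, the set $\Lcal(1) = \{\nu_1\}$ has cardinality at most $1 \leq \Mcal^{\loc}_{\Fcal}(d, c)$, so the bound is immediate. For $j = 3$, Algorithm \ref{alg:sieve_estimator_tree} builds $\Lcal(2)$ as a maximal $d/c$-packing of $\B_2(\nu_1, d) \cap \Fcal = \Fcal$ (the ball of radius $d$ already swallows $\Fcal$ because $d$ is the diameter), so the points of $\Lcal(2)$ are pairwise $d/c$-separated and therefore also $d/(2c)$-separated; the same packing-number argument as in the generic case then bounds $|\Lcal(2) \cap \B_2(f, d/2)|$ by $\Mcal^{\loc}_{\Fcal}(d/2, c)$.

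Finally, the trailing inequality $\Mcal^{\loc}_{\Fcal}(\tau, c) \leq \Mcal^{\loc}_{\Fcal}(\tau, 2c)$ is pure monotonicity of packing numbers in the radius: for any set $S$, $\Mcal(\tau/c, S) \leq \Mcal(\tau/(2c), S)$ because $\tau/c > \tau/(2c)$ and a coarser packing radius admits no more points than a finer one, and this inequality is preserved after taking the supremum over the ball center. I do not anticipate a substantive obstacle here — the lemma is essentially a bookkeeping check that the packing scale used by the algorithm at level $j-1$ is precisely the scale counted by $\Mcal^{\loc}_{\Fcal}(\cdot, c)$ at ball radius $d/2^{j-2}$.
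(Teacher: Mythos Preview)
Your proposal is correct and follows the natural argument: use the packing property of $\Lcal(j-1)$ from Lemma \ref{lemma:Upper bound for the cardinality of the offspring} to recognize $\Lcal(j-1)\cap\B_2(f,d/2^{j-2})$ as a $d/(2^{j-2}c)$-packing of $\Fcal\cap\B_2(f,d/2^{j-2})$, then invoke Definition \ref{def local metric}; the edge cases $j=2,3$ and the monotonicity in $c$ are handled exactly as they should be. The paper itself does not spell out a proof but defers to \cite{AkshayPrasadan2024}, whose argument is essentially the one you give.
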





By Proposition 10 in \cite{shrotriya2023revisiting}, we know that $\nu^*(\mathbf{X})$ is a measurable function of the data with respect to the Borel $\sigma$-field on $\mathcal{F}$ in the $L_2$-metric topology. We can now derive a main theorem to establish the performance of $\nu^*(\mathbf{X})$, as shown in Theorem \ref{upper bound}.

\begin{theorem}[Upper bound rate for $\nu^*(X)$]\label{upper bound}
    Let $\nu^*$ be the output of the multistage sieve MLE which is run for $j$ steps where $j \geq \Bar{J}$. Here $\Bar{J}$ is defined as the maximal integer $J \in \mathbb{N}$, such that $\tau_J := \frac{\sqrt{C_{10}}d}{2^{(J-1)}(C+1)}$

satisfies
\begin{equation}\label{Equation:condition on J}
    N\tau_J^2 > 2\log \left[\Mcal_{\Fcal}^{\loc}\bigg( \tau_J\frac{c}{\sqrt{C_{10}}}, 2c\bigg)\right]^2 \vee \log 2,
\end{equation}
or $\Bar{J}=1$ if no such $J$ exists.

Then
\[
\mathbb{E}_f\|\nu^* - f\|_2^2 \lesssim \max\{ \tau_{\Bar{J}}^2, \epsilon \} \wedge d^2.
\]
We remind the reader that $c := 2(C+1)$ is the constant from the definition of local metric entropy, which is assumed to be sufficiently large. Here $C$ is assumed to satisfy (9), and $L(\alpha, \beta, C)$ is defined as per (10) in \cite{shrotriya2023revisiting}.
\end{theorem}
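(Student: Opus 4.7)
The plan is to inductively control $\|\nu_j - f\|_2$ as Algorithm \ref{alg:sieve_estimator_tree} descends through the pruned tree, invoking Lemma \ref{concentration likelihood} at each level. Recalling $c = 2(C+1)$, by Lemma \ref{lemma:Upper bound for the cardinality of the offspring} the offspring set at level $j \geq 2$ is a $\delta_j$-covering of $\B_2(\nu_{j-1}, d/2^{j-2}) \cap \Fcal$ with $\delta_j := d/(2^{j-2}c) = d/(2^{j-1}(C+1))$, and $\tau_j = \sqrt{C_{10}}\,\delta_j$, so $N\tau_j^2 = C_{10}N\delta_j^2$ is precisely the exponent appearing in the concentration inequality. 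The key inductive claim is that on a high-probability event, $\|\nu_j - f\|_2 \leq d/2^{j-1}$ for every $j \in \{2,\dots,\Bar J\}$, which immediately yields $\|\nu_{\Bar J} - f\|_2^2 \lesssim \tau_{\Bar J}^2$.

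For the inductive step, assume $\|\nu_{j-1} - f\|_2 \leq d/2^{j-2}$, so $f \in \B_2(\nu_{j-1}, d/2^{j-2}) \cap \Fcal$ and some offspring $\nu$ of $\nu_{j-1}$ satisfies $\|\nu - f\|_2 \leq \delta_j$. Provided $\delta_j \gtrsim \sqrt \epsilon$, Lemma \ref{concentration likelihood} applied to $\Ocal(\nu_{j-1})$ yields $\|\nu_j - f\|_2 \leq (C+1)\delta_j = d/2^{j-1}$. Since $\nu_{j-1}$ is data-dependent, I take a union bound over all level-$(j-1)$ nodes that can plausibly lie within $d/2^{j-2}$ of $f$: Lemma \ref{lemma:Upper bound for the cardinality2} bounds their count by $\Mcal_\Fcal^{\loc}(d/2^{j-2}, 2c)$, and Lemma \ref{lemma:Upper bound for the cardinality of the offspring} bounds the number of offspring of each such parent by the same quantity. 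Thus the per-level failure probability is at most $[\Mcal_\Fcal^{\loc}(d/2^{j-2}, 2c)]^2 \exp(-C_{10} N \delta_j^2) \leq \exp(-N\tau_j^2/2)$ for $j \leq \Bar J$, by the defining inequality on $\Bar J$.

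To cover the $\epsilon$ and terminal regimes, let $J^*$ denote the first level at which either $\delta_j < C_1 \sqrt \epsilon$ or $j > \Bar J$, so Lemma \ref{concentration likelihood} ceases to apply. The induction through level $J^*-1$ yields $\|\nu_{J^*-1} - f\|_2 \lesssim \max\{\tau_{\Bar J}, \sqrt \epsilon\}$, and since every subsequent offspring lies within $O(d/2^{j-2})$ of its parent by construction, a telescoping triangle inequality propagates this bound to $\|\nu^* - f\|_2^2 \lesssim \max\{\tau_{\Bar J}^2, \epsilon\}$ on the success event. Summing the per-level failure probabilities (dominated by the $j = \Bar J$ term due to the geometric structure of $\tau_j$) produces a total failure probability whose product with $d^2$ is of lower order; combined with the trivial almost-sure bound $\|\nu^* - f\|_2 \leq d$, which also yields the $\wedge d^2$ in the final estimate, this gives $\Ebb_f\|\nu^* - f\|_2^2 \lesssim \max\{\tau_{\Bar J}^2, \epsilon\} \wedge d^2$.

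The main obstacle is the union bound in the second paragraph: because $\nu_{j-1}$ is data-dependent, one cannot apply Lemma \ref{concentration likelihood} to a single offspring set, and a naive union bound over all of $\Lcal(j-1)$ would blow up. The crucial tool is Lemma \ref{lemma:Upper bound for the cardinality2}, which restricts the count of plausible parents to the local entropy $\Mcal_\Fcal^{\loc}(d/2^{j-2}, 2c)$ rather than the global entropy; this is exactly what preserves the sharp local-entropy rate $\tau_{\Bar J}$ matching the minimax lower bound of Corollary \ref{corollary for new lower bound}.
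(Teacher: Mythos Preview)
Your sketch is correct in its essential structure and matches the paper's approach closely: the paper packages your inductive argument into Lemma \ref{lemma:concentration for large delta} (the per-level concentration with the squared local-entropy factor coming from the union bound over plausible parents and their offspring), and the conversion to an expectation bound into Lemma \ref{lemma:lemma3.10 in prasadan}. Your $J^*$, the first level at which $\delta_j < C_1\sqrt{\epsilon}$ or the local-entropy condition fails, is exactly the paper's case split (their $j_0$ versus $\Bar{J}$), and the telescoping past $J^*-1$ is the paper's use of the geometric offspring--parent distances.

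There is one gap in the expectation step: multiplying the total failure probability by $d^2$ is too crude. From the defining inequality \eqref{Equation:condition on J} at $\Bar{J}$ you only get $N\tau_{\Bar{J}}^2 > \log 2$, so $d^2\exp(-N\tau_{\Bar{J}}^2/2)$ need not be $\lesssim \tau_{\Bar{J}}^2$ when $d \gg \tau_{\Bar{J}}$. The paper repairs this via tail integration: it establishes $\PP(\|\nu^*-f\|_2 > \omega x) \lesssim \exp(-Nx^2/2)$ for \emph{all} $x \geq \tau_{\tilde{J}}$ (not just at the endpoint), then integrates $\int_{\tau_{\tilde J}}^\infty x\exp(-Nx^2/2)\,dx = N^{-1}\exp(-N\tau_{\tilde J}^2/2)$, picking up a $1/N$ in place of $d^2$; and $1/N \lesssim \tau_{\tilde J}^2$ follows immediately from $N\tau_{\tilde J}^2 > \log 2$. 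An equivalent fix is level-by-level bookkeeping: on the event of failure at level $j$ but success through $j-1$, bound the loss by $\tau_{j-1}^2 \asymp \tau_j^2$ rather than $d^2$, and sum $\sum_j \tau_j^2\exp(-N\tau_j^2/2)$.
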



\begin{remark}  
In Definition \ref{def local metric}, which defines local metric entropy, we specify a constant, denoted by \(\tilde{c}\), so that \(\Mcal_\Fcal^{\loc}(\tau, \tilde{c})\) is computed using \(\tau/\tilde{c}\) packings of balls in \(K\). In the lower bound result of Lemma \ref{lowerbound in shrotriya}, we could have used \(\tilde{c} = 2c\) instead of \(c\), and the resulting bound would remain unchanged except for an absolute constant. Therefore, the local metric entropy parameter in the lower bound can be chosen to match the \(2c\) appearing in \eqref{Equation:condition on J} of Theorem \ref{upper bound}. Consequently, without loss of generality, we can assume that the same sufficiently large constant \(c\) appears in both the lower and upper bounds, and replace \eqref{Equation:condition on J} with the following: Let \(\Bar{J}\) be the maximal integer such that
\begin{equation} \label{New Equation:condition on J}
    \frac{N\tau_J^2}{\sigma^2} > 2\log \left[\Mcal_\Fcal^{\loc}\left(\tau_J\frac{c}{\sqrt{C_{10}}}, c\right)\right]^2 \vee \log 2,
\end{equation}
with \(\Bar{J} = 1\) if this condition never occurs.
\end{remark}

\subsection{Minimax Rate}\label{Minimax Rate Section}
Now we define 
\[
\tau^* := \sup\left\{\tau : N\tau^2 \leq \log \Mcal_{\Fcal}^{\loc}(\tau, c)\right\}.
\] 
We will use this and $\epsilon$ to achieve the optimal minimax rate. However, we first require the following lemma to handle the case when $\epsilon$ is small.

\begin{lemma}[Lower bound for optimal minimax rate]\label{lemma for minimax rate}
    Define $\tau^* := \sup\{\tau : N\tau^2 \leq \log \Mcal_{\Fcal}^{\loc}(\tau, c)\}$, where $c$ in the definition of local metric entropy is a sufficiently large absolute constant. When $\epsilon < \frac{k}{N}$ with a constant $k > 7$, we have 
    $$\max\{{\tau^*}^2 \wedge d^2, \epsilon \wedge d^2\} = {\tau^*}^2 \wedge d^2$$
    up to a constant.
\end{lemma}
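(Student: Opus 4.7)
The plan is to establish the only non-trivial direction, $\epsilon \wedge d^2 \lesssim {\tau^*}^2 \wedge d^2$; the reverse inequality is automatic from the definition of maximum. The strategy is to produce a lower bound of the form $\tau^* \ge \min(d/4, \sqrt{\log(c/2)/N})$ by using the star-shape of $\Fcal$ to exhibit an explicit one-dimensional packing inside $\Fcal$, and then to finish with a short case analysis that exploits $\epsilon < k/N$ together with the fact that $c$ is taken sufficiently large in terms of $k$.

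For the packing, I would assume $d > 0$ (else the claim is trivial) and let $f_0 \in \Fcal$ be a star-center of $\Fcal$. From the triangle inequality $\|f_a - f_b\|_2 \le 2\sup_{f \in \Fcal}\|f - f_0\|_2$ and the definition of the diameter, there exists a point $f_1 \in \Fcal$ with $D := \|f_0 - f_1\|_2 \ge d/4$. By star-shape, the segment $g_s := f_0 + (s/D)(f_1 - f_0)$ lies in $\Fcal$ for every $s \in [0, D]$ and satisfies $\|g_s - f_0\|_2 = s$ and $\|g_s - g_t\|_2 = |s - t|$. For every $\tau \in (0, D]$, the points $s_i := 2i\tau/c$ with $i = 0, 1, \ldots, \lfloor c/2 \rfloor$ form a $\tau/c$-separated subset of $\B_2(f_0, \tau) \cap \Fcal$ of cardinality $\lfloor c/2 \rfloor + 1$, whence
\[
\log \Mcal^{\loc}_{\Fcal}(\tau, c) \ge \log(c/2) \quad \text{for all } \tau \in (0, D].
\]

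I then choose $c$ large enough that $\log(c/2) \ge k$, which is permissible since $c$ is a sufficiently large absolute constant and $k$ is fixed, and split into three cases. If $d^2 \le \epsilon$, take $\tau = d/4 \le D$: then $N\tau^2 \le N\epsilon/16 < k/16 \le \log(c/2) \le \log \Mcal^{\loc}_{\Fcal}(\tau, c)$, so $\tau^* \ge d/4$ and ${\tau^*}^2 \wedge d^2 \ge d^2/16 \gtrsim \epsilon \wedge d^2$. If $d^2 > \epsilon$ and $D \ge \sqrt{\epsilon}$, take $\tau = \sqrt{\epsilon} \le D$: then $N\tau^2 = N\epsilon < k \le \log(c/2)$, so $\tau^* \ge \sqrt{\epsilon}$ and ${\tau^*}^2 \wedge d^2 \ge \epsilon = \epsilon \wedge d^2$. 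Finally, if $d^2 > \epsilon$ and $D < \sqrt{\epsilon}$, take $\tau = D$: then $N\tau^2 = ND^2 < N\epsilon < k \le \log(c/2)$, so $\tau^* \ge D \ge d/4$ and ${\tau^*}^2 \wedge d^2 \ge d^2/16 > \epsilon/16 \gtrsim \epsilon \wedge d^2$. Combining the three cases yields the claim. The only delicate point is the numerical bookkeeping required to commit to a value of $c$ large enough relative to the constant $k > 7$ appearing in the hypothesis; the substance of the proof is the packing construction enabled by star-shape in Step 2.
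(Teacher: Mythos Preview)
Your proof is correct and follows essentially the same approach as the paper: both arguments exploit the star-shape of $\Fcal$ to place a one-dimensional $\tau/c$-packing inside a ball of radius $\tau$, thereby forcing $\log \Mcal^{\loc}_{\Fcal}(\tau,c)$ to exceed a constant of order $\log c$ (chosen $\geq k$), and then conclude via a short case split that $\tau^{*2}\wedge d^2 \gtrsim (k/N)\wedge d^2 \geq \epsilon\wedge d^2$. Your version is slightly more explicit (you name the star-center, parametrize the segment, and split into three cases according to the relative sizes of $d^2$, $\epsilon$, and $D$), whereas the paper uses a terser two-case split on $d$ versus $2\sqrt{k/N}$, but the substance is the same.
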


Using the results from Lemma \ref{lowerbound in shrotriya}, Corollary \ref{corollary for new lower bound}, Theorem \ref{upper bound} and Lemma \ref{lemma for minimax rate}, we can formally demonstrate that the estimator obtained from our algorithm achieves the minimax rate if Condition \ref{Condition: L_2 and TV} holds, as shown in the following theorem.

\begin{theorem}[Minimax rate]\label{Minimax Rate}
    Define $\tau^* := \sup\{\tau : N\tau^2 \leq \log \Mcal_{\Fcal}^{\loc}(\tau, c)\}$, where $c$ in the definition of local metric entropy is a sufficiently large absolute constant. If Condition \ref{Condition: L_2 and TV} holds, the minimax rate is given by
\[
\max\{{\tau^*}^2 \wedge d^2, \epsilon \wedge d^2\} \text{ up to absolute constant factors.}
\]
\end{theorem}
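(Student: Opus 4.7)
My plan is to stitch together the already-established upper and lower bounds, splitting on whether $\epsilon < k/N$ or $\epsilon \geq k/N$, and to verify that the algorithmic scale $\tau_{\bar J}$ coincides with the entropic scale $\tau^*$ up to absolute constants.

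For the upper bound, Theorem \ref{upper bound} yields
\[
\Ebb_f\|\nu^* - f\|_2^2 \;\lesssim\; \max\{\tau_{\bar J}^2,\epsilon\}\wedge d^2 \;=\; \max\{\tau_{\bar J}^2\wedge d^2,\;\epsilon\wedge d^2\},
\]
using the distributive identity $\max(a,b)\wedge c=\max(a\wedge c, b\wedge c)$. It then suffices to verify $\tau_{\bar J}^2\asymp(\tau^*)^2$. By the defining inequalities, $\tau^*$ is the largest $\tau$ with $N\tau^2\leq\log\Mcal_\Fcal^{\loc}(\tau,c)$, whereas $\tau_{\bar J}$ is the largest dyadic scale at which $N\tau_J^2$ exceeds $\log\Mcal_\Fcal^{\loc}(\tau_J c/\sqrt{C_{10}},2c)$ up to fixed absolute constants (see \eqref{Equation:condition on J} and the subsequent remark, which also reconciles the local-entropy parameter so that the same $c$ appears in both statements). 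Since $\Fcal$ is star-shaped, $\log\Mcal_\Fcal^{\loc}(\tau,c)/(N\tau^2)$ is monotone non-increasing in $\tau$ by the standard rescaling argument, so both $\tau^*$ and $\tau_{\bar J}$ pin down the same scale up to an absolute multiplicative constant.

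For the lower bound I split into cases. When $\epsilon\geq k/N$, Condition \ref{Condition: L_2 and TV} triggers Corollary \ref{corollary for new lower bound}, giving an $\epsilon\wedge d^2$ lower bound. Independently, Lemma \ref{lowerbound in shrotriya} applies to the corrupted setting — choosing the corruption scheme $\Ccal$ to be the identity reduces the corrupted problem to the uncorrupted one, so the uncorrupted lower bound is inherited. Picking $\tau$ as a sufficiently small absolute-constant fraction of $\tau^*$ ensures $\log\Mcal_\Fcal^{\loc}(\tau,c)>2N\tau^2/\alpha+2\log 2$ by the monotonicity above (since shrinking $\tau$ by a constant multiplies $N\tau^2$ by that constant squared while not decreasing $\log\Mcal_\Fcal^{\loc}(\tau,c)$), producing a lower bound of order $(\tau^*)^2\wedge d^2$. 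Taking the max of the two matches the target. When $\epsilon<k/N$, Lemma \ref{lemma for minimax rate} already tells us $\max\{(\tau^*)^2\wedge d^2,\epsilon\wedge d^2\}\asymp(\tau^*)^2\wedge d^2$, so the single bound from Lemma \ref{lowerbound in shrotriya} suffices.

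The main obstacle is the reconciliation $\tau_{\bar J}\asymp\tau^*$, since the two definitions differ in several constants ($c$ versus $2c$ in the local-entropy parameter, $c/\sqrt{C_{10}}$ in the scale, and a factor of $2$ in the threshold). Establishing this equivalence requires carefully tracking these constants and invoking the monotonicity/rescaling property of local metric entropy on star-shaped sets, essentially the same device as in \cite{shrotriya2023revisiting}. Once this is in place, assembling the case analysis above immediately produces the claimed two-sided bound of $\max\{(\tau^*)^2\wedge d^2,\epsilon\wedge d^2\}$ up to absolute constants.
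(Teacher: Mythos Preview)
Your high-level plan matches the paper's: upper bound via Theorem~\ref{upper bound} combined with $\tau_{\bar J}\asymp\tau^*$, lower bound via Lemma~\ref{lowerbound in shrotriya} plus Corollary~\ref{corollary for new lower bound}, and the $\epsilon<k/N$ case dispatched by Lemma~\ref{lemma for minimax rate}. However, there is a real gap.

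Your argument implicitly assumes $N(\tau^*)^2$ is bounded below by an absolute constant. Both of your key steps need this. For the lower bound, you propose to take $\tau=\kappa\tau^*$ for small $\kappa$ and claim $\log\Mcal_\Fcal^{\loc}(\tau,c)>2N\tau^2/\alpha+2\log 2$. Monotonicity gives $\log\Mcal_\Fcal^{\loc}(\tau,c)\geq N(\tau^*)^2$, so you would need $N(\tau^*)^2(1-2\kappa^2/\alpha)>2\log 2$, which fails outright when $N(\tau^*)^2$ is small. For the upper bound, the paper's construction of a $\tilde\tau\asymp\tau^*$ satisfying \eqref{Equation:condition on J} uses $N(\tau^*)^2>4\log 2$ to guarantee the $\log 2$ branch of the $\vee$; without it the comparison $\tau_{\bar J}\lesssim\tau^*$ is not established by the device you sketch.

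The paper handles the remaining regime $N(\tau^*)^2\leq 4\log 2$ separately: it shows that then $d\leq 4\tau^*$ (using star-shapedness and large $c$ to pack a segment, forcing $\log\Mcal_\Fcal^{\loc}(2\tau^*,c)$ to be large if $d>4\tau^*$, a contradiction). Once $d\asymp\tau^*$, the upper bound is just the trivial $d^2$, and the lower bound comes from applying Lemma~\ref{lowerbound in shrotriya} at $\tilde\tau=d$, where $2N\tilde\tau^2/\alpha$ is bounded by an absolute constant and $\log\Mcal_\Fcal^{\loc}(d,c)$ can be made large by taking $c$ large. You need this extra case; the monotonicity-only argument does not cover it.
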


\begin{remark}
    We can extend our results to loss functions in KL-divergence and the Hellinger metric, as stated in \cite{shrotriya2023revisiting}, because we have the `topological equivalence' of the KL-divergence and squared Hellinger metric with the squared $L_2$-metric on $\Fcal$ in Lemma \ref{lemma2 in shrotriya}.
\end{remark}

The following proposition relaxes the requirement of boundedness on $\mathcal{F}$, extending it from $\mathcal{F}_B^{[\alpha,\beta]}$ to $\mathcal{F}_B^{[0,\beta]}$, provided that there exists at least one $f_\alpha \in \mathcal{F}$ that is $\alpha$-lower bounded, where $\alpha > 0$.

\begin{proposition}[Extending results to $\mathcal{F}_B^{[0,\beta]}$]\label{Extending results to zero lower bound}
    Let $\mathcal{F} \subset \mathcal{F}_B^{[0,\beta]}$ be a star-shaped class of densities, with at least one $f_\alpha \in \mathcal{F}$ that is $\alpha$-lower bounded, with $\alpha > 0$. If Condition \ref{Condition: L_2 and TV} holds, then the minimax rate in the squared $L_2$-metric is $\max\{\tau^{*2} \wedge d^2, \epsilon \wedge d^2\}$, where $\tau^* := \sup \{\tau : N \tau^2 \leq \log M_\mathcal{F}^{\loc}(\tau, c)\}$. 
\end{proposition}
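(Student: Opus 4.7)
The plan is to reduce the case $\mathcal{F} \subset \mathcal{F}_B^{[0,\beta]}$ to the two-sided bounded setting already handled by Theorem \ref{Minimax Rate} via a shrinkage construction centered at $f_\alpha$. Fix a constant $\lambda \in (0,1)$ (for concreteness $\lambda = 1/2$) and define
\[
\mathcal{F}_\lambda := \{(1-\lambda)f_\alpha + \lambda f : f \in \mathcal{F}\}.
\]
Since every element of $\mathcal{F}_\lambda$ is at least $(1-\lambda)\alpha > 0$ and at most $\beta$, we have $\mathcal{F}_\lambda \subset \mathcal{F}_B^{[(1-\lambda)\alpha, \beta]}$. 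The affine bijection $\Phi : f \mapsto (1-\lambda)f_\alpha + \lambda f$ from $\mathcal{F}$ to $\mathcal{F}_\lambda$ satisfies $\|\Phi(f_1) - \Phi(f_2)\|_2 = \lambda \|f_1 - f_2\|_2$ and $\TV(\Phi(f_1), \Phi(f_2)) = \lambda\,\TV(f_1, f_2)$, so star-shapedness carries over to $\mathcal{F}_\lambda$; moreover, because the TV-ball radius defining $\xi_\lambda(\epsilon)$ is dilated by $1/\lambda$ relative to that defining $\xi(\epsilon)$, one obtains $\xi_\lambda(\epsilon) \geq \lambda^2 \xi(\epsilon) \gtrsim \epsilon$, so Condition \ref{Condition: L_2 and TV} transfers to $\mathcal{F}_\lambda$.

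For the upper bound I would Bernoulli-thin the corrupted sample $\mathbf{X}$: independently for each $i$, with probability $\lambda$ set $Y_i := X_i$, and with probability $1 - \lambda$ draw $Y_i \sim f_\alpha$. Then each $Y_i$ is iid from $g^\star := \Phi(f) \in \mathcal{F}_\lambda$, and at most $\epsilon N$ of the $Y_i$'s are corrupted, so the corruption rate of $\mathbf{Y}$ stays below $1/3$ deterministically. Applying Algorithm \ref{alg:sieve_estimator_tree} together with Theorem \ref{Minimax Rate} to $\mathcal{F}_\lambda$ on $\mathbf{Y}$ yields an estimator $\hat{g} \in \mathcal{F}_\lambda$ satisfying
\[
\mathbb{E}\|\hat{g} - g^\star\|_2^2 \lesssim \max\{(\tau_\lambda^\star)^2 \wedge d_\lambda^2,\ \epsilon \wedge d_\lambda^2\},
\]
where $d_\lambda = \lambda d$ and $\tau_\lambda^\star$ is the analogue of $\tau^\star$ for $\mathcal{F}_\lambda$. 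Defining $\hat{f} := \Phi^{-1}(\hat{g}) = (\hat{g} - (1-\lambda)f_\alpha)/\lambda \in \mathcal{F}$ yields $\|\hat{f} - f\|_2^2 = \|\hat{g} - g^\star\|_2^2/\lambda^2$, which delivers the desired upper bound after the rate comparison described below.

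For the lower bound, the $\epsilon \wedge d^2$ term follows directly from Lemma \ref{new lower bound} and Corollary \ref{corollary for new lower bound} applied to $\mathcal{F}$ itself, since those results use only Condition \ref{Condition: L_2 and TV} and no positive lower bound on densities. For the entropy term $\tau^{\star 2} \wedge d^2$ I would exploit $\mathcal{F}_\lambda \subset \mathcal{F}$ (which holds when the star-center of $\mathcal{F}$ may be taken to be $f_\alpha$; more generally one uses a doubly-shrunken sub-class of $\mathcal{F}$ that is simultaneously contained in $\mathcal{F}$ and bounded below): any maximal $\tau/c$-packing of $\mathcal{F}_\lambda \cap \B_2(g, \tau)$ remains a valid packing in $\mathcal{F}$, and because its elements are bounded below by $(1-\lambda)\alpha > 0$, the KL–$L_2$ bound of Lemma \ref{lemma2 in shrotriya} applies. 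Plugging into the Fano argument of Lemma \ref{lowerbound in shrotriya} then yields a minimax lower bound of order $(\tau_\lambda^\star)^2$, to be converted to $\tau^{\star 2}$ by the same scaling relation as in the upper bound.

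The main obstacle is the quantitative comparison $(\tau_\lambda^\star/\lambda)^2 \asymp \tau^{\star 2}$. Using the identity $\log \Mcal^{\loc}_{\mathcal{F}_\lambda}(\tau, c) = \log \Mcal^{\loc}_\mathcal{F}(\tau/\lambda, c)$, the defining equation for $\tau_\lambda^\star$ rewrites, with $u := \tau_\lambda^\star/\lambda$, as $N\lambda^2 u^2 = \log \Mcal^{\loc}_\mathcal{F}(u, c)$; monotonicity of the two sides in $u$ together with the definition of $\tau^\star$ forces $\tau^\star \leq u \leq \tau^\star/\lambda$, so $(\tau_\lambda^\star/\lambda)^2 \in [\tau^{\star 2},\ \tau^{\star 2}/\lambda^2]$, a constant multiple of $\tau^{\star 2}$ for the fixed $\lambda$. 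Secondary bookkeeping consists in tracking the dependencies of all constants on $\alpha$, $\beta$, $\lambda$, translating the Condition \ref{Condition: L_2 and TV} threshold $k/N$ between the two classes, and handling the additional randomness introduced by the Bernoulli augmentation when taking expectations.
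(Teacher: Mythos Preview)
Your proposal is correct and follows the same strategy the paper invokes: the paper does not give its own proof but simply asserts that Proposition~13 of \cite{shrotriya2023revisiting} carries over verbatim to the corrupted setting, and that proposition is precisely the shrinkage argument you outline---replace $\mathcal{F}$ by $\mathcal{F}_\lambda := (1-\lambda)f_\alpha + \lambda\mathcal{F} \subset \mathcal{F}_B^{[(1-\lambda)\alpha,\beta]}$, use the isometry-up-to-$\lambda$ relation to transfer local entropies, and deduce $(\tau_\lambda^*/\lambda)^2 \asymp \tau^{*2}$ (your monotonicity argument for $\tau^* \leq \tau_\lambda^*/\lambda \leq \tau^*/\lambda$ is correct).

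The one place where your write-up goes slightly beyond what the paper says is the Bernoulli-thinning device for the upper bound. The paper's remark that the uncorrupted proof ``remains valid'' with corrupted data suggests they intend the more direct route: run the multistage sieve on $\mathcal{F}_\lambda$ using the original corrupted sample $\mathbf{X}$ (not a thinned version), relying on the fact that the concentration Lemma~\ref{likelihood upper bound} only needs the \emph{candidate} densities $g,g'$ to be bounded below, together with the observation that the target $g^\star=\Phi(f)$ lies within $L_2$-distance $(1-\lambda)d$ of $f$, which is absorbed into the final rate. Your thinning reduction is cleaner conceptually and equally valid; it just introduces auxiliary randomness you correctly flag as bookkeeping. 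Your caveat about needing $\mathcal{F}_\lambda\subset\mathcal{F}$ (or a doubly-shrunken surrogate when the star center is not $f_\alpha$) for the Fano lower bound is also exactly the technical wrinkle that Proposition~13 of \cite{shrotriya2023revisiting} handles.
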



\begin{remark}
    The proof of Proposition \ref{Extending results to zero lower bound} in the case without corrupted data, as established in Proposition $13$ of \cite{shrotriya2023revisiting}, remains valid even in our case with corrupted data. We therefore omit the proof.
\end{remark}


\section{Discussion}\label{discussion}
In this paper, we establish a new criterion for comparing the performance of two densities, \(g_1\) and \(g_2\), based on corrupted data. Using this criterion, we modify an algorithm to construct a density estimator within a star-shaped density class under corrupted data. We then leverage local metric entropy to derive the minimax lower and upper bounds for density estimation over a star-shaped density class consisting of densities that are uniformly bounded above and below (in the sup norm) in the presence of adversarially corrupted data. If Condition \ref{Condition: L_2 and TV} holds, these bounds are shown to match. 

For future work, it is noteworthy that our estimator's performance relies on satisfying Condition \ref{Condition: L_2 and TV}, i.e. \ \(\xi(\epsilon) \gtrsim \epsilon\) when $\epsilon$ is `big', which is challenging to circumvent at present. We are keen to investigate whether it is possible to achieve the optimal minimax rate without this stringent condition. Moreover, this paper's analysis depends heavily on the equivalence between the KL divergence and the \(L_2\) loss to establish the lower bound. We are interested in examining whether similar results could be obtained using alternative loss functions. Another promising avenue for research is the development of an estimator for nonparametric regression within an adversarially corrupted data framework.

\bibliography{citation}
\bibliographystyle{plainnat}

\newpage
\appendix
\section{Minimax Lower Bounds}

\begin{lemma}[Lemma 6 in \cite{shrotriya2023revisiting}]
    Let $c > 0$ be fixed, and independent of the data samples $\mathbf{X}$. Then the minimax rate satisfies
$$\inf_{\hat{\nu}}\sup_{f\in\Fcal}\Ebb_f\| \hat{\nu}(\mathbf{X}) - f \|_2^2 \geq \frac{\tau^2}{8c^2},$$
if $\tau$ satisfies $\log \Mcal_{\Fcal}^{\loc}(\tau, c) > 2N\tau^2/\alpha + 2\log 2$.
\end{lemma}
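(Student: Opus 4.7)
The plan is to apply Fano's inequality (Lemma~\ref{Fano's inequality}) to a maximal local packing witnessing $\Mcal_{\Fcal}^{\loc}(\tau, c)$, and then control the resulting mutual information using the KL--$L_2$ equivalence of Lemma~\ref{lemma2 in shrotriya}. First I would realize the supremum in Definition~\ref{def local metric}: pick $f_0\in\Fcal$ such that a maximal $\tau/c$-packing $\{f^1,\ldots,f^m\}$ of $\Fcal\cap\B_2(f_0,\tau)$ has cardinality $m = \Mcal_{\Fcal}^{\loc}(\tau,c)$ (passing to the sup via an arbitrarily small slack if needed). By construction the $f^j$'s are pairwise $\tau/c$-separated, and all lie inside the single $L_2$-ball $\B_2(f_0,\tau)$.

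With $J\sim\mathrm{Unif}[m]$ and $(X_i\mid J=j)\stackrel{\mathrm{iid}}{\sim} f^j$ for $i\in[1,N]$, Fano's inequality applied with separation $\tau/c$ yields
\[
\inf_{\hat{\nu}}\sup_{f\in\Fcal}\Ebb_f\|\hat\nu(\mathbf X)-f\|_2^2 \;\geq\; \frac{\tau^2}{4c^2}\left(1-\frac{I(\mathbf X;J)+\log 2}{\log m}\right),
\]
so it suffices to show $I(\mathbf X;J)\leq N\tau^2/\alpha$. Feeding this into the hypothesis $\log m > 2N\tau^2/\alpha + 2\log 2$ forces the parenthetical factor to exceed $1/2$, which delivers the advertised $\tau^2/(8c^2)$.

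The one subtle step, and precisely the location of the typo flagged in the remark after Lemma~\ref{lowerbound in shrotriya}, is this mutual information bound. A naive triangle-inequality argument gives only $\|f^j-f^k\|_2\leq 2\tau$, which combined with the KL--$L_2$ upper bound yields $I(X_1;J)\leq 4\tau^2/\alpha$ and loses a factor $4$. I would fix this by using the information-projection identity for $\bar f := m^{-1}\sum_j f^j$: a short Gibbs-inequality computation shows that $\sum_j d_{KL}(f^j\|g) = \text{const} - \int \bar f \log g$ as a function of $g$, so $\bar f$ minimizes $\sum_j d_{KL}(f^j\|\cdot)$ over all densities $g$. Comparing against the single point $g = f_0$ then gives
\[
I(X_1;J) \;=\; \tfrac{1}{m}\sum_j d_{KL}(f^j\|\bar f) \;\leq\; \tfrac{1}{m}\sum_j d_{KL}(f^j\|f_0) \;\leq\; \tfrac{1}{m\alpha}\sum_j \|f^j-f_0\|_2^2 \;\leq\; \tau^2/\alpha,
\]
where the middle step is the KL--$L_2$ upper bound from Lemma~\ref{lemma2 in shrotriya} and the last uses $f^j\in\B_2(f_0,\tau)$. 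Tensorizing over the $N$ i.i.d.\ coordinates gives $I(\mathbf X;J)\leq N\cdot I(X_1;J) \leq N\tau^2/\alpha$, as desired. The main obstacle is precisely this centering refinement; once it is in place, every other step is a direct substitution into Fano's bound.
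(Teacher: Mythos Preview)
Your proposal is correct and essentially identical to the paper's proof: both pick the ball center $f_0$ (the paper calls it $\theta$) as a reference point, use the information-projection inequality $\tfrac{1}{m}\sum_j d_{KL}(f^j\|\bar f)\le \tfrac{1}{m}\sum_j d_{KL}(f^j\|f_0)$ (the paper cites Wainwright~(15.52) rather than deriving it), apply the KL--$L_2$ upper bound to get $I(X_1;J)\le\tau^2/\alpha$, and substitute into Fano. You correctly identify the centering step as the fix for the typo the paper flags; the only cosmetic difference is that the paper works with an arbitrary $\theta$ and takes the supremum at the end, whereas you choose $f_0$ to (nearly) realize $\Mcal_{\Fcal}^{\loc}(\tau,c)$ up front.
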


\begin{proof}
    According to equation $(15.52)$ in \cite{wainwright2019high} and Lemma \ref{lemma2 in shrotriya}, for an arbitrary density \(\theta \in \Fcal\), consider the maximal packing set \(\{f^1, \dots, f^m\} \subset \Fcal \cap B(\theta, \tau)\) at a $L_2-$distance $\tau/c$. We have
    \[
    I(X_1; J) \leq \frac{1}{m}\sum_{j=1}^m d_{KL}(f^j \| \theta) \leq \max_{j \in [m]} d_{KL} (f^j \| \theta) \leq \max_{j \in [m]} \frac{1}{\alpha} \| f^j - \theta \|_2^2 \leq \frac{\tau^2}{\alpha}.
    \]
    Therefore, by Lemma \ref{Fano's inequality} (Fano's inequality), we have
    \[
    \inf_{\hat{\nu}} \sup_{f} \mathbb{E}_f \| \hat{\nu}(\mathbf{X}) - f \|_2^2 \geq \frac{\tau^2}{4c^2} \left( 1 - \frac{I(\mathbf{X}; J) + \log 2}{\log m} \right) \geq \frac{\tau^2}{4c^2} \left( 1 - \frac{N \tau^2 / \alpha + \log 2}{\log m} \right).
    \]
    Note that by the definition of $\Mcal_{\Fcal}^{\loc}(\tau, c)$, we have $\Mcal_{\Fcal}^{\loc}(\tau, c) := \sup_{f\in \Fcal}\Mcal(\tau/c, \Fcal \cap B(f, \tau))$. When \(\log m > \frac{2N\tau^2}{\alpha} + 2\log 2\), by taking the supremum over \(\theta\) for \(\log m\), we have \(\log \Mcal_{\Fcal}^{\loc}(\tau, c) > \frac{2N\tau^2}{\alpha} + 2\log 2\). Given that \(\theta \in \Fcal\) was chosen arbitrarily, this completes the proof.
\end{proof}

\begin{lemma}
Consider the corruption mechanism \(\mathcal{C}\) in the core problem setting: for a fixed \(\epsilon \in [0, \frac{1}{3}]\), a fraction \(\epsilon\) of the observations in \(\mathbf{\tilde{X}}\) are arbitrarily corrupted by some procedure \(\mathcal{C}\). Let \(\mathbf{X} = \mathcal{C}(\mathbf{\tilde{X}})\), where the \(i\)th coordinate is \(X_i = \mathcal{C}(\tilde{X}_i)\) if it is corrupted; otherwise, \(X_i = \tilde{X}_i\). Suppose $\epsilon \geq \frac{k}{N}$ with a constant $k > 7$, then the following lower bound holds:
$$
\inf_{\hat{f}} \sup_{f \in \Fcal} \sup_{\Ccal} \Ebb_f \left\|\hat{f}\left(\Ccal(\mathbf{\tilde{X}})\right) - f \right\|_2^2 \gtrsim \xi(\epsilon) \wedge d^2,
$$
where \(\xi(\epsilon) := \max_{f_1, f_2 \in \mathcal{F}, \, \TV(P_1, P_2) \leq \frac{\epsilon'}{1 - \epsilon'}} \{\|f_1 - f_2\|_2^2\}\), with $\epsilon' = \epsilon - \frac{1}{N}$, \(f_1\) and \(f_2\) denoting the densities of distributions \(P_1\) and \(P_2\), respectively. Furthermore, \(\xi(\epsilon) \lesssim \epsilon\). And $\xi(\epsilon) \gtrsim \epsilon^2$ when $\epsilon \lesssim \max\{d_\TV, d_{L_2}^2\}$, where \(d_{\TV}\) denotes the diameter of \(\mathcal{F}\) with respect to the total variation (TV) distance and \(d_{L_2}\) denotes the diameter of \(\mathcal{F}\) with respect to the \(L_2\)-norm.
\end{lemma}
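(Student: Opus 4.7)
The plan is to establish the main lower bound via a two-point Le Cam argument, adapted from the Huber-contamination construction of \cite{chen2017robustcovariancescattermatrix} to the present setting with a deterministic corruption budget. For any pair of densities $f_1, f_2 \in \Fcal$ satisfying the TV constraint $\TV(P_1, P_2) \leq \epsilon'/(1 - \epsilon')$, the goal is to exhibit hypothesis-dependent corruption schemes $\Ccal_1, \Ccal_2$ for which the observed distributions $\mathcal{L}_1, \mathcal{L}_2$ have total variation distance bounded away from $1$ by an absolute constant. The two-point Le Cam inequality then yields $\inf_{\hat f} \max_{i \in \{1,2\}} \Ebb_{f_i, \Ccal_i} \|\hat f - f_i\|_2^2 \gtrsim \|f_1 - f_2\|_2^2$, and taking the supremum over admissible pairs produces $\inf \sup \gtrsim \xi(\epsilon)$; since every feasible pair has $\|f_1 - f_2\|_2 \leq d$, the $\wedge d^2$ in the conclusion is automatic.

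The adversaries are built around a common Huber-type mixture $M := (1-\epsilon')P_1 + \epsilon' G_1 = (1-\epsilon')P_2 + \epsilon' G_2$: writing $P_i = R + A_i$ where $R$ has density $\min(f_1, f_2)$ and $A_i := (P_i - P_{3-i})_+$, set $G_1 := H + \tfrac{1-\epsilon'}{\epsilon'} A_2$ and $G_2 := H + \tfrac{1-\epsilon'}{\epsilon'} A_1$, where $H$ is the positive measure of total mass $1 - \tfrac{1-\epsilon'}{\epsilon'}\TV(P_1, P_2)$ (the TV hypothesis is precisely what guarantees this mass is nonnegative). Under $H_i$ the adversary $\Ccal_i$ draws i.i.d.\ labels $L_j \sim \mathrm{Bern}(\epsilon')$ and independent contamination candidates $Y_j \sim G_i$; if $K := \sum_j L_j \leq \epsilon N$ it replaces every $\tilde X_j$ with $L_j = 1$ by $Y_j$, and if $K > \epsilon N$ it falls back to returning $\tilde X$ unmodified (valid since the adversary is permitted to corrupt at most $\epsilon N$ coordinates). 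Because the unconditional distribution of the output of the unconstrained Bernoulli scheme is precisely $M^N$---identical under both hypotheses---a two-sided triangle inequality around $M^N$ yields $\TV(\mathcal{L}_1, \mathcal{L}_2) \leq 2\,\PP(K > \epsilon N)$.

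The main technical step is the small-deviation bound $\PP(K > \epsilon N) < 1/2$. Here the calibration $\epsilon' = \epsilon - 1/N$ is essential: it places $\Ebb[K] = \epsilon N - 1$ exactly one unit below the corruption threshold. Invoking the Kaas--Buhrman bound on the binomial median, which states that $\mathrm{med}(\mathrm{Bin}(N, p))$ lies within one unit of $Np$, gives $\mathrm{med}(K) \leq \epsilon N$, hence $\PP(K \leq \epsilon N) \geq 1/2$. The condition $\epsilon \geq k/N$ with $k > 7$ plays two roles: it guarantees $\epsilon'$ is strictly positive and of the same order as $\epsilon$ (so that the resulting lower bound scales in $\epsilon$ rather than in $1/N$), and it ensures $N\epsilon'$ is large enough for the median bound to be effective. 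I expect this small-deviation control to be the most delicate part of the argument, as Chernoff- and Bernstein-type inequalities are vacuous at sub-standard-deviation scales, so the median-specific bound cannot be replaced by more routine concentration.

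For the two subsidiary claims, the upper bound $\xi(\epsilon) \lesssim \epsilon$ follows from the one-line chain
\[
\|f_1 - f_2\|_2^2 \leq \beta\,\|f_1 - f_2\|_1 = 2\beta\,\TV(P_1, P_2) \leq 2\beta\,\epsilon'/(1-\epsilon') \lesssim \epsilon,
\]
using $|f_1 - f_2| \leq \beta$ pointwise on $\Fcal_B^{[\alpha,\beta]}$ together with $\epsilon \leq 1/3$. For the reverse bound $\xi(\epsilon) \gtrsim \epsilon^2$ when $\epsilon \lesssim \max\{d_{\TV}, d_{L_2}^2\}$, the key tool is the Cauchy--Schwarz inequality $\TV(P_1, P_2) \leq \tfrac12\|f_1 - f_2\|_2$ (valid since $\mu$ is a probability measure), which rearranges to $\|f_1-f_2\|_2^2 \geq 4\,\TV(P_1,P_2)^2$. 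When $\epsilon \lesssim d_{\TV}$, the construction cited from \cite{AkshayPrasadan2024} (Lemma~1.3) furnishes $f_1, f_2 \in \Fcal$ with $\TV(P_1,P_2) \asymp \epsilon$, giving $\xi(\epsilon) \gtrsim \epsilon^2$ directly. Otherwise $\epsilon > d_{\TV}$ while $\epsilon \lesssim d_{L_2}^2$, and every pair in $\Fcal$ automatically satisfies the TV constraint (since $d_{\TV} < \epsilon$ is of the same order as $\epsilon'/(1-\epsilon')$), so $\xi(\epsilon)$ equals the full $L_2$-squared diameter $d_{L_2}^2 \geq \epsilon \geq \epsilon^2$.
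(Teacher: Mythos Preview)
Your overall architecture is the same as the paper's---a Le Cam two-point argument built on the Chen et al.\ Huber-mixture $M$, followed by a coupling between the unconstrained Bernoulli-corruption scheme and an admissible adversary respecting the $\epsilon N$ budget---but the ``main technical step'' you flag is where the argument breaks. You need $\TV(\mathcal{L}_1,\mathcal{L}_2)\le 2\,\PP(K>\epsilon N)$ to be bounded away from $1$ by an \emph{absolute constant}; the Kaas--Buhrman median bound only delivers $\PP(K>\epsilon N)\le \tfrac12$, and this inequality is asymptotically tight. With $\epsilon'=\epsilon-1/N$ you are asking for $\PP\bigl(\mathrm{Bin}(N,\epsilon')-N\epsilon'>1\bigr)$, and since $\mathrm{Var}(K)=N\epsilon'(1-\epsilon')\to\infty$ as $N\epsilon\to\infty$, the normal approximation gives $\PP(K>\epsilon N)\to \tfrac12$. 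Hence $1-\TV(\mathcal{L}_1,\mathcal{L}_2)\to 0$ and Le Cam yields only $\gtrsim \|f_1-f_2\|_2^2\cdot o(1)$, which is vacuous. Chernoff and Bernstein are indeed useless here, but so is the median bound---the calibration $\epsilon'=\epsilon-1/N$ places the threshold only $O(1)$ above the mean, which is simply too close.

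The paper's remedy is to use a Bernoulli parameter $\epsilon''$ with $2\epsilon''<\epsilon$ (so the corruption budget threshold $\epsilon N$ sits at a \emph{multiplicative} factor above $N\epsilon''$), and then apply a Chernoff bound to get $\PP\bigl(\mathrm{Bin}(N,\epsilon'')>2\epsilon''N\bigr)\le\exp(-0.3N\epsilon'')\le\exp(-0.9)$ once $\epsilon''>3/N$; this is exactly where the hypothesis $\epsilon\ge k/N$ with $k>7$ enters, since it creates room for an $\epsilon''\in(3/N,\epsilon/2)$. The resulting truncated mixture then has $\TV$ bounded by $\exp(-0.9)/(1-\exp(-0.9))<1$, which is the absolute-constant separation you need. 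Your subsidiary claims for $\xi(\epsilon)\lesssim\epsilon$ and $\xi(\epsilon)\gtrsim\epsilon^2$ are fine and match the paper's reasoning.
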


\begin{proof}
Given that a portion of the data is corrupted, these data points originate from some arbitrary corrupting process \(q\). We define \(\mathcal{P}\) as the set of all possible joint distributions \(\mathbb{P}_{f,q,\epsilon}\) based on data \(\mathbf{\tilde{X}}\) with a corruption rate \(\epsilon\), where the uncorrupted data follows some density \(f \in \mathcal{F}\) and the corrupted data follows \(q\). Specifically, \(\mathcal{P} := \{\mathbb{P}_{f,q,\epsilon}: f \in \mathcal{F}, q \text{ is any arbitrary corrupting process}, \epsilon \in [0, \frac{1}{3}]\}\). We will find two distributions \(\mathbb{P}_{f_0, q_0, \Bar{\epsilon}}, \mathbb{P}_{f_1, q_1, \Bar{\epsilon}} \in \mathcal{P}\) and use Le Cam's two-point lemma to establish that, for \(\|f_0 - f_1\|_2^2 = \omega(\epsilon)\), \(\inf_{\hat{f}} \sup_{f \in \mathcal{F}} \mathbb{E}_{f} \|\hat{f} - f \|_2^2 \geq \frac{\omega(\epsilon)}{2}\left(1-\|\mathbb{P}_{f_0, q_0, \Bar{\epsilon}} - \mathbb{P}_{f_1, q_1, \Bar{\epsilon}}\|_{\mathrm{TV}}\right)\).

Inspired by the proof in \cite{AkshayPrasadan2024} and similar to the approach in the proof of Theorem 5.1 in \cite{chen2017robustcovariancescattermatrix}, consider two probability measures \(P_1\) and \(P_2\) such that \(\TV(P_1, P_2) \leq \frac{\epsilon'}{1-\epsilon'}\). where $\epsilon' = \epsilon - \frac{1}{N} \leq \frac{1}{3} - \frac{1}{N}$. Recall that by assumption, \(\epsilon \leq \frac{1}{3}\). Note that $\epsilon' < \frac{\epsilon'}{1-\epsilon'} < \frac{3}{2}\epsilon'$. And since $\epsilon \geq \frac{k}{N}$, where $k > 7$, we have $\frac{1}{\epsilon N} \leq \frac{1}{k}$ and $\epsilon > \epsilon' = \epsilon(1-\frac{1}{\epsilon N}) \geq \frac{(k-1)\epsilon}{k}$. Thus, $\frac{\epsilon'}{1-\epsilon'} \asymp \epsilon$.


Since $f_1, f_2 \in \Fcal \subset \Fcal^{[\alpha, \beta]}$, this implies that $|f_1 - f_2| \leq M$ for some constant $M$. Then we have $|\int (f_1 - f_2)^2d\mu| \leq M\int|f_1 - f_2|d\mu$. Therefore, we have $\|f_1 - f_2\|_2^2 \lesssim \TV(f_1, f_2)$,
where \(f_1\) and \(f_2\) are the densities of \(P_1\) and \(P_2\), respectively. 

Denote \[\xi(\epsilon) := \max_{f_1, f_2 \in \mathcal{F}, \, \TV(P_1, P_2) \leq \frac{\epsilon'}{1 - \epsilon'}} \{\|f_1 - f_2\|_2^2\}.\] Thus we have $\xi(\epsilon) \lesssim \epsilon$. 

Another way to show that is by Pinsker's inequality and Lemma \ref{lemma2 in shrotriya}, we have
\[
[\TV(P_1, P_2)]^2 \leq \frac{1}{2} d_{KL}(f_1 \| f_2) \leq \frac{1}{2\alpha} \| f_1 - f_2 \|_2^2,
\]

We also have 
\begin{equation}\label{inequality of TV, Hellinger and L_2}
    \|f_1-f_2\|_2^2 \asymp H^2(f_1,f_2) \lesssim \operatorname{TV}(f_1,f_2) \lesssim H(f_1,f_2) \asymp \|f_1-f_2\|_2,
\end{equation}
where $H(f_1, f_2)$ is the Hellinger distance between $f_1$ and $f_2$. 

Note that if \(\epsilon \lesssim d_{\TV}\), where \(d_{\TV}\) denotes the diameter of \(\mathcal{F}\) with respect to the total variation (TV) distance, we can construct two densities \(f_1\) and \(f_2\) such that \(\TV(f_1, f_2) = \epsilon\). Consequently, \(\xi(\epsilon) \gtrsim \epsilon^2\) since \(\|f_1 - f_2\|_2 \gtrsim \TV(f_1, f_2)\) by~\eqref{inequality of TV, Hellinger and L_2}.

On the other hand, if \(\epsilon \lesssim d_{L_2}^2\), where \(d_{L_2}\) denotes the diameter of \(\mathcal{F}\) with respect to the \(L_2\)-norm, we have \(d_{L_2}^2 \lesssim d_{\TV}\). Therefore, \(\xi(\epsilon) \gtrsim \epsilon^2\) by~\eqref{inequality of TV, Hellinger and L_2}.

Suppose for some \(\epsilon'' \leq \epsilon'\), \(\TV(P_1, P_2) = \frac{\epsilon''}{1-\epsilon''}\). We define the densities \(q_1\) and \(q_2\) by
\[
q_1 = \frac{(f_2 - f_1)\mathbbm{1}(f_2 \geq f_1)}{\TV(P_1, P_2)}, \quad q_2 = \frac{(f_1 - f_2)\mathbbm{1}(f_1 \geq f_2)}{\TV(P_1, P_2)}.
\]
By the proof of Theorem $5.1$ in \cite{chen2017robustcovariancescattermatrix}, we know that \(q_1\) and \(q_2\) are probability densities.

Now, define the measures \(Q_1\) and \(Q_2\) with densities \(q_1\) and \(q_2\), respectively, i.e.,
\[
\frac{\d Q_1}{\d (P_1 + P_2)} = q_1, \quad \frac{\d Q_2}{\d(P_1 + P_2)} = q_2.
\]

Consider two mixture measures \((1-\epsilon'')P_1 + \epsilon''Q_1\) and \((1-\epsilon'')P_2 + \epsilon''Q_2\). We have
\begin{align*}
    \d \left((1-\epsilon'')P_1 + \epsilon''Q_1\right) &= (1-\epsilon'')f_1 + (1-\epsilon'')(f_2 - f_1)\mathbbm{1}(f_2 \geq f_1) \\
    &= (1-\epsilon'')f_1\mathbbm{1}(f_1 \geq f_2) + (1-\epsilon'')f_2\mathbbm{1}(f_2 \geq f_1) \\
    &= (1-\epsilon'')(f_1 - f_2)\mathbbm{1}(f_1 \geq f_2) + (1-\epsilon'')f_2 \\
    &= \d\left((1-\epsilon'')P_2 + \epsilon'' Q_2\right).
\end{align*}

Thus, we have
\[
\TV\left(((1-\epsilon'')P_1 + \epsilon''Q_1)^{\otimes N}, ((1-\epsilon'')P_2 + \epsilon''Q_2)^{\otimes N}\right) = 0.
\]

Now, let's expand the total variation (TV) distance. Define the set family \[\binom{[N]}{s} = \{ \{i_1, \dots, i_s\}: i_1, \dots, i_s \in [N], i_1 < i_2 < \dots < i_s \}.\] Introduce the shorthand notation:
\[
u(\mathbf{X}) := \prod_{i \in [N] \setminus I} f_1(x_i) \prod_{i \in I} q_1(x_i) - \prod_{i \in [N] \setminus I} f_2(x_i) \prod_{i \in I} q_2(x_i).
\]

We have:
\begin{align*}
0 &= \TV\left(((1-\epsilon'')P_1 + \epsilon''Q_1)^{\otimes N}, ((1-\epsilon'')P_2 + \epsilon''Q_2)^{\otimes N}\right) \\
&= \frac{1}{2} \int \left| \prod_{i \in [N]} \left[(1-\epsilon'')f_1(x_i) + \epsilon'' q_1(x_i)\right] - \prod_{i \in [N]} \left[(1-\epsilon'')f_2(x_i) + \epsilon'' q_2(x_i)\right] \right| \prod \d x_i \\
&= \frac{1}{2} \int \left| \sum_{s=0}^N \sum_{I \in \binom{[N]}{s}} (1-\epsilon'')^{N-s} (\epsilon'')^{s} u(\mathbf{X}) \right| \prod \d x_i \\
&\geq \frac{1}{2} \int \left| \sum_{s=0}^{2\epsilon''N} \sum_{I \in \binom{[N]}{s}} (1-\epsilon'')^{N-s} (\epsilon'')^{s} u(\mathbf{X}) \right| \prod \d x_i \\
&\quad - \frac{1}{2} \int \left| \sum_{s=2\epsilon''N+1}^N \sum_{I \in \binom{[N]}{s}} (1-\epsilon'')^{N-s} (\epsilon'')^{s} u(\mathbf{X}) \right| \prod \d x_i \\
&\geq \frac{1}{2} \int \left| \sum_{s=0}^{2\epsilon''N} \sum_{I \in \binom{[N]}{s}} (1-\epsilon'')^{N-s} (\epsilon'')^{s} u(\mathbf{X}) \right| \prod \d x_i - \PP\left(\Bin(N, \epsilon'') > 2\epsilon''N\right) \\
&\geq \frac{1}{2} \int \left| \sum_{s=0}^{2\epsilon''N} \sum_{I \in \binom{[N]}{s}} (1-\epsilon'')^{N-s} (\epsilon'')^{s} u(\mathbf{X}) \right| \prod \d x_i - \exp(-ND(2\epsilon''\|\epsilon'')).
\end{align*}

The last inequality holds by a binomial Chernoff bound from Section 1.3 in \cite{DubhashiPanconesi2009}. Recalling that \(D(q \| p) = q \log \frac{q}{p} + (1-q) \log \frac{1-q}{1-p}\), we have $D(2\epsilon'' \| \epsilon'') = 2\epsilon''\log2 + (1-2\epsilon'')\log(1 - \frac{\epsilon''}{1-\epsilon''})$.

By Taylor's expansion, we have 
\[
\log(1 - x) = - x - \frac{x^2}{2} - \frac{x^3}{3(1 - u)^3},
\] 
where \( u \in (0, x) \) and \( |x| < 1 \). When \( 0 \leq x \leq \frac{1}{2} \), we obtain 
\[
1 + \frac{2x}{3(1 - u)^3} < 1 + \frac{2x}{3(1 - x)^3} \leq \frac{11}{3} =: c_0.
\] 
This implies that 
\[
\frac{x^3}{3(1 - u)^3} \leq \frac{x^2}{2}(c_0 - 1),
\]
i.e., 
\[
- \frac{x^2}{2} - \frac{x^3}{3(1 - u)^3} \geq - \frac{x^2}{2}c_0.
\]
Note that since \( \epsilon'' < \frac{1}{3} \), we have \( 0 \leq \frac{\epsilon''}{1 - \epsilon''} < \frac{1}{2} \). Therefore, 
\[
\log\left(1 - \frac{\epsilon''}{1 - \epsilon''}\right) \geq - \frac{\epsilon''}{1 - \epsilon''} - \frac{1}{2}\left(\frac{\epsilon''}{1 - \epsilon''}\right)^2 c_0.
\]

Denote 
\[
\kappa(\epsilon'') = 2\epsilon'' \log 2 - (1 - 2\epsilon'') \frac{\epsilon''}{1 - \epsilon''} - (1 - 2\epsilon'')/2\left(\frac{\epsilon''}{1 - \epsilon''}\right)^2 \cdot c_0.
\] 
Now we minimize the function \( \frac{\kappa(x)}{x} \) given $x \in [0, 1/3]$. The derivative of this function is 
\[
-\frac{27x-5}{6(x - 1)^3} > 0.
\]
Thus, the minimizer of \( \frac{\kappa(x)}{x} \) occurs at \( x^* = \frac{1}{3} \), and we have 
\[
\frac{\kappa(x^*)}{x^*} = 2 \log 2 - \frac{23}{24} > 0.3.
\] 
This implies that 
\[
D(2\epsilon'' \| \epsilon'') > 0.3 \epsilon''
\]
and 
\[
\PP\left(\Bin(N, \epsilon'') > 2\epsilon'' N\right) \leq \exp(-0.3 N \epsilon'').
\] 
Thus, if we choose \( \epsilon'' > \frac{3}{N} \), we will have 
\[
\PP\left(\Bin(N, \epsilon'') > 2\epsilon'' N\right) \leq \exp(-0.9).
\] 
Furthermore, we require \( 2\epsilon'' < \epsilon \) so that our construction below is within \( \Pcal \). Note that we can always find some $\epsilon''$ such that \( \epsilon / 2 > \epsilon'' > \frac{3}{N} \) since the only constraint is \( \epsilon'' \leq \epsilon' = \epsilon - \frac{1}{N} \) and \( \epsilon \geq \frac{k}{N} \) with \( k > 7 \).

Therefore, we have
\begin{align*}
\mu_{\TV}
&:= \frac{\frac{1}{2} \int \left| \sum_{s=0}^{2\epsilon''N} \sum_{I \in \binom{[N]}{s}} (1-\epsilon'')^s (\epsilon'')^{N-s} u(\mathbf{X})\right| \prod \d x_i}{\PP\left(\Bin(N, \epsilon'') \leq 2\epsilon''N\right)} \\
& \leq \frac{\exp(-0.9)}{1 - \exp(-0.9)} < 1.
\end{align*}

Here, \(\mu_{\TV}\) is the total variation between the mixture measures corrupting less than or equal to \( 2\epsilon'' < \epsilon\) of the observations at random, i.e. \(\PP_{f_1, q_1, 2\epsilon''} \in \Pcal\) and \(\PP_{f_2, q_2, 2\epsilon''} \in \Pcal\)  (with densities \(h_0\) and \(h_1\), respectively). The following two densities are valid probability densities:

\[
h_0(x_1, \ldots, x_N) = \frac{\sum_{s=0}^{2\epsilon''N} \sum_{I \in \binom{[N]}{s}} (1-\epsilon'')^s (\epsilon'')^{N-s} \prod_{i \in I} f_1(x_i) \prod_{i \in [N] \setminus I} q_1(x_i)}{\PP\left(\Bin\left(N, \epsilon''\right) \leq 2\epsilon''N\right)}
\]

\[
h_1(x_1, \ldots, x_N) = \frac{\sum_{s=0}^{2\epsilon''N} \sum_{I \in \binom{[N]}{s}} (1-\epsilon'')^s (\epsilon'')^{N-s} \prod_{i \in I} f_2(x_i) \prod_{i \in [N] \setminus I} q_2(x_i)}{\PP\left(\Bin\left(N, \epsilon''\right) \leq 2\epsilon''N\right)}
\]

Therefore, according to Le Cam's two-point lemma in Chapter 15 of \cite{wainwright2019high}, and since \(\mu_{\TV} \leq \frac{\exp(-0.9)}{1 - \exp(-0.9)} < 1\), we have
\[
\inf_{\hat{f}} \sup_{f \in \Fcal} \Ebb_{f} \|\hat{f} - f \|_2^2 \geq \left(\xi(\epsilon) \wedge d^2\right) \frac{1}{2}(1-\mu_{\TV}) \gtrsim \xi(\epsilon) \wedge d^2.
\]

Thus, we have
$$
\inf_{\hat{f}} \sup_{f \in \Fcal} \sup_{\Ccal} \Ebb_f \left\|\hat{f}\left(\Ccal(\mathbf{\tilde{X}})\right) - f \right\|_2^2 \gtrsim \xi(\epsilon) \wedge d^2.
$$
\end{proof}

\section{Minimax Upper Bound}
\begin{lemma}
        Let $\delta > 0$ be arbitrary but assumed to be bigger than $\sqrt{\epsilon}$ up to absolute constant factors,  and let $X_{1}, \ldots, X_{N}\stackrel{\mbox{\scriptsize i.i.d.}}{\sim} f \in \mathcal{F}$ represent the $N$ observed (corrupted) samples and define $\mathbf{X} := (X_{1}, \ldots, X_{N})^{\top}$.
        For any two densities $g,
        g^{\prime} \in \Fcal$, let 
        $$\psi(g, g^{\prime}, \mathbf{X}) = \mathbbm{1}\left(\sum_{j =1}^{N/k} \mathbbm{1}\left(\sum_{i \in G_j} \log \frac{g(X_i)}{g^{\prime}(X_i)} > 0\right) \geq \frac{N}{2k}\right).$$ 

        We then have
    \begin{equation}
        \sup_{\substack{g, g^{\prime} \in \Fcal \colon \|g - g^{\prime}\|_{2} \geq C\delta, \\
                \|g^{\prime}-f\|_{2} \leq \delta}}\PP(\psi(g, g^{\prime}, \mathbf{X}) = 1)
        \leq
        \exp(-N C_{10}\delta^2),
    \end{equation}
    where
    \begin{align}
        C_{10} 
        & = \frac{L(\alpha, \beta, C)}{4} (\tfrac{1}{2}-\tfrac{\phi}{3}),\\
        C
         & >
        1 + \sqrt{1 / (\alpha c(\alpha,\beta))},
         \\
        L(\alpha, \beta, C)
         & :=
        \frac{\left\{\sqrt{c(\alpha,\beta)} (C-1)  -  \sqrt{1 / \alpha}\right\}^{2}
        }{2( 2 K(\alpha, \beta) +\frac{2}{3} \log \beta / \alpha)},
    \end{align}
    with $K(\alpha, \beta) := \beta / (\alpha^{2} c(\alpha, \beta))$, and
    $c(\alpha, \beta)$ is as defined in Lemma \ref{lemma2 in shrotriya}. In the
    above $\PP$ is taken with respect to the true density function $f$, i.e.,
    $\PP = \PP_f$.
\end{lemma}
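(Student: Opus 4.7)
The plan is to exploit the group structure induced by $k = 1/(3\epsilon)$: since each corrupted observation lies in exactly one group, at most $\epsilon N = (N/k)/3$ of the $N/k$ groups are \emph{contaminated}, and the remaining $\geq (2/3)(N/k)$ \emph{clean} groups consist entirely of i.i.d.\ samples from $f$. For $\psi(g,g',\mathbf{X}) = 1$ to occur, at least $N/(2k)$ groups must have positive group sum, and since contaminated groups contribute at most $N/(3k)$ successes, at least $N/(2k) - N/(3k) = N/(6k)$ \emph{clean} groups must also have positive sum. I would therefore upper-bound $\PP(\psi = 1)$ by the probability that at least $N/(6k)$ of the $\geq 2N/(3k)$ independent clean groups have positive log-likelihood sum, and then combine a single-group Bernstein bound with a binomial tail estimate across groups.

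For a clean group $G_j$, set $Y_i := \log(g(X_i)/g'(X_i))$ for $i \in G_j$, so that the $Y_i$ are i.i.d.\ with $X_i \sim f$. Lemma~\ref{lemma2 in shrotriya} together with the triangle inequality $\|g - f\|_2 \geq \|g-g'\|_2 - \|g'-f\|_2 \geq (C-1)\delta$ yields
\[
-\Ebb_f[Y_i] \;=\; d_{\text{KL}}(f\|g) - d_{\text{KL}}(f\|g') \;\geq\; c(\alpha,\beta)(C-1)^2\delta^2 - \delta^2/\alpha,
\]
and the right-hand side factors as $\bigl\{\sqrt{c(\alpha,\beta)}(C-1) - \sqrt{1/\alpha}\bigr\}\bigl\{\sqrt{c(\alpha,\beta)}(C-1) + \sqrt{1/\alpha}\bigr\}\delta^2$, which is strictly positive by the hypothesis on $C$. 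Combined with the sup-norm bound $|Y_i| \leq \log(\beta/\alpha)$ and a variance estimate of the form $\operatorname{Var}_f(Y_i) \lesssim K(\alpha,\beta)\cdot(-\Ebb_f[Y_i])$ obtained via the Lipschitz property of $\log$ on $[\alpha,\beta]$ followed by Lemma~\ref{lemma2 in shrotriya}, Bernstein's inequality applied to the centered sum with threshold $-k\Ebb_f[Y_i]$ delivers
\[
\PP\Bigl(\textstyle\sum_{i \in G_j} Y_i > 0\Bigr) \;\leq\; \exp\bigl(-L(\alpha,\beta,C)\cdot k\delta^2\bigr),
\]
with $L(\alpha,\beta,C)$ read off as the ratio of the squared drift to the Bernstein denominator $2(2K(\alpha,\beta) + (2/3)\log(\beta/\alpha))$.

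Finally, because distinct clean groups involve disjoint observations, the count of clean groups with positive sum is stochastically dominated by a $\Bin(n,p)$ with $p \leq \exp(-Lk\delta^2)$ and $n \leq 2N/(3k)$. The hypothesis that $\delta$ exceeds $\sqrt{\epsilon}$ up to absolute constants forces $k\delta^2 \gtrsim 1$, keeping $p$ safely below the required success fraction $\tfrac{1}{4}$; a Chernoff tail bound on the event of $\geq N/(6k)$ successes absorbs the combinatorial prefactor $\binom{2N/(3k)}{N/(6k)}$ into a slightly shrunken exponent, yielding the advertised $\PP(\psi=1) \leq \exp(-NC_{10}\delta^2)$ with $C_{10}$ of the stated form, where the factor $(\tfrac{1}{2}-\tfrac{\phi}{3})$ tracks the gap between the required half-fraction of positive groups and the maximal fraction of contaminated groups. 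The main obstacle will be securing a variance bound for $Y_i$ that is sharp enough to reproduce the exact denominator in $L(\alpha,\beta,C)$; once that estimate is in hand, the Bernstein-plus-binomial chain delivers the result by standard constant-tracking bookkeeping.
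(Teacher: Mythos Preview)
Your approach is essentially the paper's: a single-group Bernstein bound (which the paper imports as Lemma~7 of \cite{shrotriya2023revisiting} and you reconstruct) followed by a binomial Chernoff argument across groups after budgeting for the at most $\epsilon N$ contaminated groups. The only bookkeeping differences are that the paper works with hypothetical clean events $A_j$ for \emph{all} $N/k$ groups and introduces a free parameter $\phi>1$ (with $\phi(1-\varrho)>1$), which is where the precise factor $(\tfrac12-\tfrac{\phi}{3})$ in $C_{10}$ comes from; note also that your stochastic-domination step should use $n \le N/k$ rather than $n \le 2N/(3k)$, since the number of clean groups is at \emph{least} $2N/(3k)$ and $\PP(\Bin(n,p)\ge t)$ is increasing in $n$.
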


\begin{proof}
By Lemma 7 of \cite{shrotriya2023revisiting}, we know that for all \(j \in [N/k]\):
\begin{align*}
    \PP\left(\sum_{i \in G_j} \log \frac{g(\tilde{X}_i)}{g'(\tilde{X}_i)} > 0\right) \leq \exp\left(-k L(\alpha, \beta, C) \delta^2\right).
\end{align*}

Given that \(\delta \gtrsim \sqrt{\epsilon}\) for a sufficiently large constant, and recalling that \(k = \frac{1}{3\epsilon}\), the above probability can be made smaller than \(\varrho := \frac{1}{4} \exp\left(-k \frac{L(\alpha, \beta, C)}{2} \delta^2\right)\). Let \(\phi \in \left(0, \frac{3}{2}\right)\) be a constant.

Define \(A_j\) as the event \(\sum_{i \in G_j} \log \frac{g(\tilde{X}_i)}{g'(\tilde{X}_i)} > 0\) and \(B_j\) as the event \(\sum_{i \in G_j} \log \frac{g(X_i)}{g'(X_i)} > 0\). Let \(\tilde{\psi} = 1\) denote the event that at least \(\frac{3}{2}\epsilon N - \epsilon N\phi(1-\varrho)\) of the \(A_j\) events occur, and let \(\psi = 1\) denote the event that at least \(\frac{3}{2} \epsilon N\) of the \(B_j\) events occur. We assume that \(\phi(1-\varrho) > 1\). 

We aim to prove \(\PP(\psi = 1) \leq \PP(\tilde{\psi} = 1)\). If \(\tilde{\psi} = 0\), then no more than \(\frac{3}{2}\epsilon N - \epsilon N\phi(1-\varrho)\) of the \(A_j\) events occur. Then, no more than 
\[
\left(\frac{3}{2}\epsilon N - \epsilon N\phi(1-\varrho) + \epsilon N\right) = \frac{3}{2}\epsilon N + \epsilon N(1 - \phi(1-\varrho)) \leq \frac{3}{2}\epsilon N
\]
of the \(B_j\) events occur, i.e., $\tilde{\psi} = 0$ implies $\psi = 0$. Therefore, \(\PP(\psi = 0) \geq \PP(\tilde{\psi} = 0)\), proving our claim that \(\PP(\psi = 1) \leq \PP(\tilde{\psi} = 1)\). 

Observe that \(\tilde{\psi} = 1\) implies that no more than \(\frac{3}{2}\epsilon N + \epsilon N\phi(1-\varrho)\) of the \(A_j^c\) events occur. Recall that \(\PP(A_j^c) \geq 1 - \varrho\). Set \(p = 1 - \varrho\) and \(\chi = \frac{1}{2} - \varrho - \frac{\phi(1-\varrho)}{3}\), so that \(p - \chi = \frac{1}{2} + \frac{\phi(1-\varrho)}{3}\) and \(p \leq \PP(A_j^c)\).

Following earlier arguments, we have:
\begin{align*}
    \PP(\tilde{\psi} = 1) &\leq \PP\left(\mathrm{Bin}(3\epsilon N, \PP(A_j^c)) \leq \frac{3}{2}\epsilon N + \epsilon N\phi(1-\varrho)\right) \\
    &=  \PP\left(\mathrm{Bin}(3\epsilon N, \PP(A_j^c)) \leq 3\epsilon N(p - \chi)\right) \\
    &\leq  \PP\left(\mathrm{Bin}(3\epsilon N, p) \leq 3\epsilon N(p - \chi)\right) \\
    &\leq \exp\left(-3\epsilon N D(p - \chi \| p)\right),
\end{align*}
where we used Theorem 1(a) in \cite{KlenkeMattner2010} for the stochastic dominance property of binomials and then applied a binomial Chernoff bound from Section 1.3 in \cite{DubhashiPanconesi2009}. 

Recalling that \(D(q \| p) = q \log \frac{q}{p} + (1-q) \log \frac{1-q}{1-p}\), we compute:
\begin{align}
    D(p - \chi \| p) &= D\left(\frac{1}{2} + \frac{\phi(1-\varrho)}{3} \middle\| 1 - \varrho\right) \notag \\
    &= \left(\frac{1}{2} + \frac{\phi(1-\varrho)}{3}\right) \log \frac{\frac{1}{2} + \frac{\phi(1-\varrho)}{3}}{1 - \varrho} + \left(\frac{1}{2} - \frac{\phi(1-\varrho)}{3}\right) \log \frac{\frac{1}{2} - \frac{\phi(1-\varrho)}{3}}{\varrho} \notag \\
    &\geq \left(\frac{1}{2} + \frac{\phi}{3}\right) \log \frac{\frac{1}{2} + \frac{\phi(1-\varrho)}{3}}{1 - \varrho} + \left(\frac{1}{2} - \frac{\phi}{3}\right) \log \frac{\frac{1}{2} - \frac{\phi(1-\varrho)}{3}}{\varrho}  \label{eq:SG:setting_rho_0} \\
    &\geq \left(\frac{1}{2} + \frac{\phi}{3}\right) \log \left(\frac{1}{2} + \frac{\phi}{3}\right) + \left(\frac{1}{2} - \frac{\phi}{3}\right) \log \frac{\frac{1}{2} - \frac{\phi}{3}}{\varrho} \label{eq:SG:deriv_positive} \\
    &= g(\phi) + \left(\frac{1}{2} - \frac{\phi}{3}\right) \log \frac{1}{\varrho}, \notag
\end{align}
where \(g(\phi) = \left(\frac{1}{2} + \frac{\phi}{3}\right) \log \left(\frac{1}{2} + \frac{\phi}{3}\right) + \left(\frac{1}{2} - \frac{\phi}{3}\right) \log \left(\frac{1}{2} - \frac{\phi}{3}\right)\) and \(g(\phi) < 0\). 

To justify inequality \eqref{eq:SG:setting_rho_0}, note that \(6\varrho^2 - 7\varrho + 3 > 0\) always holds, which is equivalent to \(\frac{3 - 3\varrho}{4 - 6\varrho} > \varrho\). Recall that \(\phi \in \left(0, \frac{3}{2}\right)\), \(\varrho \in \left[0, \frac{1}{4}\right)\), and \(\phi(1-\varrho) > 1\). Thus, we have \(\frac{3 - 2\phi}{6 - 2\phi} \in \left(\frac{3 - 3\varrho}{4 - 6\varrho}, 1\right)\). Then \(\varrho < \frac{3 - 2\phi}{6 - 2\phi}\), which implies \(\frac{\frac{1}{2} + \frac{\phi(1-\varrho)}{3}}{1 - \varrho} < 1 < \frac{\frac{1}{2} - \frac{\phi(1-\varrho)}{3}}{\varrho}\). Hence, the first logarithm term is negative while the second logarithm term is positive, leading to our inequality when setting \(\varrho = 0\) in the constants in front of each respective logarithm term. For \eqref{eq:SG:deriv_positive}, we use the fact that \(\frac{d}{d\varrho} \frac{\frac{1}{2} + \frac{\phi(1-\varrho)}{3}}{1 - \varrho} = \frac{1}{2(1 - \varrho)^2} > 0\), so the first logarithm term increases with \(\varrho\). Our bound follows from setting \(\varrho = 0\) inside the first logarithm term and also setting \(\varrho\) in the numerator of the second logarithm term to 0.  

Since we have \(\phi(1-\varrho) > 1\), this is equivalent to \(\log(\varrho) < \log\left(1 - \frac{1}{\phi}\right)\). Recall that \(\delta \gtrsim \sqrt{\epsilon}\) for a sufficiently large constant, so \(\varrho\) can be made smaller than any fixed positive constant. Therefore, we have
\[
\log(\varrho) \leq \min\left\{\log\left(1 - \frac{1}{\phi}\right), \frac{2g(\phi)}{\frac{1}{2} - \frac{\phi}{3}}\right\}.
\]
Thus, the inequality
\[
\left(\frac{1}{2} - \frac{\phi}{3}\right) \log\left(\frac{1}{\varrho}\right) \geq -2g(\phi) > 0
\]
holds, and we have
\[
\begin{aligned}
    D(p - \chi \| p) &\geq \frac{1}{2}\left(\frac{1}{2} - \frac{\phi}{3}\right) \log\left(\frac{1}{\varrho}\right) \\
    &= \frac{L(\alpha, \beta, C)}{4} k \delta^2 \left(\frac{1}{2} - \frac{\phi}{3}\right) + \frac{\log 4}{2}\left(\frac{1}{2} - \frac{\phi}{3}\right) \\
    &\geq \frac{L(\alpha, \beta, C)}{4} k \delta^2 \left(\frac{1}{2} - \frac{\phi}{3}\right).
\end{aligned}
\]

Therefore, noting that \(3\epsilon \cdot k = 1\),
\begin{align*}
     \PP(\tilde{\psi} = 1) &\leq \exp\left(-3\epsilon N \cdot D(p - \chi \| p)\right) \\
     &\leq \exp\left(-3\epsilon N \cdot \frac{L(\alpha, \beta, C)}{4} k \delta^2 \left(\frac{1}{2} - \frac{\phi}{3}\right)\right) \\
     &= \exp(-N C_{10} \delta^2),
\end{align*}
where \(C_{10} = \frac{L(\alpha, \beta, C)}{4} \left(\frac{1}{2} - \frac{\phi}{3}\right)\).
\end{proof}

\begin{lemma} 
Let $\nu_1,\dots,\nu_M$ be a maximal $\delta$-packing (covering) set of $\Fcal'\subseteq \Fcal$ with $f \in \Fcal'$ and $\delta \ge C_1\sqrt{\epsilon}$. Let $i^* \in \argmin_i T_i$, where 
\begin{align}
    T_i = \begin{cases}\max_{j \in E_i} \|\nu_i - \nu_j\|_2, & \text{ if \ } E_i := \left\{j\in [M]: \nu_i \prec \nu_j, \|\nu_i - \nu_j\|_2 \geq C \delta\right\} \text{ is not empty},\\
    0, & \mbox { otherwise}.
    \end{cases}
\end{align} 
Here, we denote $\nu_i \prec \nu_j$ as $\psi(\nu_j, \nu_i, \mathbf{X}) = 1$.

Then 
\begin{align*}
    \PP\left( \|\nu_{i^*} - f\|_2 \geq (C + 1) \delta\right) \leq M \exp\bigg( - C_{10} N \delta^2\bigg).
\end{align*}
We refer to \(\nu_{i^*}\) as the `best' density among \(\nu_1, \dots, \nu_M\).
\end{lemma}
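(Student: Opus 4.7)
The plan is a union-bound argument anchored at the packing point closest to $f$. Since $\{\nu_1, \dots, \nu_M\}$ is a maximal $\delta$-packing (hence a $\delta$-covering) of $\Fcal'$ and $f \in \Fcal'$, there exists an index $i_0$ with $\|\nu_{i_0} - f\|_2 \leq \delta$; fix this $i_0$. I would then define the good event
\[
\mathcal{E} := \bigcap_{\substack{j \in [M] \\ \|\nu_j - \nu_{i_0}\|_2 \geq C\delta}} \bigl\{\psi(\nu_j, \nu_{i_0}, \mathbf{X}) = 0\bigr\}.
\]
Applying Lemma \ref{likelihood upper bound} with $g = \nu_j$, $g' = \nu_{i_0}$ (for which $\|\nu_{i_0} - f\|_2 \leq \delta$, $\|\nu_j - \nu_{i_0}\|_2 \geq C\delta$, and $\delta \geq C_1 \sqrt{\epsilon}$) bounds each failure probability by $\exp(-N C_{10} \delta^2)$, and a union bound over the at most $M$ offending indices yields $\PP(\mathcal{E}^c) \leq M \exp(-N C_{10} \delta^2)$.

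Next I would show that $\|\nu_{i^*} - f\|_2 < (C+1)\delta$ holds on $\mathcal{E}$. By the definition of $\mathcal{E}$, no $j$ with $\|\nu_j - \nu_{i_0}\|_2 \geq C\delta$ satisfies $\nu_{i_0} \prec \nu_j$, so $E_{i_0} = \emptyset$ and $T_{i_0} = 0$; since $T_i \geq 0$ for all $i$ and $i^* \in \argmin_i T_i$, this forces $T_{i^*} = 0$ and hence $E_{i^*} = \emptyset$. Suppose for contradiction that $\|\nu_{i^*} - \nu_{i_0}\|_2 \geq C\delta$. Then on $\mathcal{E}$, $\psi(\nu_{i^*}, \nu_{i_0}, \mathbf{X}) = 0$; because the uncorrupted samples are i.i.d.\ from the absolutely continuous density $f$, the group-wise log-likelihood sums are almost surely nonzero, so the two tallies are complementary and strictly more than $N/(2k)$ groups have negative sums, i.e., $\psi(\nu_{i_0}, \nu_{i^*}, \mathbf{X}) = 1$. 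This gives $\nu_{i^*} \prec \nu_{i_0}$ and hence $i_0 \in E_{i^*}$, a contradiction. Therefore $\|\nu_{i^*} - \nu_{i_0}\|_2 < C\delta$, and the triangle inequality delivers $\|\nu_{i^*} - f\|_2 \leq \|\nu_{i^*} - \nu_{i_0}\|_2 + \|\nu_{i_0} - f\|_2 < (C+1)\delta$.

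Combining the two steps yields the claimed bound $\PP(\|\nu_{i^*} - f\|_2 \geq (C+1)\delta) \leq \PP(\mathcal{E}^c) \leq M \exp(-N C_{10} \delta^2)$. The delicate step is the tie-flipping implication $\psi(\nu_j, \nu_{i_0}) = 0 \Rightarrow \psi(\nu_{i_0}, \nu_j) = 1$, which requires that few groups have exactly vanishing log-likelihood sums. For absolutely continuous $f$ this is automatic for uncorrupted coordinates, and since at most $\epsilon N$ groups contain any corrupted observation, residual ties form a negligible set that can be handled by a small relaxation of the threshold in $\psi$ or absorbed into the implicit constants without affecting the stated rate.
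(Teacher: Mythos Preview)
Your argument is correct and follows essentially the same route as the paper: anchor at the covering point $\nu_{i_0}$ closest to $f$, use a union bound with Lemma~\ref{likelihood upper bound} to control the event that some far-away $\nu_j$ beats $\nu_{i_0}$, and then combine $T_{i^*} \le T_{i_0}$ with the triangle inequality. The paper packages the deterministic step as the one-line observation $\max(T_{i^*},T_{i_0}) \geq \|\nu_{i^*}-\nu_{i_0}\|_2$ whenever $\|\nu_{i^*}-\nu_{i_0}\|_2 \geq C\delta$ (stated ``by definition''), which implicitly uses the same tie-flipping implication you make explicit and flag as delicate.
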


\begin{proof}
    Without loss of generality, let \(\nu_1\) be the closest point to \(f\). Since \(\Fcal'\) is \(\delta\)-packed, we have \(\|\nu_1 - f\|_2 \leq \delta\) (a maximal \(\delta\)-packing set of \(\Fcal'\) is also a \(\delta\)-covering set of \(\Fcal'\)). By definition, \(\max(T_i, T_j) \geq \|\nu_i - \nu_j\|_2\) whenever \(\|\nu_i - \nu_j\|_2 \geq C \delta\). Consequently,
    \begin{align*}
        \mathbbm{1}_{\|\nu_{i^*} - \nu_1\|_2 \geq C \delta} \leq \mathbbm{1}_{\max(T_{i^*}, T_1) \geq C \delta} = \mathbbm{1}_{T_1 \geq C \delta}.
    \end{align*}
    
    Therefore,
    \begin{align*}
        \PP(\|\nu_{i^*} - \nu_1\|_2 \geq C \delta) \leq \PP(T_1 \geq C \delta) \leq M \exp\left( - N C_{10} \delta^2\right),
    \end{align*}
    where the last inequality follows from the union bound and Theorem \ref{likelihood upper bound} (with \(M\) being the cardinality of the packing set), assuming that \(\delta > C_1\sqrt{\epsilon}\), i.e.,
    \begin{align*}
        \PP(T_1 \geq C \delta) 
        &= \PP\left( E_1 := \{\nu_1 \prec \nu_j, \|\nu_1 - \nu_j\|_2 \geq C \delta, j \in [1, M]\} \text{ is not empty}\right)\\
        &= \PP\left( \nu_1 \prec \nu_j, \|\nu_1 - \nu_j\|_2 \geq C \delta \text{ for some } j \in [M] \right)\\
        &\leq \sum_{j=1}^M \PP\left(\nu_1 \prec \nu_j, \|\nu_1 - \nu_j\|_2 \geq C \delta \right) \\
        &\leq \sum_{j=1}^M \PP\left(\nu_1 \prec \nu_j \, \bigg | \|\nu_1 - f\|_2 \leq \delta, \|\nu_1 - \nu_j\|_2 \geq C \delta \right)\\
        &\leq M \exp\left( - N C_{10} \delta^2 \right).
    \end{align*}

    By the triangle inequality, we have
    \begin{align*}
    \PP(\|\nu_{i^*} - f\|_2 \geq (C + 1)\delta) &\leq \PP(\|\nu_{i^*} - \nu_1\|_2 + \|\nu_1 - f\|_2 \geq (C + 1)\delta) \\
    &\leq \PP(\|\nu_{i^*} - \nu_1\|_2 \geq C\delta) \\
    &\leq M \exp\left( - N C_{10} \delta^2\right).
    \end{align*}
\end{proof}

\begin{lemma}\label{lemma:concentration for large delta}
    Let $\tau_J$ be defined as in Theorem \ref{upper bound}. Suppose $\tilde{J}$ satisfies \eqref{Equation:condition on J} and $\delta_{\tilde{J}}^2 := \frac{d^2}{2^{2(\tilde{J}-1)}(C+1)^2} \geq C_1^2 \epsilon$, where $C_1$ is a sufficiently large constant. Then for each such $1 \leq j \leq \tilde{J}$ we have
    $$\PP(A_j) := \PP\left(\| f - \gamma_j \|_2 > \frac{d}{2^{j-1}}\right) \leq 2\exp(-N\tau_j^2/2)\mathbbm{1}(j > 1).$$
\end{lemma}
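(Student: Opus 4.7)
The plan is to proceed by induction on $j$, using the tree structure of Algorithm \ref{alg:sieve_estimator_tree} together with Lemma \ref{concentration likelihood} to control the probability of a bad move at each level.

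The base case $j=1$ is trivial: the indicator $\mathbbm{1}(j>1)$ vanishes, and $\|f-\gamma_1\|_2 \leq d = d/2^{1-1}$ always holds, so $\PP(A_1) = 0$. For the inductive step ($j \geq 2$), I would decompose
\[
\PP(A_j) \leq \PP(A_j \cap A_{j-1}^c) + \PP(A_{j-1}).
\]
On the event $A_{j-1}^c$, we have $f \in \B_2(\gamma_{j-1}, d/2^{j-2}) \cap \Fcal$, and by Lemma \ref{lemma:Upper bound for the cardinality of the offspring} the offspring $\Ocal(\gamma_{j-1})$ form a $d/(2^{j-2}c)$-covering of this set with cardinality at most $\Mcal_\Fcal^{\loc}(d/2^{j-2},2c)$. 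This is exactly the setup of Lemma \ref{concentration likelihood} with packing-covering radius $\delta_j := d/(2^{j-1}(C+1))$, since $c = 2(C+1)$ gives $d/(2^{j-2}c) = \delta_j$ and the failure radius becomes $(C+1)\delta_j = d/2^{j-1}$.

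Next I would check the $\delta \geq C_1\sqrt{\epsilon}$ hypothesis of Lemma \ref{concentration likelihood}: by assumption $\delta_{\tilde J}^2 \geq C_1^2\epsilon$, and since $\delta_j \geq \delta_{\tilde J}$ for $j \leq \tilde J$ the condition propagates. Noting that $\tau_j = \sqrt{C_{10}}\,\delta_j$ and $\tau_j c/\sqrt{C_{10}} = d/2^{j-2}$, Lemma \ref{concentration likelihood} yields
\[
\PP(A_j \cap A_{j-1}^c) \leq \Mcal_\Fcal^{\loc}\bigl(\tau_j c/\sqrt{C_{10}},\,2c\bigr)\exp(-N\tau_j^2).
\]
Because $\tau_J$ is decreasing in $J$ and the local entropy is non-increasing in its radius argument, the defining inequality \eqref{Equation:condition on J} of $\tilde J$ in fact holds at every level $j \leq \tilde J$, so $\log \Mcal_\Fcal^{\loc}(\tau_j c/\sqrt{C_{10}},2c) \leq N\tau_j^2/4$. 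Substituting gives $\PP(A_j \cap A_{j-1}^c) \leq \exp(-3N\tau_j^2/4)$.

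To close the induction, I would combine with the induction hypothesis $\PP(A_{j-1}) \leq 2\exp(-N\tau_{j-1}^2/2)\mathbbm{1}(j-1>1)$ and use $\tau_{j-1}^2 = 4\tau_j^2$. For $j = 2$ the previous term is zero and $\exp(-3N\tau_2^2/4) \leq 2\exp(-N\tau_2^2/2)$. For $j > 2$ it suffices to show
\[
\exp(-3N\tau_j^2/4) + 2\exp(-2N\tau_j^2) \leq 2\exp(-N\tau_j^2/2),
\]
which reduces to verifying the second summand is at most $\exp(-N\tau_j^2/2)$, equivalently $N\tau_j^2 \geq \tfrac{2}{3}\log 2$; this follows from the $N\tau_j^2 > \log 2$ clause in \eqref{Equation:condition on J}. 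The only non-routine step is establishing that \eqref{Equation:condition on J} truly cascades to every $j \leq \tilde J$, which I would dispatch by invoking monotonicity of $\Mcal_\Fcal^{\loc}$ in its first argument and the doubling relation $\tau_{j-1} = 2\tau_j$; the rest is a careful geometric-series bookkeeping that I expect to be straightforward given the rapid decay of $\exp(-4^{j-i}\cdot\text{const}\cdot N\tau_j^2)$ as $i$ decreases.
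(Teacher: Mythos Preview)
Your inductive scheme is essentially the paper's telescoping decomposition $\PP(A_j)\le\sum_{i\le j}\PP(A_i\cap A_{i-1}^c)$ unrolled step by step, and the monotonicity argument that cascades \eqref{Equation:condition on J} down to every level $j\le\tilde J$ is correct. There is, however, one genuine gap in how you bound the increment $\PP(A_j\cap A_{j-1}^c)$ for $j\ge 3$.

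You write that on $A_{j-1}^c$ the offspring $\Ocal(\gamma_{j-1})$ form a $\delta_j$-covering of $\B_2(\gamma_{j-1},d/2^{j-2})\cap\Fcal$ and then apply Lemma \ref{concentration likelihood} directly to obtain a single factor of $\Mcal_\Fcal^{\loc}(\tau_jc/\sqrt{C_{10}},2c)$. But Lemma \ref{concentration likelihood} is stated for a \emph{fixed} packing set $\nu_1,\dots,\nu_M$; here $\gamma_{j-1}$ is data-dependent, so $\Ocal(\gamma_{j-1})$ is a random set and the lemma cannot be invoked as though the packing were deterministic. The paper handles this correctly by first union-bounding over all possible locations $q\in\Lcal(j-1)\cap\B_2(f,d/2^{j-2})$ (whose cardinality is controlled by Lemma \ref{lemma:Upper bound for the cardinality2}), and only then applying Lemma \ref{concentration likelihood} to each fixed $\Ocal(q)$. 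This produces a \emph{squared} local-entropy factor,
\[
\PP(A_j\cap A_{j-1}^c)\le\bigl[\Mcal_\Fcal^{\loc}(\tau_jc/\sqrt{C_{10}},2c)\bigr]^2\exp(-N\tau_j^2),
\]
which is precisely why \eqref{Equation:condition on J} is written with the square.

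The good news is that your induction still closes after the fix: with the squared factor the increment becomes $\exp(-N\tau_j^2/2)$ rather than $\exp(-3N\tau_j^2/4)$, and your own final display then reads $\exp(-N\tau_j^2/2)+2\exp(-2N\tau_j^2)\le 2\exp(-N\tau_j^2/2)$, which holds since $N\tau_j^2>\log 2$. So the repair is to insert the union bound over $q$ via Lemma \ref{lemma:Upper bound for the cardinality2}; once that is done, your inductive argument and the paper's geometric-series summation are equivalent routes to the same conclusion.
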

\begin{proof}
    This proof is inspired by the proof in \cite{AkshayPrasadan2024}. Since we have $\tilde{J}$ satisfying \eqref{Equation:condition on J} and $\delta_{\tilde{J}}^2 := \frac{d^2}{2^{2(\tilde{J}-1)}(C+1)^2} \geq C_1^2 \epsilon$, where $C_1$ is a sufficiently large constant, it is clear that for any $1 \leq j \leq \tilde{J}$, these two conditions are satisfied. For $3 \leq j \leq \tilde{J}$, if $\|\gamma_{j-1} - f\|_2 \leq \frac{d}{2^{j-2}}$, then $\gamma_{j-1} = q$ for some $q \in \Lcal(j-1) \cap \B_2(f, \frac{d}{2^{j-2}})$. Recall the definition of $T_i := T(\delta, \nu_i, \Fcal')$ in Lemma \ref{concentration likelihood}. Let $\delta_j = \frac{d}{2^{j-1}(C+1)}$, we have
    \begin{align*}
        \PP&\left( \|\gamma_j - f\|_2 > \frac{d}{2^{j-1}}, \|\gamma_{j-1} - f\|_2 \leq \frac{d}{2^{j-2}}\right) \\ 
        &\leq \sum_{q \in \Lcal(j-1) \cap B(f, \frac{d}{2^{j-2}})} \PP\left(\|\gamma_j - f\|_2 > \frac{d}{2^{j-1}}, \gamma_{j-1} = q\right)\\
        &= \sum_{q \in \Lcal(j-1) \cap B(f, \frac{d}{2^{j-2}})} \PP\left(\|\argmin_{\nu \in \Ocal(q)}T(\delta_j, \nu, \Ocal(q)) - f\|_2 > (C+1)\delta_j, \gamma_{j-1} = q\right)\\
        &\leq \sum_{q \in \Lcal(j-1) \cap B(f, \frac{d}{2^{j-2}})} \PP\left(\|\argmin_{\nu \in \Ocal(q)}T(\delta_j, \nu, \Ocal(q)) - f\|_2 > (C+1)\delta_j\right)
    \end{align*}
    By using Lemma \ref{lemma:Upper bound for the cardinality2}, we have that $\Lcal(j-1) \cap \B_2(f, \frac{d}{2^{j-2}})$ has cardinality upper bounded by $\Mcal_\Fcal^{\loc}(\frac{d}{2^{j-2}}, 2c)$. Set $\Fcal' = \B_2(q, \frac{d}{2^{j-2}}) \cap \Fcal \subset \Fcal$ and using the result in Lemma \ref{lemma:Upper bound for the cardinality of the offspring} that $\Ocal(q)$ forms a $\frac{d}{2^{j-2}c} = \delta_j$-covering of $\Fcal'$ with cardinality bounded by $\Mcal_{\Fcal}^{\loc}\left(\frac{d}{2^{j-2}}, 2c\right)$. Applying Lemma \ref{concentration likelihood}, we have
    \begin{align*}
        \PP&\left( \|\gamma_j - f\|_2 > \frac{d}{2^{j-1}}, \|\gamma_{j-1} - f\|_2 \leq \frac{d}{2^{j-2}}\right) \\
        &\leq \left[ \Mcal_\Fcal^{\loc}\left(\frac{d}{2^{j-2}}, 2c\right) \right]^2\exp\left(-C_{10}N\delta_j^2\right)\\
        &= \left[ \Mcal_\Fcal^{\loc}\left(\frac{c\tau_j}{\sqrt{C_{10}}}, 2c\right) \right]^2\exp\left(-N\tau_j^2\right).
    \end{align*}
    Here we use the definition that $\tau_j := \frac{\sqrt{C_{10}}d}{2^{j-1}(C+1)} = \frac{\sqrt{C_{10}}d}{2^{j-2}c} = \sqrt{C_{10}}\delta_j$.
    Denote the event $A_j := \{\|\gamma_j - f\|_2 > \frac{d}{2^{j-1}}\}$. Then by using Lemma 3.8 in \cite{AkshayPrasadan2024}, we have for any $1 \leq j \leq \tilde{J}$,
    $$\PP(A_j) \leq \PP(A_1) + \PP(A_2 \cap A_1^c) + \sum_{j = 3}^j\PP(A_j \cap A_{j-1}^c).$$
    Note that $\PP(A_1) = 0$ and we have already bounded $\PP(A_j \cap A_{j-1}^c)$ for $3 \leq j \leq \tilde{J}$. Recall that in the algorithm for the second level, we construct a maximal $\frac{d}{c}$-packing of $B(\nu_1, d) \cap \Fcal$, which is also a $\frac{d}{c}$-covering. Thus by using Lemma \ref{concentration likelihood}, we have
\begin{align*}
        \PP(A_2 \cap A_1^c) &= \PP(A_2) \leq \Mcal_\Fcal^{\loc}(d, c)\exp\left(-C_{10}N\delta_2^2\right)\\
        &\leq \Mcal_\Fcal^{\loc}(d, c)\exp\left(-C_{10}N\delta_2^2\right)\\
        &\leq \left[\Mcal_\Fcal^{\loc}\left(\frac{c\tau_2}{\sqrt{C_{10}}}, 2c\right)\right]^2\exp\left(-C_{10}N\delta_2^2\right)
\end{align*}

Since $\tau_j$ decreases with respect to $j$ and $\Mcal_\Fcal^{\loc}$ is non-increasing with respect to $\tau_j$ by Lemma $1.4$ in \cite{AkshayPrasadan2024}, we can bound $\Mcal_\Fcal^{\loc}(\frac{c\tau_j}{\sqrt{C_{10}}}, 2c)$ with $\Mcal_\Fcal^{\loc}(\frac{c\tau_J}{\sqrt{C_{10}}}, 2c)$ for any $j \leq J$. Thus, for $1 \leq j \leq \tilde{J}$, we have
\begin{align*}
    \PP\left( \|\gamma_j - f\|_2 > \frac{d}{2^{j-1}}\right) &\leq \left[\Mcal_\Fcal^{\loc}\left(\frac{c\tau_j}{\sqrt{C_{10}}}, 2c\right)\right]^2\sum_{j = 2}^j\exp\left(-N\tau_j^2\right)\\
    & \leq \mathbbm{1}(j > 1)\left[\Mcal_\Fcal^{\loc}\left(\frac{c\tau_j}{\sqrt{C_{10}}}, 2c\right)\right]^2\frac{a_j}{1-a_j},
\end{align*}
where $a_j = \exp\left(-N\tau_j^2\right)$. Note that since $\phi < \frac{3}{2}$, we have $a_j < 1$.

Now suppose $N\tau_j^2 > 2\log\left[ \Mcal_{\Fcal}^{\loc}\bigg( \tau_j\frac{c}{\sqrt{C_{10}}}, 2c\bigg)\right]^2 \vee \log 2$, i.e., \eqref{Equation:condition on J} holds. And note that $N\tau_j^2 > \log 2$ implies $a_j < \frac{1}{2}$. Thus we have
\begin{align*}
    \PP\left( \|\gamma_j - f\|_2 > \frac{d}{2^{j-1}}\right) &\leq \mathbbm{1}(j > 1)\exp\left(-\frac{N\tau_j^2}{2}\right)\frac{a_j}{1-a_j}\\
    &\leq 2 \exp\left(-\frac{N\tau_j^2}{2}\right)\mathbbm{1}(j > 1)\\
    &= 2 \exp\left(-\frac{N\tau_j^2}{2}\right)\mathbbm{1}(j > 1).
\end{align*}

\end{proof}

\begin{lemma}\label{lemma:lemma3.10 in prasadan}
    Let $\tau_j$ be defined as in Theorem \ref{upper bound}. Suppose $\tilde{J}$ satisfies \eqref{Equation:condition on J} and also $\frac{d}{2^{\tilde{J}-1}(C+1)} \geq C_1 \sqrt{\epsilon}$ for some sufficient large constant $C_1$. Then if $\nu^*$ denotes the output after at least $\Bar{J}$ iterations, we have
    $$\Ebb_f\|\nu^*-f\|_2^2 \leq \frac{(5c+2)^2}{C_{10}}{\tau_{\tilde{J}}}^2 + \frac{4(5c + 2)^2}{C_{10}} \mathbbm{1}(\Bar{J} > 1) \frac{1}{N}\exp(-N{\tau_{\tilde{J}}}^2/2).$$
\end{lemma}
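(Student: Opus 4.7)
The plan is to decompose the expected error into good-event and bad-event contributions based on the overshoot indicators $A_j := \{\|\gamma_j - f\|_2 > d/2^{j-1}\}$ that Lemma~\ref{lemma:concentration for large delta} controls. Since $\nu^* = \gamma_{\tilde J}$, I would first write
\[
\Ebb_f\|\nu^* - f\|_2^2 = \Ebb_f\bigl[\|\gamma_{\tilde J}-f\|_2^2 \mathbbm{1}(A_{\tilde J}^c)\bigr] + \Ebb_f\bigl[\|\gamma_{\tilde J}-f\|_2^2 \mathbbm{1}(A_{\tilde J})\bigr].
\]
On $A_{\tilde J}^c$ the deterministic estimate $\|\gamma_{\tilde J}-f\|_2^2 \le d^2/2^{2(\tilde J-1)} = (C+1)^2 \tau_{\tilde J}^2 / C_{10}$ is absorbed into the leading $(5c+2)^2 \tau_{\tilde J}^2 / C_{10}$ term, since $c = 2(C+1)$ guarantees $C+1 \le 5c+2$.

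For the bad event, I would partition $A_{\tilde J}$ via the first-overshoot events $B_j := A_j \cap A_{j-1}^c$ for $j = 2, \ldots, \tilde J$; since $\PP(A_1) = 0$, one has $A_{\tilde J} \subseteq \bigsqcup_{j=2}^{\tilde J} B_j$. On $B_j$, the event $A_{j-1}^c$ yields $\|\gamma_{j-1}-f\|_2 \le d/2^{j-2}$, while the construction of the pruned tree forces each subsequent $\gamma_l \in \Ocal(\gamma_{l-1})$ to lie within the post-pruning ball of radius $(1+1/(2c))\,d/2^{l-2}$ around $\gamma_{l-1}$. Telescoping $\|\gamma_{\tilde J}-\gamma_{j-1}\|_2 \le \sum_{l=j}^{\tilde J} \|\gamma_l-\gamma_{l-1}\|_2$ and collapsing the two resulting geometric series (one for the ball radius, one for the pruning slack) gives $\|\gamma_{\tilde J}-f\|_2 \le (5c+2)\,d/(c\,2^{j-2})$ after careful book-keeping; converting via $d/2^{j-2} = c\tau_j/\sqrt{C_{10}}$ then yields $\|\gamma_{\tilde J}-f\|_2^2 \le (5c+2)^2 \tau_j^2 / C_{10}$. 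The chain estimate already inside the proof of Lemma~\ref{lemma:concentration for large delta} controls the probability: $\PP(B_j) \le [\Mcal_\Fcal^{\loc}(c\tau_j/\sqrt{C_{10}}, 2c)]^2 \exp(-N\tau_j^2)$, and \eqref{Equation:condition on J} reduces this to $\PP(B_j) \le \exp(-N\tau_j^2/2)$ for every $j \le \tilde J = \Bar{J}$ (the equality being forced by the maximality of $\Bar{J}$).

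Combining these ingredients yields $\Ebb[\|\gamma_{\tilde J}-f\|_2^2 \mathbbm{1}(A_{\tilde J})] \le \frac{(5c+2)^2}{C_{10}} \sum_{j=2}^{\tilde J} \tau_j^2 \exp(-N\tau_j^2/2)$. Parameterizing $\tau_j^2 = 4^{\tilde J - j} \tau_{\tilde J}^2$ and substituting $k = \tilde J - j$, the sum equals $\tau_{\tilde J}^2 \sum_{k \ge 0} 4^k \exp(-4^k N\tau_{\tilde J}^2/2)$. The ratio of successive terms is $4 \exp(-3 \cdot 4^k N\tau_{\tilde J}^2/2)$, which (using $N\tau_{\tilde J}^2 \ge \log 2$ from \eqref{Equation:condition on J}) stays strictly below one, so the series is bounded by an absolute multiple of its $k=0$ contribution. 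Finally, applying $x e^{-x/2} \le 2/e$ with $x = N\tau_{\tilde J}^2$ to the leading $k = 0$ term converts $\tau_{\tilde J}^2 \exp(-N\tau_{\tilde J}^2/2)$ into a quantity of order $1/N$, producing precisely $\frac{4(5c+2)^2}{C_{10}} \cdot \frac{1}{N}\exp(-N\tau_{\tilde J}^2/2)$ as claimed. The indicator $\mathbbm{1}(\Bar{J} > 1)$ simply records that the sum over $j$ is empty when $\tilde J = \Bar{J} = 1$, in which case the trivial bound $d^2$ is already subsumed by the main term $\frac{(5c+2)^2}{C_{10}} \tau_1^2$.

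The principal obstacle is the exact constant tracking in the telescoping step on $B_j$: one must combine the ball-radius contribution $d/2^{l-2}$ with the pruning slack $d/(2^{l-1}c)$ at every level $l \in \{j, \ldots, \tilde J\}$, sum both geometric series, and fold in the residual $d/2^{j-2}$ from $\|\gamma_{j-1}-f\|_2$ so that the final constant collapses to precisely $5c+2$. A secondary delicate point is converting the combined bound into the exact form $\tfrac{1}{N}\exp(-N\tau_{\tilde J}^2/2)$: a direct use of $x e^{-x/2} \le 2/e$ only gives $\tau_{\tilde J}^2\exp(-N\tau_{\tilde J}^2/2) \le \tfrac{2/e}{N}$, losing one exponential factor, so one must either split on $N\tau_{\tilde J}^2$ or use a more refined inequality (e.g. $x \le 2 e^{x/4}$) in combination with the leftover factor $\exp(-N\tau_{\tilde J}^2/2)$ to preserve the exponential decay in the final tail term while also tracking the coefficient $4$ from the geometric sum.
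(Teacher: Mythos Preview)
Your argument rests on two misreadings of the hypotheses. First, you take $\nu^* = \gamma_{\tilde J}$, but the statement says $\nu^*$ is the output after at least $\Bar J$ iterations, i.e.\ $\nu^* = \gamma_{j}$ for some $j \ge \Bar J$. Second, you assert that ``$\tilde J = \Bar J$ is forced by the maximality of $\Bar J$'', which is backwards: $\Bar J$ is the \emph{largest} integer satisfying \eqref{Equation:condition on J}, so the hypothesis only pins down $\tilde J \le \Bar J$, and the lemma is genuinely applied with $\tilde J < \Bar J$ in the proof of Theorem~\ref{upper bound} (case (2), where $\tilde J = j_0-1$). Because of this, your decomposition $\Ebb\|\gamma_{\tilde J}-f\|_2^2 = \Ebb[\cdot\,\mathbbm{1}(A_{\tilde J}^c)] + \Ebb[\cdot\,\mathbbm{1}(A_{\tilde J})]$ is bounding the wrong random variable, and your first-overshoot partition $A_{\tilde J} \subseteq \bigsqcup_{j=2}^{\tilde J} B_j$ only reaches level $\tilde J$, never touching the additional levels $\tilde J+1,\dots,\Bar J,\dots$ that separate $\gamma_{\tilde J}$ from the actual estimator $\nu^*$. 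The constant $(5c+2)$ in the paper is not produced by your telescoping; it arises precisely because one must add the extra leg $\|\nu^* - \gamma_{\Bar J}\|_2$ on top of the bound for $\|f-\gamma_{\Bar J}\|_2$, via Lemma~3.3 of \cite{AkshayPrasadan2024}.

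Even setting aside the identification error, your route to the tail term is strained: you end up with $\sum_{k\ge 0} 4^k \tau_{\tilde J}^2\exp(-4^k N\tau_{\tilde J}^2/2)$ and then need ad hoc inequalities to manufacture exactly $\tfrac{4}{N}\exp(-N\tau_{\tilde J}^2/2)$. The paper avoids this entirely by working with the tail function rather than a discrete sum: once one has $\PP(\|f-\nu^*\|_2 > \omega x) \le 2\exp(-Nx^2/2)\mathbbm{1}(\Bar J>1)$ for all $x \ge \tau_{\tilde J}$ (with $\omega = (5c+2)/\sqrt{C_{10}}$), the identity $\Ebb\|f-\nu^*\|_2^2 = 2\omega^2\int_0^\infty x\,\PP(\|f-\nu^*\|_2>\omega x)\,\d x$ splits at $\tau_{\tilde J}$, and the upper piece is the exact Gaussian tail integral $\int_{\tau_{\tilde J}}^\infty x\exp(-Nx^2/2)\,\d x = \tfrac{1}{N}\exp(-N\tau_{\tilde J}^2/2)$. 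That is where both the factor $4$ and the clean $\tfrac{1}{N}\exp(-N\tau_{\tilde J}^2/2)$ come from, with no constant-chasing.
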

\begin{proof}
    This proof is inspired by the proof in \cite{AkshayPrasadan2024}.
    
    By Lemma \ref{lemma:concentration for large delta}, we have for $1 \leq j \leq \tilde{J}$ that
    $$\PP(A_j) = \PP\left(\| f - \gamma_j \|_2 > \frac{d}{2^{j-1}}\right) \leq 2\exp(-N\tau_j^2/2)\mathbbm{1}(j > 1),$$
    where event $A_j := \{\|\gamma_j - f\|_2 > \frac{d}{2^{j-1}}\}$.

    Let $\nu^*_{\Bar{J}} := \gamma_{\Bar{J}}$, which is the output of $\Bar{J} - 1$ steps. Define $B_j$ to be the event that $\|f - \nu^*_{\Bar{J}}\|_2 > \eta\tau_j$, where $\eta := \frac{1+3c}{2\sqrt{C_{10}}}$. Since \eqref{Equation:condition on J} holds for $\tilde{J}$, by Lemma $3.3$ in \cite{AkshayPrasadan2024}, we have for any $1 \leq j \leq \tilde{J} \leq \Bar{J}$
    $$\|\gamma_j - \nu^*_{\Bar{J}}\|_2 \leq \frac{d(2+4c)}{c2^j},$$
    where $\tilde{J} \leq \Bar{J}$ always holds by definition. Note that $A_j^c$ implies $B_j^c$, because by triangle inequality we have, if $\|\gamma_j - f\|_2 \leq \frac{d}{2^{j-1}}$:
    $$\| f - \nu_{\Bar{J}}^* \|_2 \leq \| f - \gamma_{j} \|_2 + \| \nu_{\Bar{J}}^* - \gamma_{j} \|_2 \leq \frac{d}{2^{j-1}} + \frac{d(2+4c)}{c2^{j}} = \frac{d}{2^{j-1}}\left(3+\frac{1}{c}\right) = \eta \tau_j.$$

    Thus we have
    $$\PP(B_j) = \PP(\|f - \nu^*_{\Bar{J}}\| > \eta\tau_j) \leq \PP(A_j) \leq 2\exp(-N\tau_j^2/2)\mathbbm{1}(j > 1).$$

    The above inequality also holds for $j \leq 1$ since $\frac{d}{2^{j-1}}$ will be greater than $d$ and both sides will be equals to $0$. Also we have $\mathbbm{1}(j > 1) \leq \mathbbm{1}(\Bar{J} > 1)$.

    Now for $x \in [\tau_{j}, \tau_{j-1})$, where $j \leq \tilde{J}$, which is equivalent to $x \geq \tau_{\tilde{J}}$. And note that $\tau_{j-1} = 2\tau_{j}$, we have that
    \begin{align*}
        \PP(\| f - \nu_{\Bar{J}}^* \|_2 > 2\eta x) & \leq \PP(\| f - \nu_{\Bar{J}}^* \|_2 > \eta\tau_{j-1})\\
        & \leq 2\exp(-N\tau_{j-1}^2/2)\mathbbm{1}(\Bar{J} > 1)\\
        & \leq 2\exp(-Nx^2/2)\mathbbm{1}(\Bar{J} > 1)
    \end{align*}

    Let $\nu^*$ be the output for $j$th step where $j \geq \Bar{J}$. Then by Lemma $3.3$ in \cite{AkshayPrasadan2024} we have for $x \geq \tau_{\tilde{J}}:$
    $$\|\nu^* - \nu^*_{\Bar{J}}\|_2 = \|\gamma_{j+1} - \gamma_{\Bar{J}-1}\|_2 \leq \frac{d(2+4c)}{c2^{\Bar{J}-1}} = \frac{4c+2}{3c+1}\eta\tau_{\Bar{J}} \leq \frac{4c+2}{3c+1}\eta\tau_{\tilde{J}} \leq \frac{4c+2}{3c+1}\eta x.$$

    Then by triangle inequality we have
    $$\| f - \nu^* \|_2 \leq \| f - \nu_{\Bar{J}}^* \|_2 + \| \nu_{\Bar{J}}^* - \nu^* \|_2 \leq \| f - \nu_{\Bar{J}}^* \|_2 + \frac{4c+2}{3c+1}\eta x.$$

    Then for $x \geq \tau_{\tilde{J}}$, let $\omega = \left(2 + \frac{4c+2}{3c+1}\right)\eta = \frac{5c+2}{\sqrt{C_{10}}}$, we have:
    \begin{align*}
        \PP\left(\| f - \nu^* \|_2 > \omega x \right) & \leq  \PP\left(\| f - \nu_{\Bar{J}}^* \|_2 \geq 2\eta x\right)\\
        & \leq 2\exp(-Nx^2/2)\mathbbm{1}(\Bar{J} > 1)
    \end{align*}
    
Thus we have
\begin{align*}
    \mathbb{E}\| f - \nu^* \|_2^2 &= \int_0^\infty 2x \PP(\| f - \nu^*\|_2 > x)\d x \\
    & = 2\omega^2\int_0^\infty x\PP(\| f - \nu^*\|_2 > \omega x)\d x \\
    & \leq  2\omega^2\int_0^{\tau_{\tilde{J}}} x\d x + 2\omega^2\int_{\tau_{\tilde{J}}}^\infty x\PP(\| f - \nu^*\|_2 > \omega x)\d x \\
    & \leq \omega^2{\tau_{\tilde{J}}}^2 + 4\omega^2 \mathbbm{1}(\Bar{J} > 1)\int_{\tau_{\tilde{J}}}^\infty x\exp(-Nx^2/2)\d x \\
    & =  \omega^2{\tau_{\tilde{J}}}^2 + 4\omega^2 \mathbbm{1}(\Bar{J} > 1) \frac{1}{N}\exp(-N{\tau_{\tilde{J}}}^2/2) \\
    & = \frac{(5c+2)^2}{C_{10}}{\tau_{\tilde{J}}}^2 + \frac{4(5c + 2)^2}{C_{10}} \mathbbm{1}(\Bar{J} > 1) \frac{1}{N}\exp(-N{\tau_{\tilde{J}}}^2/2).
\end{align*}

\end{proof}

\begin{theorem}
    Let $\nu^*$ be the output of the multistage sieve MLE which is run for $j$ steps where $j \geq \Bar{J}$. Here $\Bar{J}$ is defined as the maximal integer $J \in \mathbb{N}$, such that $\tau_J := \frac{\sqrt{C_{10}}d}{2^{(J-1)}(C+1)}$

satisfies
\[
N\tau_J^2 > 2\log \left[\Mcal_{\Fcal}^{\loc}\bigg( \tau_J\frac{c}{\sqrt{C_{10}}}, 2c\bigg)\right]^2 \vee \log 2,
\]
or $\Bar{J}=1$ if no such $J$ exists.

Then
\[
\mathbb{E}_f\|\nu^* - f\|_2^2 \lesssim \max\{ \tau_{\Bar{J}}^2, \epsilon \} \wedge d^2.
\]
We remind the reader that $c := 2(C+1)$ is the constant from the definition of local metric entropy, which is assumed to be sufficiently large. Here $C$ is assumed to satisfy (9), and $L(\alpha, \beta, C)$ is defined as per (10) in \cite{shrotriya2023revisiting}.
\end{theorem}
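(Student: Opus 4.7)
The plan is to apply Lemma \ref{lemma:lemma3.10 in prasadan} with an appropriately chosen auxiliary level $\tilde{J}$, and combine the resulting bound with the trivial diameter bound. Since $\nu^* \in \Fcal$ and $d = \operatorname{diam}_2(\Fcal)$, we automatically have $\|\nu^* - f\|_2 \leq d$, so it suffices to show $\Ebb_f\|\nu^* - f\|_2^2 \lesssim \max\{\tau_{\Bar{J}}^2, \epsilon\}$; intersecting with $d^2$ then gives the stated bound. Recall that Lemma \ref{lemma:lemma3.10 in prasadan} requires $\tilde{J}$ to satisfy both condition \eqref{Equation:condition on J} and $\delta_{\tilde{J}} := \frac{d}{2^{\tilde{J}-1}(C+1)} \geq C_1\sqrt{\epsilon}$, and via $\tau_{\tilde{J}}^2 = C_{10}\delta_{\tilde{J}}^2$ the latter becomes $\tau_{\tilde{J}}^2 \geq C_{10}C_1^2\epsilon$. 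I would split the analysis into two cases according to whether $\Bar{J}$ itself meets this $\epsilon$-threshold.

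In \emph{Case 1}, $\tau_{\Bar{J}}^2 \geq C_{10}C_1^2\epsilon$, and I would apply Lemma \ref{lemma:lemma3.10 in prasadan} directly with $\tilde{J} = \Bar{J}$. This yields
\begin{equation*}
\Ebb_f\|\nu^* - f\|_2^2 \leq \frac{(5c+2)^2}{C_{10}}\tau_{\Bar{J}}^2 + \frac{4(5c+2)^2}{C_{10}}\mathbbm{1}(\Bar{J} > 1)\frac{1}{N}\exp(-N\tau_{\Bar{J}}^2/2).
\end{equation*}
Since $N\tau_{\Bar{J}}^2 > \log 2$ by \eqref{Equation:condition on J}, the elementary estimate $\frac{e^{-x/2}}{x} \leq (\sqrt{2}\log 2)^{-1}$ for $x > \log 2$ gives $\frac{1}{N}\exp(-N\tau_{\Bar{J}}^2/2) \lesssim \tau_{\Bar{J}}^2$, and therefore the right-hand side is $\lesssim \tau_{\Bar{J}}^2$.

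In \emph{Case 2}, $\tau_{\Bar{J}}^2 < C_{10}C_1^2\epsilon$, and I would let $\tilde{J}^*$ be the largest integer with $\tau_{\tilde{J}^*}^2 \geq C_{10}C_1^2\epsilon$. If no such integer exists then even $\tau_1^2 < C_{10}C_1^2\epsilon$, which forces $d^2 \lesssim \epsilon$, and the trivial bound alone finishes. Otherwise $\tilde{J}^* < \Bar{J}$, and because $\tau_j$ halves with each increment of $j$, $\tau_{\tilde{J}^*}^2 \in [C_{10}C_1^2\epsilon, 4C_{10}C_1^2\epsilon)$, so $\tau_{\tilde{J}^*}^2 \asymp \epsilon$. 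Monotonicity of both sides of \eqref{Equation:condition on J} in $J$, namely that $N\tau_J^2$ is decreasing in $J$ while $\log\Mcal_\Fcal^{\loc}(\tau_J c/\sqrt{C_{10}}, 2c)$ is increasing in $J$ (via the non-increasing behavior of local entropy in its first argument), implies $\tilde{J}^*$ also satisfies \eqref{Equation:condition on J}, so the lemma applies. Absorbing the exponential remainder exactly as in Case 1 yields $\Ebb_f\|\nu^* - f\|_2^2 \lesssim \tau_{\tilde{J}^*}^2 \asymp \epsilon$.

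Combining the two cases with the trivial $d^2$ bound delivers $\Ebb_f\|\nu^* - f\|_2^2 \lesssim \max\{\tau_{\Bar{J}}^2, \epsilon\} \wedge d^2$, which is the desired conclusion. The main obstacle is essentially bookkeeping: verifying the monotonicity of \eqref{Equation:condition on J} in $J$ so that $\tilde{J}^* \leq \Bar{J}$ inherits the condition, and confirming that the exponential remainder from Lemma \ref{lemma:lemma3.10 in prasadan} is always dominated by $\tau_{\tilde{J}}^2$ via the defining inequality $N\tau_{\tilde{J}}^2 > \log 2$. All genuinely new concentration content was already supplied by Lemmas \ref{likelihood upper bound}, \ref{concentration likelihood}, and \ref{lemma:concentration for large delta}, so no further probabilistic work is needed here.
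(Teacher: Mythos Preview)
Your approach is essentially the paper's: split on whether $\delta_{\Bar J}^2 \ge C_1^2\epsilon$ (equivalently $\tau_{\Bar J}^2 \ge C_{10}C_1^2\epsilon$), and in the failing case drop back to the last level $\tilde J^* = j_0 - 1$ that still clears the $\epsilon$-threshold, then invoke Lemma~\ref{lemma:lemma3.10 in prasadan}. Your Case~2 is in fact tidier than the paper's, which inserts an extra sub-split on $\sqrt{\epsilon}$ versus $C_5/\sqrt N$ to control the remainder $\tfrac1N e^{-N\tau^2/2}$; you get the same control directly from $N\tau_{\tilde J^*}^2 > \log 2$, which is the cleaner route.

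One small edge case is missing: in your Case~1 you apply Lemma~\ref{lemma:lemma3.10 in prasadan} with $\tilde J = \Bar J$ and then use ``$N\tau_{\Bar J}^2 > \log 2$ by \eqref{Equation:condition on J}''. But $\Bar J$ can equal $1$ as the fallback value when \emph{no} $J$ satisfies \eqref{Equation:condition on J}, in which case the lemma's hypothesis fails. The paper disposes of this up front by noting $\tau_1 = \frac{\sqrt{C_{10}}\,d}{C+1} \asymp d$, so the trivial bound $\Ebb_f\|\nu^*-f\|_2^2 \le d^2 \lesssim \tau_{\Bar J}^2$ already suffices. Add that one-line check before entering Case~1 and your argument is complete.
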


\begin{proof}
This proof is inspired by the proof in \cite{AkshayPrasadan2024} and \cite{shrotriya2023revisiting}. If $\Bar{J} = 1$, then $\tau_{\Bar{J}} \asymp d$. Then obviously we have $\mathbb{E}_f\|\nu^* - f\|_2^2 \leq d^2 \asymp \max\{ d^2, \epsilon \} \wedge d^2 \asymp \max\{ \tau_{\Bar{J}}^2, \epsilon \} \wedge d^2$. Thus we assume $\Bar{J} > 1$.

Let $\nu_{\Bar{J}}^* = \gamma_{\Bar{J}}$ be the output of the multistage sieve MLE which is run for $\Bar{J}-1$ steps. 

We assume the initial input is $\gamma_1$ and denote $\gamma_{k+1}$ the output of $k$ iterations of the algorithm. Since $N\tau_j^2$ decreasing with respect to $j$ and $\log \Mcal_{\Fcal}^{\loc}\bigg( \tau_j\frac{c}{\sqrt{C_{10}}}, 2c\bigg)$ non-decreasing with respect to $j$ according to the Lemma $9$ in \cite{shrotriya2023revisiting}, we conclude that $\Bar{J} < \infty$. Denote $A_j := \{ \|f - \gamma_j\|_2 > \frac{d}{2^{j-1}} \}$, for each integer $j \geq 1$.

    $(1)$ Suppose $\delta_J^2 = \frac{d^2}{2^{2(J-1)}(C+1)^2} \geq C_1^2 \epsilon$, where $C_1$ is a sufficiently large constant, for all $J \leq \Bar{J}$.

If one sets $\tau_J := \frac{\sqrt{C_{10}}d}{2^{(J-1)}(C+1)}$, we have that
\begin{equation}\label{thm11 condition}
    N\tau_J^2 > 2\log \left[\Mcal_{\Fcal}^{\loc}\bigg( \tau_J\frac{c}{\sqrt{C_{10}}}, 2c\bigg)\right]^2 \vee \log 2
\end{equation}

Then by Lemma \ref{lemma:lemma3.10 in prasadan} with $\tilde{J} = \Bar{J}$ and $N\tau_{\tilde{J}} \geq \log 2$, we have 

\begin{align*}
    \Ebb_f\|\nu^*-f\|_2^2 &\leq \frac{(5c+2)^2}{C_{10}}{\tau_{\Bar{J}}}^2 + \frac{4(5c + 2)^2}{C_{10}} \mathbbm{1}(\Bar{J} > 1) \frac{1}{N}\exp(-n{\tau_{\Bar{J}}}^2/2) \\
    & \leq \frac{(5c+2)^2}{C_{10}}{\tau_{\Bar{J}}}^2 + \frac{4(5c + 2)^2}{C_{10}} \mathbbm{1}(\Bar{J} > 1) \frac{1}{N}\exp(-\frac{1}{2}\log 2) \\
    & \leq \frac{(5c+2)^2}{C_{10}}{\tau_{\Bar{J}}}^2 + \frac{4(5c + 2)^2}{C_{10}} \mathbbm{1}(\Bar{J} > 1) \frac{{\tau_{\Bar{J}}}^2}{\log 2}\exp(-\frac{1}{2}\log 2)
\end{align*}

Since $d^2$ is always the bound of $\mathbb{E}_f\| f - \nu^* \|_2^2$, we have that $\mathbb{E}_f\| f - \nu^* \|_2^2 \lesssim \tau_{\Bar{J}}^2 \wedge d^2 \leq \max\{ \tau_{\Bar{J}}^2, \epsilon \} \wedge d^2$.

$(2)$ Suppose $\delta_{j_0}^2 = \frac{d^2}{2^{2(j_0-1)}(C+1)^2} < C_1^2 \epsilon$, where $C_1$ is a sufficiently large constant, for some $j_0 \in [1, \Bar{J}]$. Here $\Bar{J}$ is defined the same as the previous part. And suppose $j_0$ is the smallest one satisfies this constraint.

If $j_0 = 1 \text{ or } 2$, then $d \lesssim \sqrt{\epsilon}$. Then $d^2 \lesssim \max\{ \tau_{\Bar{J}}^2, \epsilon \} \wedge d^2$. And obviously, we have $\mathbb{E}_f\| f - \nu_{\Bar{J}}^* \|_2^2 \lesssim d^2$.
Suppose $j_0 > 2$, then for all $ j \in [1, j_0 - 1]$, we have $\delta_{j}^2 = \frac{d^2}{2^{2(j-1)}(C+1)^2} \geq C_1^2 \epsilon$. 

Now by using Lemma \ref{lemma:lemma3.10 in prasadan} with $\tilde{J} = j_0 - 1$, we have

\begin{align*}
    \Ebb_f\|\nu^*-f\|_2^2 \leq \frac{(5c+2)^2}{C_{10}}{\tau_{j_0-1}}^2 + \frac{4(5c + 2)^2}{C_{10}} \mathbbm{1}(\Bar{J} > 1) \frac{1}{N}\exp(-N{\tau_{j_0-1}}^2/2).
\end{align*}

Now, suppose $\sqrt{\epsilon} < \frac{\sqrt{\log 2}}{C_1 \sqrt{NC_{10}}} =: \frac{C_5}{\sqrt{N}}$, we have $$N\tau_{j_0}^2 = n \bigg(\frac{d \sqrt{C_{10}}}{2^{j_0 - 1}(C+1)}\bigg)^2 < N C_1^2 C_{10}\epsilon < \log 2.$$

Since $N\tau_{j}^2$ decreasing w.r.t. $j$ and $j_0 \leq \Bar{J}$, this means that $N\tau_{\Bar{J}}^2 < \log 2$. So this means $\Bar{J} = 1 = j_0$. As we talked about at the beginning, if $j_0 = 1$, then $d \lesssim \sqrt{\epsilon}$. Then $d \lesssim \max\{ \tau_{\Bar{J}}^2, \epsilon \} \wedge d^2$. And obviously, we have $\mathbb{E}\| f - \nu_{\Bar{J}}^* \|_2^2 \lesssim d^2$.

Suppose $\sqrt{\epsilon} \geq \frac{C_5}{\sqrt{N}}$, we have
$\frac{1}{N} \leq \frac{\epsilon}{C_5^2}$. And $\tau_{j_0 - 1}^2 = 4\tau_{j_0}^2 < 4C_1^2\epsilon$. Also according to the definition of $j_0$ we have $N\tau_{j_0 - 1}^2 = \frac{NC_{10}d^2}{2^{2(j_0-2)}(C+1)^2} \geq NC_{10}C_1^2\epsilon \geq \log 2$.

So
$$\mathbb{E}_f\| f - \nu^* \|_2^2 \leq \frac{(5c+2)^2}{C_{10}}4C_1^2\epsilon + \frac{4(5c + 2)^2}{C_{10}} \mathbbm{1}(\Bar{J} > 1) \frac{\epsilon}{C_5^2}\exp(-\frac{\log 2}{2}) \lesssim \epsilon < \max\{ \tau_{\Bar{J}}^2, \epsilon \}.$$

So we have $\mathbb{E}_f\| f - \nu^* \|_2^2 \lesssim \max\{ \tau_{\Bar{J}}^2, \epsilon \} \wedge d^2$.
\end{proof}

\section{Minimax Rate}
\begin{lemma}
    Define $\tau^* := \sup\{\tau : N\tau^2 \leq \log \Mcal_{\Fcal}^{\loc}(\tau, c)\}$, where $c$ in the definition of local metric entropy is a sufficiently large absolute constant. When $\epsilon < \frac{k}{N}$ with a constant $k > 7$, we have 
    $$\max\{{\tau^*}^2 \wedge d^2, \epsilon \wedge d^2\} = {\tau^*}^2 \wedge d^2$$
    up to a constant.
\end{lemma}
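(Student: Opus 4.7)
The plan is to establish the single non-trivial inequality $\epsilon \wedge d^2 \lesssim {\tau^*}^2 \wedge d^2$; the reverse direction is automatic because $\max\{A,B\} \leq A+B \leq 2\max\{A,B\}$ and the maximum already dominates its first argument, so this one-sided bound upgrades to $\max\{{\tau^*}^2 \wedge d^2, \epsilon \wedge d^2\} \asymp {\tau^*}^2 \wedge d^2$ immediately.

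First I would exploit the star-shapedness of $\Fcal$ to produce a universal lower bound on the local entropy at every scale $\tau \leq d/2$. Fix a star-center $f^* \in \Fcal$. Since $d = \operatorname{diam}_2(\Fcal)$, for every $\eta > 0$ there exist $f_1, f_2 \in \Fcal$ with $\|f_1 - f_2\|_2 \geq d - \eta$, and the triangle inequality at $f^*$ forces at least one of them, call it $g$, to satisfy $\|g - f^*\|_2 \geq (d - \eta)/2$; letting $\eta \to 0$ we may assume $\|g - f^*\|_2 \geq d/2$. By star-shapedness the whole segment from $f^*$ to $g$ lies in $\Fcal$, so for any $\tau \leq d/2$ the length-$\tau$ sub-segment starting at $f^*$ lies in $\B_2(f^*, \tau) \cap \Fcal$. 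Placing equally spaced points along this sub-segment with spacing $2\tau/c$ gives a $\tau/c$-separated family of cardinality at least $\lfloor c/2 \rfloor$, hence $\Mcal_{\Fcal}^{\loc}(\tau, c) \geq \lfloor c/2 \rfloor$ for every $0 < \tau \leq d/2$.

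Next I would invoke the hypothesis that $c$ is sufficiently large: choosing $c$ with $\log \lfloor c/2 \rfloor \geq k$, for any $\tau \leq \min\{d/2, \sqrt{k/N}\}$ we obtain $N\tau^2 \leq k \leq \log \Mcal_{\Fcal}^{\loc}(\tau, c)$, which places $\tau$ inside the set whose supremum defines $\tau^*$. Therefore $\tau^* \geq \min\{d/2, \sqrt{k/N}\}$, and squaring gives ${\tau^*}^2 \wedge d^2 \gtrsim \min\{d^2, k/N\}$. Since the standing hypothesis is $\epsilon < k/N$, we conclude $\epsilon \wedge d^2 \leq \min\{d^2, k/N\} \lesssim {\tau^*}^2 \wedge d^2$, as required.

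The main obstacle is the local-entropy lower bound in the first step: everything hinges on leveraging star-shapedness (a single radius directed at a near-diameter point $g$) to guarantee a nontrivial packing at every small scale $\tau \leq d/2$, with a lower bound that can be driven above any prescribed constant $k$ by taking $c$ sufficiently large. The degenerate case $d = 0$ is trivial (both sides vanish), and once the entropy bound is in hand the remaining comparison is essentially bookkeeping with $\min$ and $\max$.
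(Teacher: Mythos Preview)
Your proof is correct and follows essentially the same approach as the paper: both arguments exploit star-shapedness to place $\asymp c$ packing points along a line segment inside $\Fcal$, forcing $\log \Mcal_{\Fcal}^{\loc}(\tau,c)$ to exceed the fixed constant $k$ at every small scale, and hence $\tau^* \gtrsim \sqrt{k/N} \wedge d$. Your version is in fact a bit cleaner than the paper's, which splits into the cases $d > 2\sqrt{k/N}$ and $d \leq 2\sqrt{k/N}$ and leaves the role of star-shapedness implicit; your single bound $\Mcal_{\Fcal}^{\loc}(\tau,c) \geq \lfloor c/2 \rfloor$ for all $\tau \leq d/2$ handles both cases at once.
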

\begin{proof}
Let $\tau = 2\sqrt{\frac{k}{N}}$ with a constant $k > 7$. Note that $\log\Mcal_{\Fcal}^{\loc}(\tau/2, c) \geq \log\Mcal_{\Fcal}^{\loc}(\sqrt{k}, c)$, since the map $\tau \mapsto \log\Mcal_{\Fcal}^{\loc}(\tau, c)$ is non-increasing by Lemma $1.4$ in \cite{AkshayPrasadan2024}. For a sufficiently large constant \( c \), we can ensure that \(\sqrt{k}/c < d\). Note that when choosing \( c \), it only needs to be independent of the data (i.e., \( c \) is selected prior to observing the data).

Suppose $d > \tau$. We can put points in the diameter of the ball with radius $\tau/2 < d/2$ and make $\log\Mcal_{\Fcal}^{\loc}(\sqrt{k}, c) > k$ as long as $c$ big enough. Therefore, we have $\log\Mcal_{\Fcal}^{\loc}(\tau/2, c) \geq \log\Mcal_{\Fcal}^{\loc}(\sqrt{k}, c) > k = N(\tau/2)^2$. By the definition of the supremum, we have ${\tau^*}^2 \geq (\tau/2)^2 = \frac{k}{N}$.

Suppose $d \leq \tau$. We can put points in the diameter of the ball with radius $d/3$ and make $\log\Mcal_{\Fcal}^{\loc}(d/3, c) > k$ as long as $c$ big enough. And note that $N(d/3)^2 < N(\tau/2)^2 = k$. Therefore, by the definition of the supremum, we have ${\tau^*}^2 \geq (d/3)^2$.

Then, we have ${\tau^*}^2 \wedge d^2 \gtrsim \frac{k}{N} \wedge d^2$. Therefore, when $\epsilon < \frac{k}{N}$, we have $\max\{{\tau^*}^2 \wedge d^2, \epsilon \wedge d^2\} = {\tau^*}^2 \wedge d^2$ up to a constant.
\end{proof}

\begin{theorem}
    Define $\tau^* := \sup\{\tau : N\tau^2 \leq \log \Mcal_{\Fcal}^{\loc}(\tau, c)\}$, where $c$ in the definition of local metric entropy is a sufficiently large absolute constant. If Condition \ref{Condition: L_2 and TV} holds, the minimax rate is given by
\[
\max\{{\tau^*}^2 \wedge d^2, \epsilon \wedge d^2\} \text{ up to absolute constant factors.}
\]
\end{theorem}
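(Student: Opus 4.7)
The plan is to assemble the matching lower and upper bounds already established in the paper and reconcile the two quantities $\tau_{\Bar{J}}$ and $\tau^*$ that appear respectively in the upper bound (Theorem \ref{upper bound}) and in the statement of the minimax rate.

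\textbf{Lower bound.} I would split into two cases according to the size of $\epsilon$. If $\epsilon \geq k/N$ with $k>7$, Corollary \ref{corollary for new lower bound} (which invokes Condition \ref{Condition: L_2 and TV} precisely to upgrade $\xi(\epsilon)\wedge d^2$ to $\epsilon\wedge d^2$) yields a lower bound of order $\epsilon\wedge d^2$. Independently, Lemma \ref{lowerbound in shrotriya} applied with $\tau$ taken to be (up to constants) $\tau^*$ yields a lower bound of order ${\tau^*}^2\wedge d^2$; here one needs to check that the defining inequality $\log \Mcal_{\Fcal}^{\loc}(\tau,c) > 2N\tau^2/\alpha + 2\log 2$ can be arranged for $\tau\asymp\tau^*$ by taking $c$ sufficiently large (absorbing the factor $2/\alpha$ and the additive $2\log 2$ into the constant), which is the same device used in the proofs of Shrotriya et al. Taking the maximum gives the claimed lower bound whenever $\epsilon\geq k/N$. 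In the complementary regime $\epsilon<k/N$, Lemma \ref{lemma for minimax rate} says that $\max\{{\tau^*}^2\wedge d^2,\epsilon\wedge d^2\}\asymp {\tau^*}^2\wedge d^2$, so the Fano-based bound alone already matches the target.

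\textbf{Upper bound.} Theorem \ref{upper bound} directly supplies $\mathbb{E}_f\|\nu^*-f\|_2^2 \lesssim \max\{\tau_{\Bar{J}}^2,\epsilon\}\wedge d^2$, so it remains to show that $\tau_{\Bar{J}}^2\wedge d^2 \lesssim {\tau^*}^2\wedge d^2$ (up to absolute constants depending on $c$, $C$, $C_{10}$). I would prove this by contradiction/monotonicity: by definition of $\Bar{J}$, the next index $\Bar{J}+1$ fails the condition in \eqref{Equation:condition on J}, i.e.\ $N\tau_{\Bar{J}+1}^2 \leq 2\log[\Mcal_{\Fcal}^{\loc}(\tau_{\Bar{J}+1}c/\sqrt{C_{10}},2c)]^2 \vee \log 2$. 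Rescaling $\tau\mapsto \tau c/\sqrt{C_{10}}$ and using the monotonicity of $\tau\mapsto \log\Mcal_{\Fcal}^{\loc}(\tau,\cdot)$ (as in Lemma 9 of \cite{shrotriya2023revisiting} / Lemma 1.4 of \cite{AkshayPrasadan2024}), this places $\tau_{\Bar{J}+1}$ below $\tau^*$ up to an absolute constant, and since $\tau_{\Bar{J}}=2\tau_{\Bar{J}+1}$ the same conclusion holds for $\tau_{\Bar{J}}$. The boundary case where no such $J$ exists beyond $1$ corresponds to $\tau_{\Bar{J}}\asymp d$, in which case the $\wedge d^2$ on both sides renders the inequality trivial. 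Combining, $\mathbb{E}_f\|\nu^*-f\|_2^2 \lesssim \max\{{\tau^*}^2,\epsilon\}\wedge d^2 \asymp \max\{{\tau^*}^2\wedge d^2,\,\epsilon\wedge d^2\}$.

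\textbf{Main obstacle.} The only non-routine step is the comparison $\tau_{\Bar{J}}\asymp \tau^*$ up to constants, because $\tau^*$ is defined through $\log\Mcal_{\Fcal}^{\loc}(\tau,c)$ while $\tau_{\Bar{J}}$ is the last element of a dyadic sequence obeying an inequality involving $\log[\Mcal_{\Fcal}^{\loc}(\tau c/\sqrt{C_{10}},2c)]^2$ — an extra factor of $2$ in the log, a rescaling of the radius, and a change of the second argument from $c$ to $2c$. These discrepancies are reconciled by choosing $c$ sufficiently large from the outset and by invoking the monotonicity of the local entropy in both arguments, exactly as in the corresponding step of \cite{shrotriya2023revisiting}. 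Everything else (the case split on $\epsilon$, and the min with $d^2$ at the boundary of the tree) is bookkeeping. Combining the two directions finishes the proof.
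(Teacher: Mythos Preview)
Your plan follows essentially the same route as the paper: combine the Fano lower bound (Lemma \ref{lowerbound in shrotriya}) with the corruption lower bound (Corollary \ref{corollary for new lower bound}), invoke Theorem \ref{upper bound} for the upper bound, and reconcile $\tau_{\Bar{J}}$ with $\tau^*$. Your argument for $\tau_{\Bar{J}}\lesssim\tau^*$ via ``$\Bar{J}+1$ fails \eqref{Equation:condition on J}'' is the contrapositive of the paper's construction of a $\tilde{\tau}\asymp\tau^*$ satisfying \eqref{Equation:condition on J} and then using monotonicity of $\phi(x)=Nx^2-2\log[\Mcal^{\loc}_{\Fcal}(\cdot,c)]^2\vee\log2$; nothing substantive differs there.

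There is, however, one genuine gap. Your claim that in the Fano step the additive $2\log 2$ ``can be absorbed by taking $c$ sufficiently large'' is not correct as stated. Enlarging $c$ increases $\log\Mcal^{\loc}_{\Fcal}(\tau,c)$, but it simultaneously changes $\tau^*$, and the inequality you need is $\log\Mcal^{\loc}_{\Fcal}(\tau,c)>2N\tau^2/\alpha+2\log2$ at a scale $\tau\asymp\tau^*$; when $N{\tau^*}^2$ is itself only of constant order (say $\leq 4\log 2$) there is no room to swallow the additive $2\log 2$ by rescaling $\tau$. The paper handles this with an explicit case split: when $N{\tau^*}^2\leq 4\log2$ it uses the star-shapedness of $\Fcal$ to prove $d\lesssim\tau^*$ (so that ${\tau^*}^2\wedge d^2\asymp d^2$), and then applies Fano with $\tau=d$ --- for which the large-$c$ trick \emph{does} work because $2Nd^2/\alpha$ is now bounded by an absolute constant. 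The same case split is also needed in your upper-bound direction, to cover the sub-case where $\Bar{J}>1$ yet $\Bar{J}+1$ fails \eqref{Equation:condition on J} through the $\log 2$ branch (i.e.\ $N\tau_{\Bar{J}+1}^2\leq\log2$); this is not captured by your ``$\Bar{J}=1$'' boundary discussion and again requires $d\lesssim\tau^*$. Once you insert this $N{\tau^*}^2\lessgtr 4\log 2$ split, your proof is complete and coincides with the paper's.
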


\begin{proof}
When $\tau^* = 0$, by the definition of supremum we have $N\tau^2 > \log \Mcal_{\Fcal}^{\loc}(\tau, c)$ for any $\tau > 0$. This implies $\Mcal_{\Fcal}^{\loc}(\tau, c) = 1$ for sufficient small $\tau$, i.e. $\log2 > N\tau^2 > \log \Mcal_{\Fcal}^{\loc}(\tau, c)$. Then we can conclude that $\Fcal$ only contains one single point, i.e. $d = 0$. Our algorithm will trivially output that point
achieving the minimax rate of $0$. Thus it suffices to consider $\tau^* > 0$. Note that $\tau^* > 0$ implies $d > 0$ and this implies given a fixed constant $u$, we can make $\Mcal_{\Fcal}^{\loc}(u, c)$ greater than any fixed constant as long as $c$ is sufficient large.

Denote a constant $k > 7$.

\textbf{Case 1:} \(N{\tau^*}^2 > 4\log 2\) and $\epsilon \geq \frac{k}{N}$.

Define \(\delta^* := \frac{\tau^*}{\sqrt{4(\frac{1}{\alpha} \vee 1)}}\). In this case, we have:
\begin{align*}
    \log M^{\loc}(\delta^*, c) &\geq \lim_{\lambda \rightarrow 0} \log M_{\mathcal{F}}^{\loc}(\tau^* - \lambda, c) \\
    &\geq N{\tau^*}^2 \\
    &\geq \frac{N{\tau^*}^2}{2} + 2\log 2 \\
    &\geq \frac{2N{\delta^*}^2}{\alpha} + 2\log 2.
\end{align*}

This implies the sufficient condition for the lower bound in Lemma \ref{lowerbound in shrotriya}. Therefore, the minimax rate is lower bounded by \({\delta^*}^2\) up to a constant, and thus lower bounded by \({\tau^*}^2\) up to a constant. Since \(\epsilon \geq \frac{k}{N}\) with a constant $k > 7$ by assumption and $\xi(\epsilon) \gtrsim \epsilon$ by Condition \ref{Condition: L_2 and TV}, by Corollary \ref{corollary for new lower bound} the minimax rate is also lower bounded by \(\epsilon \wedge d^2\). Hence, the minimax rate is lower bounded by \(\max \{ {\tau^*}^2, \epsilon \wedge d^2 \}\). Moreover, as \(\max \{ {\tau^*}^2 \wedge d^2, \epsilon \wedge d^2 \} \leq \max \{ {\tau^*}^2, \epsilon \wedge d^2 \}\), we conclude that \(\max \{ {\tau^*}^2 \wedge d^2, \epsilon \wedge d^2 \}\) is a lower bound for the minimax rate in this case.

According to Theorem \ref{upper bound}, \(\max\{ \tau_{\Bar{J}}^2, \epsilon \} \wedge d^2 = \max\{ \tau_{\Bar{J}}^2 \wedge d^2, \epsilon \wedge d^2 \}\) is the upper bound for the minimax rate. We now aim to find a \(\tilde{\tau} \asymp \tau^*\) such that \(\tilde{\tau}^2 \geq \tau_{\Bar{J}}^2\), which implies
\[
\max\{ \tau_{\Bar{J}}^2 \wedge d^2, \epsilon \wedge d^2 \} \leq \max\{ \tilde{\tau}^2 \wedge d^2, \epsilon \wedge d^2 \}.
\]
Thus, we can conclude that \(\max\{ {\tau^*}^2 \wedge d^2, \epsilon \wedge d^2 \}\) is an upper bound of the minimax rate.

First, we construct \(\tilde{\tau} > 0\) that satisfies \eqref{Equation:condition on J}. Set \(\eta = \min\{1, \sqrt{8(C+1)^2/C_{10}}\}\) and choose a constant \(D\) such that \(D\eta > 1\). Then, let \(\tilde{\tau} = D\sqrt{2}\tau^*\). We then have
\begin{align*}
    N \tilde{\tau}^2 & = \eta^{-2} 2N (D\eta\tau^*)^2 \geq 2N (D\eta\tau^*)^2 > 2 \log M_{\mathcal{F}}^{\loc}(D\eta\tau^*,c) \\
    &\geq 2 \log M_{\mathcal{F}}^{\loc} \bigg( \frac{D\sqrt{8(C+1)^2}\tau^*}{\sqrt{C_{10}}}, c \bigg) = 2 \log M_{\mathcal{F}}^{\loc} \bigg( \frac{2(C+1)\tilde{\tau}}{\sqrt{C_{10}}} , c\bigg).
\end{align*}
The second inequality holds due to the definition of \(\tau^*\). By the assumption in this case, we have
\[
N \tilde{\tau}^2 = 2N D^2{\tau^*}^2 > 8D^2 \log 2 > \log 2.
\]
Thus, \(\tilde{\tau}\) satisfies \eqref{Equation:condition on J}.

Now, we know that the map \(\tau \mapsto \log M_{\mathcal{F}}^{\loc}(\tau, c)\) is non-increasing by Lemma $1.4$ in \cite{AkshayPrasadan2024}. Based on this, we define a non-decreasing map \(\phi: (0, \infty) \rightarrow \mathbb{R}\),
\[
\phi(x) = Nx^2 - 2\log M_{\mathcal{F}}^{\loc}\bigg(\frac{2(C+1)x}{\sqrt{C_{10}}}, c\bigg) \vee \log 2.
\]
We have \(\phi(\tilde{\tau}) > 0\).

Suppose some \(\tau_J\) for \(J \geq 1\) satisfies \eqref{Equation:condition on J}. Then, by the definition of \(\Bar{J}\) in Theorem \ref{upper bound}, we know that \(\tau_{\Bar{J} + 1} = \tau_{\Bar{J}}/2\) and \(\phi(\tau_{\Bar{J} + 1}) < 0 < \phi(\tilde{\tau})\). Thus, we have \(\tilde{\tau} \geq \tau_{\Bar{J}}/2\). If no \(\tau_J\) satisfies \eqref{Equation:condition on J}, then by the definition of \(\Bar{J}\) in Theorem \ref{upper bound}, we know that \(\Bar{J} = 1\). Then we have \(\phi(\tau_{\Bar{J}}) < 0 < \phi(\tilde{\tau})\), so \(\tilde{\tau} \geq \tau_{\Bar{J}}/2\). We have thus found a \(\tilde{\tau} \asymp \tau^*\) such that \(\tilde{\tau}^2 \geq \tau_{\Bar{J}}^2\).

\textbf{Case 2:} \(N{\tau^*}^2 \leq 4\log 2\) and $\epsilon \geq \frac{k}{N}$.

Since \(\tau^*\) is the supremum, we have \(\log M_{\mathcal{F}}^{\loc}(2\tau^*, c) < 4N(\tau^*)^2 \leq 16\log 2\). We now want to prove that \(d\), the diameter of the set \(\mathcal{F}\), cannot be greater than \(4\tau^*\). Suppose \(d > 4\tau^*\). Denote \(B\) as a ball with diameter \(4\tau^*\). Consider a diameter \(l\) in this ball. If we choose \(c\) large enough, we can place more than \(\exp(16\log 2)\) packing points on this line. So, if \(d\) is greater than \(4\tau^*\), we could find such a ball \(B\) such that \(\mathcal{F} \cap B\) contains at least one line segment greater than \(4\tau^*\). By the definition of \(\log M_{\mathcal{F}}^{\loc}\), this implies that \(\log M_{\mathcal{F}}^{\loc}(2\tau^*, c) > 16\log 2\), which contradicts \(\log M_{\mathcal{F}}^{\loc}(2\tau^*, c) < 16\log 2\). Thus, \(d \leq 4\tau^* \leq 8\sqrt{\frac{\log 2}{N}}\).

Let \(\tilde{\tau} = d\). First, we have \(2N\tilde{\tau}^2/\alpha + 2\log 2 < \frac{128\log 2}{\alpha} + 2\log 2\). Now, with a large enough \(c\), we have \(\log M_{\mathcal{F}}^{\loc}(\tilde{\tau}, c) > \frac{128\log 2}{\alpha} + 2\log 2\). Thus, \(\log M_{\mathcal{F}}^{\loc}(\tilde{\tau}, c) > 2N\tilde{\tau}^2/\alpha + 2\log 2\), which satisfies the sufficient condition of Lemma \ref{lowerbound in shrotriya}. So the minimax rate is lower bounded by \(d^2\) up to a constant, and thus lower bounded by \({\tau^*}^2 \wedge d^2\) up to a constant.

Since \(\epsilon \geq \frac{k}{N}\) with a constant $k > 7$ by assumption and $\xi(\epsilon) \gtrsim \epsilon$ by Condition \ref{Condition: L_2 and TV}, using Corollary \ref{corollary for new lower bound}, we conclude that \(\epsilon \wedge d^2\) up to a constant is a lower bound of the minimax rate. Thus, \(\max\{{\tau^*}^2 \wedge d^2, \epsilon \wedge d^2\}\) up to a constant is a lower bound of the minimax rate.

Clearly, \(d^2\) is an upper bound of the minimax rate. Since \(d^2 \leq 16 {\tau^*}^2\), \({\tau^*}^2 \wedge d^2\) is an upper bound up to a constant of the minimax rate. Thus, \(\max\{{\tau^*}^2 \wedge d^2, \epsilon \wedge d^2\}\) up to a constant is an upper bound of the minimax rate.

\textbf{Case 3:} $\epsilon < \frac{k}{N}$.

By Lemma \ref{lemma for minimax rate}, we have $\max\{{\tau^*}^2 \wedge d^2, \epsilon \wedge d^2\} = {\tau^*}^2 \wedge d^2$ up to a constant. Therefore, the result still holds.
\end{proof}

\end{document}